\documentclass[11pt]{article}
\usepackage[margin=1in]{geometry}
\usepackage{graphicx}
\usepackage{amssymb}
\usepackage{mathtools}
\usepackage{amsthm}
\usepackage{float}
\usepackage[export]{adjustbox}
\newcommand{\snapshot}[1]{\adjincludegraphics[width=0.22\linewidth, valign=m,
  trim={{0.16\width} {0.12\height} {0.015\width} {0.09\height}}, clip]{#1}}
\newcommand{\snapshotthreeD}[1]{\adjincludegraphics[width=0.23\linewidth, valign=m,
  trim={{0.01\width} {0.01\height} {0.02\width} {0.13\height}}, clip]{#1}}
\newtheorem{theorem}{Theorem}[section]
\newtheorem{lemma}{Lemma}[section]
\newtheorem{corollary}{Corollary}[section]
\newtheorem{scheme}{Scheme}
\theoremstyle{definition}
\newtheorem{remark}{Remark}[section]
\theoremstyle{plain}
\usepackage{subcaption}
\usepackage{booktabs}
\usepackage{makecell}
\usepackage{cite}
\usepackage{hyperref}

\newcommand{\diff}{\mathrm{d}}
\newcommand{\half}{\frac{1}{2}}
\newcommand{\phibar}{\overline{\phi}}
\numberwithin{equation}{section}
\newcommand{\cM}{\mathcal{M}}
\newcommand{\bx}{\mathbf{x}}
\newcommand{\cL}{\mathcal{L}}
\newcommand{\cA}{\mathcal{A}}
\newcommand{\cG}{\mathcal{G}}
\newcommand{\Csav}{C_{\mathrm{sav}}}

\newcommand{\beq}{\begin{equation}}
\newcommand{\eeq}{\end{equation}}
\newcommand{\bea}{\begin{array}}
\newcommand{\eea}{\end{array}}

\newcommand{\subfig}[3]{%
  \begin{subfigure}[b]{0.42\textwidth}%
    \centering%
    \includegraphics[width=\textwidth]{#1}%
    \caption{#2}\label{#3}%
  \end{subfigure}%
}

\begin{document}
\raggedbottom
\title{Relaxed Lagrange Multiplier (RLM) Schemes for Phase Field Models Preserving the Relaxed Original Energy Dissipation Law}
\author{Xiaobo Jing\thanks{School of Mathematics, Southeast University, Nanjing 210096, Jiangsu Province, P.R. China. Email: \texttt{xiaobo@seu.edu.cn}}
\and Jia Zhao\thanks{Department of Mathematics, University of Alabama, Tuscaloosa, AL 35487, USA. Email: \texttt{jia.zhao@ua.edu} (corresponding author)}}
\date{}
\maketitle

\begin{abstract}
Phase-field models are typically derived from variational principles for a free-energy functional and are widely used to simulate complex multiphase phenomena in science and engineering. A central goal in designing numerical schemes for these models is to preserve the underlying energy-dissipation law. In this paper, we propose a class of relaxed Lagrange multiplier (RLM) schemes for phase field models. In contrast to popular scalar auxiliary variable (SAV) and invariant energy quadratization (IEQ) methods, which dissipate a modified energy involving auxiliary variables, the RLM schemes dissipate a relaxed version of the original energy and closely track the original energy dissipation rate. Compared with the classical Lagrange multiplier (LM) approach, the RLM schemes ensure that the resulting discrete system is uniquely solvable over a broad range of time steps. The key idea is to augment the LM formulation with a relaxation term, yielding a scalar quadratic equation for the multiplier with an explicit closed-form solution. The resulting schemes are linear and efficient because each time step requires solving only two linear systems with constant coefficients, at a cost comparable to that of SAV schemes. We construct both first-order and second-order variants and prove their energy stability. Numerical experiments verify the expected convergence rates and demonstrate that the RLM schemes accurately capture interface dynamics.
\end{abstract}

\medskip
\noindent\textbf{Key words:} Allen--Cahn Equation, Cahn--Hilliard Equation, Relaxed Lagrange Multiplier, Energy Stable, Phase Field
\medskip

\section{Introduction}\label{sec:introduction}

Phase-field methods are widely used to model multiphase problems across many areas of science and engineering. In materials science, they are used to simulate microstructural evolution processes such as solidification, grain growth, and phase transformations~\cite{AC, Cahn2013Free, Karma98, McFadden1993}. In fluid dynamics, they are used to model multiphase flows, including droplet dynamics and bubble formation~\cite{Lowengrub1998Quasi, Anderson1998Diffuse, Liu2003A}. Some other examples include fracture mechanics, biological systems, and pattern formation~\cite{Chen98}.

Phase-field models are usually derived from variational principles for a free-energy functional that governs the system dynamics via an energy-dissipation law~\cite{Furihata2011, McLachlan&Quispel1999}. The core idea of phase-field methods is to introduce a diffuse interface variable whose thickness is controlled by an artificial parameter $\varepsilon$. Instead of explicitly tracking sharp interfaces, we solve PDEs for the phase variable over the entire domain, and the interface is recovered from level sets of the phase variable. Although this formulation is conceptually simple, it presents significant numerical challenges. The small parameter $\varepsilon$ yields stiff systems that require sufficiently fine spatial and temporal resolution, and the nonlinear potential terms should be carefully treated to ensure stability~\cite{Shen2010Numerical}. Moreover, numerical methods should preserve the underlying energy dissipation principle. Schemes that preserve this property are called energy-stable, and those that maintain stability regardless of the time step are termed unconditionally energy-stable.

Several classes of energy-stable numerical schemes have been developed for phase-field models. The convex splitting method~\cite{Eyre1998, Shen2012Second, Elliott1993The} separates the free energy into convex and concave components, treating them implicitly and explicitly, respectively. Stabilization techniques add artificial damping terms to enable explicit treatment of nonlinear terms, yielding linear schemes~\cite{Li2016Characterizing, Wang&YuJSC2018, Xu2006Stability}. Exponential time differencing (ETD) methods provide another class of numerical schemes for phase-field models. These methods treat the linear stiff terms exactly using matrix exponentials while handling the nonlinear terms explicitly or semi-implicitly, enabling large time steps without sacrificing accuracy~\cite{QiaoZhangTang2011mbe, ZhangQiao2012ch_adapt}. Recent advances have established maximum bound principles and energy stability for ETD schemes applied to Allen--Cahn and epitaxial growth models~\cite{DuJuLiQiao2019nac, JuLiQiaoZhang2018etd_mbe, ChengQiaoWang2019etd3, DuJuLiQiao2021mbp}. However, existing approaches in the literature still have limitations. Some of them are linear but require restrictive time-step constraints, while others require the solution of nonlinear systems at each time step, which can be computationally expensive~\cite{Wise2009AN, Hu2009Stable}.

The Invariant Energy Quadratization (IEQ) method~\cite{Yang2017Numerical, Yang2016Linear, Zhao2017Numerical, Zhao2018EQreview, GongEnergy} and the Scalar Auxiliary Variable (SAV) method~\cite{Shen2018The} have recently been developed by introducing an auxiliary variable that transforms the original PDEs into an equivalent system with a quadratic energy structure. This reformulation enables the construction of unconditionally energy-stable linear schemes. These methods have been widely adopted due to their simplicity and broad applicability~\cite{Xiaofeng2017Numerical, Guillen&Tierra2014, Li2017On}.
However, the energy dissipation laws in the SAV and IEQ methods are formulated in terms of auxiliary variables rather than the original phase variables. As truncation errors accumulate in the auxiliary variables during time stepping, the discrete auxiliary variables deviate from their continuous definitions. Consequently, while these schemes satisfy a modified energy dissipation law, they do not guarantee dissipation of the original free energy. This discrepancy has been partially addressed by the relaxed SAV (RSAV) method~\cite{jiang2022improving} and the relaxed IEQ approach~\cite{ZhaoIEQrelax}, both of which use projection techniques to restore consistency.

An alternative approach is the Lagrange multiplier (LM) method~\cite{LM-Cheng}, which introduces a scalar multiplier to enforce the original energy dissipation law directly. This method achieves dissipation of the original energy without requiring the potential to be bounded from below~\cite{EggerNM2019}. However, the Lagrange multiplier approach requires solving a nonlinear algebraic equation for the multiplier at each time step, which may have multiple solutions, complicating the analysis~\cite{Cheng2025_SIAM}. Several variants have been proposed to address this solvability issue~\cite{hou2023efficient,huang2025weighted}.  
The dynamically regularized Lagrange multiplier (DRLM) method introduces regularization parameters to improve solvability and has been applied to incompressible Navier--Stokes, Cahn--Hilliard--Navier--Stokes, and magnetohydrodynamic equations~\cite{doan2025dynamically, doan2025dynamically-1, wang2025efficient}.

Meanwhile, the supplementary variable method (SVM)~\cite{SVM-Gong} provides a general framework for structure-preserving schemes, with the Lagrange multiplier method as a special case. However, SVM still faces solvability challenges. The existence and uniqueness of solutions are not guaranteed, and the solution typically requires solving a nonlinear algebraic equation subject to time-step constraints. By combining the SAV and Lagrange multiplier methods, a weighted SAV method was introduced to address the shortcomings of both~\cite{huang2025weighted}. This approach is applicable to any time-step size, provided the weight coefficients are sufficiently large, thereby yielding a discrete energy that is closer to the original energy.

More broadly, several recent works improve the solvability and energy fidelity of auxiliary-variable and Lagrange multiplier schemes through various relaxation techniques, including a relaxed Lagrange multiplier approach for gradient flows~\cite{liu2024relaxedLM}, linear relaxation schemes for Allen--Cahn-type and Cahn--Hilliard-type models~\cite{jiang2023linearRelax}, a class of unconditionally energy-stable relaxation schemes for gradient flows~\cite{zhang2024relaxGF}, and multi-step SAV approaches that recover the original dissipation law~\cite{chen2025multiSAV}. We emphasize, however, that the term ``relaxation'' carries different meanings across these works, which differ from its meaning in the present method. In the relaxed SAV/IEQ and relaxed Lagrange multiplier methods~\cite{jiang2022improving, ZhaoIEQrelax, liu2024relaxedLM}, relaxation refers to a correction step applied to the auxiliary variable or multiplier after the main solve to restore consistency with the original energy. In the linear relaxation schemes~\cite{jiang2023linearRelax, zhang2024relaxGF}, it refers to introducing an auxiliary relaxation variable governed by its own relaxation dynamics. In the DRLM method~\cite{doan2025dynamically-1}, regularization parameters are introduced to stabilize the multiplier equation.

In summary, energy-stable schemes have progressed from conditionally energy-stable schemes (e.g., the stabilization method) to unconditionally energy-stable schemes (e.g., convex splitting), from schemes that dissipate a modified energy (e.g., the IEQ and SAV methods) to schemes that dissipate the original energy (e.g., the LM method). As the dissipated energy approaches the original energy, however, the resulting algebraic system tends to be harder to solve, often requiring smaller time steps and reducing computational efficiency. In the design of numerical schemes, maintaining mathematical elegance, preserving the physical properties of the PDE, and improving computational efficiency are not mutually exclusive. In schemes that conserve a modified energy, both the energy and the governing equations are reformulated; whereas in schemes that conserve the original energy, the governing equations are also moderately adjusted. From the standpoint of PDE well-posedness and numerical solutions, the stability of the algorithm is precisely a reflection of PDE well-posedness at the discrete level. Once the governing equations are modified during algorithm design, the well-posedness of the PDE must be reassessed. From a modeling perspective, any PDE arises from the desire to maintain mathematical elegance and physical fidelity, and thus, the modeling error may be substantial. If an appropriate modification to the PDE can unify these three objectives—mathematical elegance, physical structure-preserving, and computational efficiency—then such a modification is permissible. Therefore, we argue that, assuming no round-off error, modeling error and algorithmic error must be considered together. In fact, the numerical schemes cited in the paper have all taken modeling error into account, either deliberately or imperceptibly.

In this paper, we propose a class of energy-stable linear schemes for phase-field equations, the Relaxed Lagrange Multiplier (RLM) scheme. The RLM method applies to a general class of phase-field equations, including Allen--Cahn-type and Cahn--Hilliard-type equations as special cases. We make several contributions. First, we introduce a relaxation mechanism that augments the Lagrange multiplier formulation with a relaxation term parameterized by $\alpha \geq 0$. This yields a quadratic algebraic equation for the scaling factor $q^{n+1}$, which admits a closed-form solution. Compared with the LM scheme, the RLM scheme is solvable for a broad range of time step sizes once $\alpha$ is chosen sufficiently large. Moreover, for sufficiently large $\alpha$, the quadratic equation admits exactly one positive root, so the update of the scaling factor is uniquely determined. Second, the RLM scheme dissipates a relaxed original energy $\tilde{E} = E + \alpha (q^2-1)$, where $E$ is the original free energy. For suitable $\alpha$, we have $q \approx 1$, so the relaxation term $\alpha(q^2-1)$ is close to zero and $\tilde{E}$ closely tracks $E$. This differs from SAV/IEQ methods whose modified energies can deviate significantly from $E$. In particular, the formulation tracks the original energy dissipation rate when $q \approx 1$, adding only a controllable relaxation penalty. Third, the scheme is linear and requires solving only two linear systems per time step, with a computational cost comparable to that of SAV methods. We develop both first-order and second-order variants.
These properties make the RLM schemes attractive for long-time simulations of phase-field models while faithfully preserving the energy dissipation law. 
 Table~\ref{tab:method-comparison} summarizes the key differences between SAV, IEQ, RSAV, LM, WSAV, SVM, and the proposed RLM methods.

 \begin{table}[H]
\centering
 \caption{Comparison of energy-stable schemes for phase-field equations.}
 \label{tab:method-comparison}
 \begin{tabular*}{\textwidth}{@{\extracolsep{\fill}}lcccc@{}}
 \toprule
 Method & Energy Preserved & Cost per Step & Solvability & Requirement on $F$ \\
 \midrule
 SAV~\cite{Shen2019SAVreview} & Modified & 2 linear solves & Guaranteed & Bounded below \\
 IEQ~\cite{Yang2017Numerical} & Modified & 1 linear solve & Guaranteed & Bounded below \\
 RSAV~\cite{jiang2022improving} & Modified & 2 linear solves & Guaranteed & Bounded below \\
 LM~\cite{LM-Cheng} & Original & Nonlinear solve & May fail & None \\
 WSAV~\cite{huang2025weighted} & Modified & 2 linear solves & Guaranteed & Bounded below \\
 SVM~\cite{SVM-Gong} & Original & Nonlinear solve & May fail & None \\
 RLM (this work) & Relaxed original & 2 linear solves & Guaranteed & None \\
 \bottomrule
 \end{tabular*}
 \vspace{0.4em}
 \noindent\begin{minipage}{\linewidth}
 \footnotesize\raggedright
 \end{minipage}
 \end{table}

The remainder of this paper is organized as follows. In Section~\ref{sec:background}, we review the SAV and Lagrange multiplier methods for context. In Section~\ref{sec:rlm-methods}, we present the general RLM framework for phase-field models. In Section~\ref{sec:numerical}, we apply the RLM scheme to the Allen--Cahn, Cahn--Hilliard, and conserved Allen-Cahn equations with the double-well potential for binary flow, the Flory--Huggins potential for polymer solutions, and the Lennard--Jones-type potential for liquid thin films, and present numerical experiments demonstrating the convergence rates and comparing the RLM schemes with the SAV method. Finally, we conclude in Section~\ref{sec:conclusion}.

\section{Background and Preliminaries}\label{sec:background}

\subsection{General phase-field model framework}
We begin by establishing notation for general phase-field models. Let $\Omega \subset \mathbb{R}^d$ be a bounded domain. For $f, g \in L^2(\Omega)$, we define the standard inner product and norm as
\[
(f, g) = \int_\Omega f(\bx) g(\bx) \,\diff\bx, \qquad \|f\| = \sqrt{(f, f)}.
\]
Given a linear, self-adjoint, non-negative operator $\cA$, we denote the induced inner product and semi-norm by
\[
(f, g)_\cA = (\cA f, g), \qquad \| f\|_\cA = \sqrt{(f,f)_\cA}.
\]
The self-adjointness of $\cA$ ensures $(f,g)_\cA = (g,f)_\cA$, so $(\cdot,\cdot)_\cA$ is a symmetric nonnegative bilinear form with associated semi-norm $\|\cdot\|_\cA$. We use this notation with $\cA$ taken to be the operators $\cL$, $\cL_0$, and $\cM$ introduced below.

We consider a general free energy functional of the form
\beq \label{eq:general-free-energy}
E(\phi) = \half \|\phi\|_\cL^2 + \int_\Omega F(\phi) \,\diff\bx,
\eeq
where $\cL$ is a linear self-adjoint elliptic operator (e.g., $\cL = -\varepsilon^2 \Delta$) and $F(\phi)$ is a nonlinear potential function. We denote $f(\phi)\coloneqq F'(\phi)$ as the derivative of the potential. The phase-field evolution equation associated with the free energy~\eqref{eq:general-free-energy} is
\beq \label{eq:general-gradient-flow}
\left\{
\begin{aligned}
\partial_t \phi &= - \cM \mu, \\
\mu &\coloneqq\frac{\delta E}{\delta \phi} = \cL \phi + f(\phi),
\end{aligned}
\right.
\eeq
where $\mu$ is the chemical potential and $\cM \geq 0$ is the mobility operator. Two canonical choices of the mobility operator $\cM$ recover the Allen--Cahn (AC) type and Cahn--Hilliard (CH) type dynamics. For the AC type, we take $\cM = M I$ with $M>0$ a constant and $I$ the identity operator, which yields
\beq
\partial_t \phi = -M (\cL \phi + f(\phi)), \label{eq:AC-type-phi}
\eeq
For the CH type, we take $\cM = -M\Delta$ with mobility constant $M > 0$, which can be equivalently written as
\beq
\partial_t \phi = M \Delta (\cL \phi + f(\phi)). \label{eq:CH-type-phi}
\eeq
Moreover, under homogeneous Neumann or periodic boundary conditions for $\mu$, the CH-type system conserves mass,
\begin{equation}
\frac{d}{dt} \int_\Omega \phi \,\diff\bx = 0. \label{eq:CH-type-mass-conservation}
\end{equation}
This follows from integrating~\eqref{eq:CH-type-phi} over $\Omega$ and applying the divergence theorem with the boundary condition.

A key property of the phase-field equation~\eqref{eq:general-gradient-flow} associated with the energy~\eqref{eq:general-free-energy} is the energy dissipation law
\begin{equation} \label{eq:general-gradient-flow-energy-law}
\frac{d}{dt} E(\phi) = \left(\frac{\delta E}{\delta \phi}, \partial_t \phi\right) = -(\mu, \cM \mu) = -\|\mu\|_\cM^2 \leq 0.
\end{equation}
In other words, the free energy decreases monotonically over time, a fundamental thermodynamic principle that numerical schemes should preserve in phase-field models. 

The identity in Eq.~\eqref{eq:general-gradient-flow-energy-law} relies on the self-adjointness of $\cL$ and $\cM$ in $L^2(\Omega)$ under the chosen boundary conditions.
For simplicity, we assume homogeneous Neumann boundary conditions throughout this paper, though the approach extends naturally to other boundary conditions. We assume that $\cL$ and $\cM$ are linear, self-adjoint, and non-negative with respect to the $L^2$ inner product, with boundary conditions chosen so that all integration-by-parts identities used in the energy estimates are valid. In particular, for $\cL = -\varepsilon^2\Delta$ we impose the homogeneous Neumann condition $\mathbf{n}\cdot\nabla\phi = 0$ on $\partial\Omega$, and for the CH-type mobility $\cM = -M\Delta$ we additionally impose $\mathbf{n}\cdot\nabla\mu = 0$ on $\partial\Omega$. Periodic boundary conditions work equally well.

\paragraph{Notation for time discretization.}
To make the later explanation clearer and more consistent, we introduce some notation for numerical discretization. Consider the time interval $[0, T]$ and discretize it into equally spaced points $0=t_0 < t_1 <\cdots < t_N=T$ with step size $\Delta t = T/N$, so that $t_n = n\Delta t$. The numerical solution for a function $\phi(\bx, t)$ at time $t_n$ is denoted by $\phi^{n}(\bx)$. When there is no ambiguity, we write simply $\phi^{n}$ to represent $\phi^{n}(\bx)$.

We use the following extrapolation notation. For functions $\phi(\bx, t)$ and $q(t)$, we have
\begin{align}
& \phi^{n+\half} = \frac{1}{2}(\phi^{n} + \phi^{n+1}), \quad \phibar^{n+\half} = \frac{3}{2}\phi^{n} - \frac{1}{2}\phi^{n-1}, \quad \phibar^{\,n+1}=2\phi^{n}-\phi^{n-1}, \\
& q^{n+\half} = \frac{1}{2}(q^{n} + q^{n+1}).
\end{align}
Whenever an extrapolation involves values at earlier time levels (e.g., $\phi^{n-1}$), we assume the required starting values are available and set $q^0 = 1$, consistent with $q(0) = 1$.

\subsection{Review of the SAV method}
We briefly review the classical SAV method~\cite{Shen2018The, Shen2019SAVreview}. The key is to introduce auxiliary variables and reformulate the original problem in~\eqref{eq:general-gradient-flow} into an equivalent PDE with a quadratic energy structure.
For the classical SAV method, we introduce the scalar auxiliary variable
\beq \label{eq:sav-q}
q (t) = H(\phi(\bx,t))\coloneqq\sqrt{\int_\Omega \Big( F(\phi) - \frac{1}{2}\gamma_0\phi^2+ \Csav \Big) \,\diff\bx},
\eeq 
where $\gamma_0$ is a regularization parameter~\cite{ChenZhaoYang2018} and $\Csav>0$ is chosen so that $q(t)$ is well-defined, i.e., so that the quantity under the square root remains strictly positive for all relevant states $\phi$. Otherwise, $h(\phi)$ below becomes singular. Denote
\[
\cL_0 = \cL + \gamma_0 I, \quad h(\phi)\coloneqq \frac{\delta H}{\delta \phi}= \frac{F'(\phi)-\gamma_0\phi}{2H(\phi)}.
\]
Then Eq.~\eqref{eq:general-gradient-flow} is reformulated into an equivalent system
\beq \label{eq:SAV-general-gradient-flow}
\left\{
\begin{aligned}
&\partial_t \phi = -\cM \mu, \\
&\mu = \cL_0 \phi + 2h(\phi) q(t), \\
&\frac{d q(t)}{dt} =  \int_\Omega h(\phi) \partial_t \phi \,\diff\bx,
\end{aligned}
\right.
\eeq 
with a consistent initial condition for the auxiliary variable $q(0) = H(\phi(\bx,0))$. Here $h(\phi)=\delta H/\delta\phi$ is the variational derivative of the functional $H[\phi]=\sqrt{\int_\Omega(F(\phi)-\frac12\gamma_0\phi^2+\Csav)\,\diff\bx}$, and the evolution of $q$ in~\eqref{eq:SAV-general-gradient-flow} follows from the chain rule $\frac{dq}{dt}=\frac{dH}{dt}=\int_\Omega \frac{\delta H}{\delta\phi}\,\partial_t\phi\,\diff\bx$.
If we define the reformulated free energy as
\beq \label{eq:SAV-general-free-energy}
\hat E(\phi, q) = \half \|\phi\|_{\cL_0}^2 + q^2 - \Csav|\Omega|,
\eeq
then the energy dissipation law for Eq.~\eqref{eq:SAV-general-gradient-flow} is
\beq \label{eq:SAV-general-gradient-flow-energy-law}
\frac{d}{dt} \hat E(\phi, q)  = -\|\cL_0 \phi +2h(\phi)q(t)\|_\cM^2 \leq 0.
\eeq
Here $|\Omega|$ denotes the measure of $\Omega$. In particular, $\hat E(\phi, q) = E(\phi)$ when $q = H(\phi)$.

At the continuous level, $q(t) = H(\phi)$. The reformulated phase-field model in Eq.~\eqref{eq:SAV-general-gradient-flow} is equivalent to the original phase-field model in Eq.~\eqref{eq:general-gradient-flow}, and the modified energy law in Eq.~\eqref{eq:SAV-general-gradient-flow-energy-law} is equivalent to the original energy law in Eq.~\eqref{eq:general-gradient-flow-energy-law} when $q(t) = H(\phi)$. Thus, numerical schemes designed for Eq.~\eqref{eq:SAV-general-gradient-flow} are well-suited for solving Eq.~\eqref{eq:general-gradient-flow}. If we apply the backward difference formula (BDF) to the time derivative with a semi-implicit time discretization, we obtain the first-order SAV-BDF scheme as follows.

\begin{scheme}[SAV-BDF1 Scheme]\label{sch:SAV-BDF}
\beq\label{eq:SAV-BDF}
\left\{
\begin{aligned}
&\frac{\phi^{n+1} - \phi^n}{\Delta t}= -\cM \mu^{n+1}, \\
&\mu^{n+1} =\cL_0 \phi ^{n+1} +2h(\phi^n)q^{n+1} ,\\
&q^{n+1} - q^n = \int_\Omega h(\phi^n) (\phi^{n+1} - \phi^n) \,\diff\bx.
\end{aligned}
\right.
\eeq
\end{scheme}

Meanwhile, if we apply the Crank--Nicolson temporal discretization, we obtain the second-order SAV-CN scheme as follows.
\begin{scheme}[SAV-CN Scheme] \label{sch:SAV-CN}
\beq\label{eq:SAV-CN}
\left\{
\begin{aligned}
&\frac{\phi^{n+1} - \phi^n}{\Delta t} = -\cM \mu^{n+\half}, \\
&\mu^{n+\half} =\cL_0 \phi ^{n+\half} + 2h(\phibar^{n+\half}) q^{n+\half},\\
&q^{n+1} - q^n=\int_\Omega h(\phibar^{n+\half}) (\phi^{n+1} - \phi^n) \,\diff\bx.
\end{aligned}
\right.
\eeq
\end{scheme}
Scheme~\ref{sch:SAV-BDF} and Scheme~\ref{sch:SAV-CN} are both unconditionally energy-stable in the sense that 
$\hat E(\phi^{n+1}, q^{n+1})-\hat E(\phi^n, q^n)\leq0$.

\subsection{Review of the Lagrange Multiplier Method}
We briefly revisit the Lagrange multiplier (LM) framework for phase field models~\cite{LM-Cheng}. For the general phase-field equation in Eq.~\eqref{eq:general-gradient-flow}, the LM method introduces a scalar multiplier $q(t)$ to enforce dissipation of the original energy. The reformulated system reads
\beq \label{eq:LM-continuous}
\left\{
\begin{aligned}
& \partial_t\phi = -\cM \mu, \\
& \mu = \cL\phi + q(t) f(\phi), \quad f(\phi) = F'(\phi), \\
& \frac{d}{dt}\int_\Omega F(\phi)\,\diff\bx = q(t)\int_\Omega f(\phi)\,\partial_t\phi\,\diff\bx,
\end{aligned}
\right.
\eeq
with the consistent initial condition $q(0)=1$. Taking the $L^2$ inner product of the first relation in~\eqref{eq:LM-continuous} with $\mu$ and using the last identity yields
\[
\frac{dE}{dt}=-\|\mu\|_\cM^2 = - \|\cL\phi + q(t) f(\phi)\|_\cM^2  \leq 0,
\]
so the original energy decays monotonically without assuming that $F$ is bounded from below~\cite{LM-Cheng}. We note that the branch $q(t)\equiv 1$ recovers the original gradient flow~\eqref{eq:general-gradient-flow}. Indeed, combining the scalar constraint in~\eqref{eq:LM-continuous} with the chain rule $\frac{d}{dt}\int_\Omega F(\phi)\,\diff\bx=\int_\Omega f(\phi)\,\partial_t\phi\,\diff\bx$ shows that it reduces to $(q(t)-1)\int_\Omega f(\phi)\,\partial_t\phi\,\diff\bx = 0$, so $q(t)\equiv 1$ except at states where $\int_\Omega f(\phi)\,\partial_t\phi\,\diff\bx = 0$, at which the constraint does not uniquely determine $q(t)$. Compared to the SAV approach, the LM method dissipates the original energy and does not rely on a modified energy. However, solvability issues arise as discussed below.

Using the same notation, we can obtain the LM-BDF1 and LM-CN schemes as follows.
\begin{scheme}[LM-BDF1 Scheme] \label{sch:LM-BDF1}
\begin{align}
& \frac{\phi^{n+1}-\phi^n}{\Delta t} = -\cM\,\mu^{n+1}, \label{eq:LM-BDF1-1}\\
& \mu^{n+1} = \cL\,\phi^{n+1} + q^{n+1}\, f\big(\phi^{n}\big), \label{eq:LM-BDF1-2}\\
& \left( F(\phi^{n+1}) - F(\phi^{n}),\, 1 \right) = q^{n+1}\, \left( f \big( \phi^{n}\big),\, \phi^{n+1}-\phi^{n} \right). \label{eq:LM-BDF1-3}
\end{align}
\end{scheme}

\begin{scheme}[LM-CN Scheme] \label{sch:LM-CN}
\begin{align}
& \frac{\phi^{n+1}-\phi^n}{\Delta t} = -\cM\,\mu^{n+\half}, \label{eq:LM-CN-1}\\
& \mu^{n+\half} = \cL\,\phi^{n+\half} + q^{n+\half}\, f\big(\phibar^{n+\half}\big), \label{eq:LM-CN-2}\\
& \left( F(\phi^{n+1}) - F(\phi^{n}),\, 1 \right) = q^{n+\half}\, \left( f \big( \phibar^{n+\half}\big),\, \phi^{n+1}-\phi^{n} \right). \label{eq:LM-CN-3}
\end{align}
\end{scheme}

Scheme~\ref{sch:LM-BDF1} and Scheme~\ref{sch:LM-CN} are energy-stable in the sense that
\beq \label{eq:LM-CN-energy-stability}
E(\phi^{n+1})-E(\phi^n) \leq 0.
\eeq
However, the existence and uniqueness of solutions are not guaranteed, and solvability may depend on the time step size $\Delta t$~\cite{Cheng2025_SIAM}.

In practice, one time step of the LM--CN scheme proceeds as follows. Denote 
\beq \label{eq:LM-CN-affine-representation}
\phi^{n+1} = \phi_1^{n+1} + q^{n+1} \phi_2^{n+1}.
\eeq
\begin{enumerate}
\item Define the linear operator
\beq \label{eq:LM-CN-linear-operator}
\cG \coloneqq \frac{1}{\Delta t} I + \frac{1}{2}\cM\,\cL.
\eeq
We assume throughout that $\cG$ is invertible. This holds for the canonical AC and CH choices under the stated boundary conditions, since $\cM\cL \geq 0$ in both cases (for AC, $\cM\cL = -M\varepsilon^2\Delta \geq 0$, and for CH, $\cM\cL = M\varepsilon^2\Delta^2 \geq 0$), so that $\cG \geq \frac{1}{\Delta t} I > 0$.
Given \(\phi^n\), compute \(\phi_1^{n+1}\) and \(\phi_2^{n+1}\) by solving the two linear problems with the operator \(\cG\) in~\eqref{eq:LM-CN-linear-operator},
\[
\cG\, \phi_1^{n+1} = \frac{\phi^n}{\Delta t}-\frac{1}{2}\cM\,\cL\phi^n-\frac{1}{2}\cM\,q^n f\big(\phibar^{n+\half}\big),
\qquad
\cG\, \phi_2^{n+1} = -\frac{1}{2}\cM\,  f\big(\phibar^{n+\half}\big),
\]
\item Get $q^{n+1}$ by solving the scalar nonlinear equation
\begin{equation}\label{eq:eta_scalar}
\Big( F(\phi_1^{n+1} + q^{n+1} \phi_2^{n+1}) - F(\phi^n),\, 1 \Big)
\;=\;
q^{n+\half}\, \Big( f \big(\phibar^{n+\half}\big),\, \phi_1^{n+1} + q^{n+1} \phi_2^{n+1}-\phi^n \Big).
\end{equation}
Here $q^{n+\half}=\frac{1}{2}(q^n+q^{n+1})$, so Eq.~\eqref{eq:eta_scalar} is a scalar equation for the single unknown $q^{n+1}$. This can be solved, for example, by Newton's method with initial guess \(q^{n+1}=1\).
\item Recover the new solution \(\phi^{n+1}\) from the affine representation in Eq.~\eqref{eq:LM-CN-affine-representation},
and proceed to the next time step.
\end{enumerate}

\begin{remark}
Eq.~\eqref{eq:eta_scalar} is a scalar nonlinear algebraic equation for $q^{n+1}$. In the proposed RLM scheme, this relation is relaxed and, with a suitable quadratic approximation of $F^{n+1}$, reduces to a quadratic equation for the scaling factor that admits a closed-form solution.
\end{remark}

The SAV and LM methods each have advantages and limitations. A significant advantage of the LM method is that it dissipates the original energy $E$ without requiring $F$ to be bounded from below. On the other hand, at each time step, we must solve a nonlinear scalar equation for $q$, which may admit multiple solutions, making the stability and error analysis more involved.

\section{Relaxed Lagrange Multiplier (RLM) Methods}\label{sec:rlm-methods}

We now present our proposed Relaxed Lagrange Multiplier (RLM) schemes, which combine the advantages of both approaches while avoiding their drawbacks. The key idea is to augment the LM formulation with a relaxation term involving a parameter $\alpha \geq 0$. This relaxation serves two purposes. First, combined with a suitable quadratic approximation of $F^{n+1}$, it yields a quadratic equation for the multiplier that admits a closed-form solution, with the relaxation term $\alpha [(q^{n+1})^2-(q^n)^2]$ ensuring solvability and root-separation. Second, it provides additional flexibility in controlling the deviation of $q(t)$ from 1.
Unlike SAV methods, the RLM schemes preserve the energy dissipation law in the original variables. Unlike the standard LM method, they avoid solving a nonlinear equation and ensure that the algebraic system is uniquely solvable with a suitable relaxation parameter $\alpha$.

\subsection{RLM Reformulation}
For the general phase-field equation~\eqref{eq:general-gradient-flow}, we reformulate it as
\begin{align}
& \partial_t \phi = -\cM \mu, \label{eq:general-gradient-flow-discretized} \\
& \mu = \cL \phi + q(t)\,f(\phi), \quad f(\phi) = F'(\phi), \label{eq:general-gradient-flow-discretized-2} \\
& \frac{d}{dt} \int_\Omega F(\phi) \,\diff\bx + \alpha \,\frac{d \big(q(t)\big)^2 }{dt}= \int_\Omega q(t)\, f(\phi) \partial_t \phi \,\diff\bx, \label{eq:general-gradient-flow-scaled-2}
\end{align}
where $\alpha\ge 0$ is a relaxation parameter and $q(t)$ is a scaling factor with the consistent initial condition $q(0)=1$.
Taking the $L^2$ inner product of~\eqref{eq:general-gradient-flow-discretized} with $\mu$, expanding $\mu$ via~\eqref{eq:general-gradient-flow-discretized-2}, and using~\eqref{eq:general-gradient-flow-scaled-2} together with the time-independence and self-adjointness of $\cL$ and $\cM$, we obtain the relaxed energy dissipation law
\beq \label{eq:RLM-continuous-energy-law}
\frac{d}{dt}\Big[ E(\phi) + \alpha \big(q(t)^2-1\big) \Big] = -\|\cL\phi + q(t) f(\phi)\|_\cM^2 \leq 0.
\eeq
We refer to $\tilde{E}(\phi,q) := E(\phi) + \alpha (q^2-1)$ as the relaxed (original) energy. With the consistent initial condition $q(0)=1$, it satisfies $\tilde{E}(\phi(0),q(0)) = E(\phi(0))$.
Moreover, the reformulation is consistent with the original model~\eqref{eq:general-gradient-flow}. If $\phi$ solves~\eqref{eq:general-gradient-flow}, then the pair $(\phi, q\equiv 1)$ satisfies~\eqref{eq:general-gradient-flow-discretized}--\eqref{eq:general-gradient-flow-scaled-2}, since for $q\equiv 1$ Eq.~\eqref{eq:general-gradient-flow-scaled-2} reduces to the chain rule $\frac{d}{dt}\int_\Omega F(\phi)\,\diff\bx = \int_\Omega f(\phi)\,\partial_t\phi\,\diff\bx$. In this case $\tilde{E}(\phi,q\equiv 1) = E(\phi)$ exactly, so~\eqref{eq:RLM-continuous-energy-law} reduces to the original energy dissipation law~\eqref{eq:general-gradient-flow-energy-law}.

\begin{remark}
For $\alpha>0$, combining~\eqref{eq:general-gradient-flow-scaled-2} with the chain rule $\frac{d}{dt}\int_\Omega F(\phi)\,\diff\bx=\int_\Omega f(\phi)\,\partial_t\phi\,\diff\bx$ yields the explicit scalar ODE
\beq
\dot q(t)=\frac{\big(q(t)-1\big)\int_\Omega f(\phi)\,\partial_t\phi\,\diff\bx}{2\alpha\,q(t)}.
\eeq
When $q(0)=1$, it admits $q\equiv 1$ as a solution branch. This is the well-posedness improvement that the relaxation provides over the bare LM constraint.
\end{remark}

For the Allen--Cahn (AC) type, applying the RLM reformulation to the original AC-type equation~\eqref{eq:AC-type-phi} with $\cM = M I$ (where $M>0$ is a constant and $I$ is the identity operator), we obtain
\begin{align}
& \partial_t \phi = -M \mu, \label{eq:RLM-AC-type-phi} \\
& \mu = \cL \phi + q(t)\,f(\phi), \quad f(\phi) = F'(\phi), \label{eq:RLM-AC-type-mu} \\
& \frac{d}{dt} \int_\Omega F(\phi) \,\diff\bx + \alpha \,\frac{d \big(q(t)\big)^2 }{dt}= \int_\Omega q(t)\, f(\phi) \partial_t \phi \,\diff\bx. \label{eq:RLM-AC-type-q}
\end{align}

For the Cahn--Hilliard (CH) type, applying the RLM reformulation to the original CH-type equation~\eqref{eq:CH-type-phi} with $\cM = -M\Delta$ (where $M > 0$ is the mobility constant), we obtain
\begin{align}
& \partial_t \phi = M \Delta \mu, \label{eq:RLM-CH-type-phi} \\
& \mu = \cL \phi + q(t)\,f(\phi), \quad f(\phi) = F'(\phi), \label{eq:RLM-CH-type-mu} \\
& \frac{d}{dt} \int_\Omega F(\phi) \,\diff\bx + \alpha \,\frac{d \big(q(t)\big)^2 }{dt}= \int_\Omega q(t)\, f(\phi) \partial_t \phi \,\diff\bx. \label{eq:RLM-CH-type-q}
\end{align}
As with the original CH-type system, the RLM reformulation preserves mass conservation~\eqref{eq:CH-type-mass-conservation} under homogeneous Neumann boundary conditions ($\mathbf{n} \cdot \nabla \mu = 0$ on $\partial\Omega$),
\begin{equation}
\frac{d}{dt} \int_\Omega \phi \,\diff\bx = 0. \label{eq:RLM-CH-mass-conservation}
\end{equation}
This follows from integrating~\eqref{eq:RLM-CH-type-phi} over $\Omega$ and applying the divergence theorem with the boundary condition, just as in the original CH-type case. The introduction of the scaling factor $q(t)$ does not affect mass conservation, as it appears only in the chemical potential $\mu$ and not in the evolution equation for $\phi$.

\begin{remark}
If $\alpha=0$, such a strategy reduces to the Lagrange multiplier method~\cite{LM-Cheng} and one of the special cases of the supplementary variable method~\cite{SVM-Gong}.
\end{remark}

\begin{remark}
For simplicity, and to avoid obscuring the effects of the stabilization technique, we choose the regularization parameter $\gamma_0 = 0$~\cite{ChenZhaoYang2018} in the RLM scheme. A non-zero treatment, as in the SAV approach in Eq.~\eqref{eq:sav-q}, can be carried out similarly. Positive $\gamma_0$ increases the numerical stability by introducing a stabilization term as in~\cite{Wang&YuJSC2018}.
\end{remark}

\subsection{RLM Numerical Schemes}

Discretizing Eqs.~\eqref{eq:general-gradient-flow-discretized}--\eqref{eq:general-gradient-flow-scaled-2} using BDF1 or CN semi-implicit schemes yields the first-order RLM-BDF1 scheme and the second-order RLM-CN scheme as follows.

\begin{scheme}[RLM-BDF1 Scheme] \label{sch:RLM-BDF}
\begin{align}
& \frac{\phi^{n+1} - \phi^n}{\Delta t} = -\cM \mu^{n+1}, \label{eq:RLM-BDF-1} \\
& \mu^{n+1} = \cL \phi^{n+1} + q^{n+1} f(\phi^{n}), \label{eq:RLM-BDF-2} \\
& \int_\Omega \frac{F^{n+1} - F^n}{\Delta t} \,\diff\bx + \alpha \frac{ (q^{n+1})^2 - (q^n)^2}{\Delta t}= q^{n+1} \int_\Omega f(\phi^{n}) \frac{ \phi^{n+1} - \phi^n}{\Delta t} \,\diff\bx, \label{eq:RLM-BDF-3}
\end{align}
where $F^{n+1}$ and $F^n$ are approximations of $F(\phi)$ at the times $t^{n+1}$ and $t^n$, respectively. 
\end{scheme}

\begin{scheme}[RLM-CN Scheme] \label{sch:RLM-CN}
\begin{align}
& \frac{\phi^{n+1} - \phi^n}{\Delta t} = -\cM \mu^{n+\half}, \label{eq:RLM-CN-1} \\
& \mu^{n+\half} = \cL \phi^{n+\half} + q^{n+\half} f(\phibar^{n+\half}), \label{eq:RLM-CN-2} \\
& \int_\Omega \frac{F^{n+1} - F^n}{\Delta t} \,\diff\bx + \alpha \frac{ (q^{n+1})^2 - (q^n)^2}{\Delta t}= q^{n+\half} \int_\Omega f(\phibar^{n+\half}) \frac{ \phi^{n+1} - \phi^n}{\Delta t} \,\diff\bx, \label{eq:RLM-CN-3}
\end{align}
where $F^{n+1}$ and $F^n$ are approximations of $F(\phi)$ at the times $t^{n+1}$ and $t^n$, respectively. 
\end{scheme}

\begin{remark}
For instance, $F^{n+1} = F(\phi^{n+1})$ and $F^n = F(\phi^n)$. Special treatments of $F^{n+1}$ and $F^n$ will be discussed in Sections \ref{sec:rlm-quadratization} and \ref{sec:rlm-PC}.
\end{remark}

\begin{theorem}\label{thm:RLM-energy-stability}
Assume that at each time step the discrete system admits a solution $(\phi^{n+1}, q^{n+1})$ (see the solvability results in Section~\ref{sec:rlm-quadratization}). Schemes~\ref{sch:RLM-BDF} and~\ref{sch:RLM-CN} are energy-stable in the sense that
\begin{equation}
\tilde{E}(\phi^{n+1}, q^{n+1})-\tilde{E}(\phi^n, q^n) \leq 0,
\end{equation}
where $\tilde{E}(\phi^n, q^n) = \frac{1}{2} \|\phi^n\|_\cL^2 + \int_\Omega F^n \,\diff\bx + \alpha \big[(q^n)^2-1\big]$.
\end{theorem}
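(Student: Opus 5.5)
The argument is the standard discrete energy method: pair the evolution equation with the chemical potential, use the scalar constraint to cancel the nonlinear contribution, and rely on the quadratic structure of $\|\cdot\|_\cL^2$. I treat the two schemes separately but in parallel.

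\emph{RLM-BDF1.} Take the $L^2$ inner product of \eqref{eq:RLM-BDF-1} with $\Delta t\,\mu^{n+1}$. This gives
\[
(\phi^{n+1}-\phi^n,\mu^{n+1}) = -\Delta t\,\|\mu^{n+1}\|_\cM^2 .
\]
Expand the left side using \eqref{eq:RLM-BDF-2}. It equals
\[
(\cL\phi^{n+1},\phi^{n+1}-\phi^n) + q^{n+1}\big(f(\phi^n),\phi^{n+1}-\phi^n\big).
\]
For the linear part, self-adjointness and nonnegativity of $\cL$ give the identity
\[
(\cL\phi^{n+1},\phi^{n+1}-\phi^n)=\half\|\phi^{n+1}\|_\cL^2-\half\|\phi^n\|_\cL^2+\half\|\phi^{n+1}-\phi^n\|_\cL^2 .
\]
For the nonlinear part, multiply \eqref{eq:RLM-BDF-3} by $\Delta t$ to obtain
\[
q^{n+1}\big(f(\phi^n),\phi^{n+1}-\phi^n\big)=\int_\Omega (F^{n+1}-F^n)\,\diff\bx+\alpha\big[(q^{n+1})^2-(q^n)^2\big].
\]
Summing these pieces, and noting that the constant $-\alpha$ in $\tilde E$ cancels in differences, yields
\[
\tilde E(\phi^{n+1},q^{n+1})-\tilde E(\phi^n,q^n) = -\half\|\phi^{n+1}-\phi^n\|_\cL^2-\Delta t\|\mu^{n+1}\|_\cM^2\le 0,
\]
using $\cM\ge0$ and $\cL\ge0$.

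\emph{RLM-CN.} Pair \eqref{eq:RLM-CN-1} with $\Delta t\,\mu^{n+\half}$. The linear term becomes
\[
(\cL\phi^{n+\half},\phi^{n+1}-\phi^n)=\half\|\phi^{n+1}\|_\cL^2-\half\|\phi^n\|_\cL^2 ,
\]
which follows from symmetry of $(\cdot,\cdot)_\cL$ via $(a+b,a-b)_\cL=\|a\|_\cL^2-\|b\|_\cL^2$. The nonlinear term is $q^{n+\half}\big(f(\phibar^{n+\half}),\phi^{n+1}-\phi^n\big)$. By \eqref{eq:RLM-CN-3} multiplied by $\Delta t$, it equals
\[
\int_\Omega(F^{n+1}-F^n)\,\diff\bx+\alpha\big[(q^{n+1})^2-(q^n)^2\big].
\]
Hence
\[
\tilde E^{n+1}-\tilde E^n=-\Delta t\|\mu^{n+\half}\|_\cM^2\le0 .
\]

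The only delicate point is the bookkeeping around the scalar constraint. The right-hand side of the discrete constraint must reproduce exactly the nonlinear term generated by pairing with $\mu$. In the BDF1 case this requires the constraint to use the same multiplier $q^{n+1}$ and the same explicit argument $f(\phi^n)$ as \eqref{eq:RLM-BDF-2}; in the CN case, the same $q^{n+\half}$ and $f(\phibar^{n+\half})$ as \eqref{eq:RLM-CN-2}. This holds by construction.

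The argument never uses any particular form of $F^{n+1}$ or $F^n$. The only requirement is that the same $F^n$ appears both in $\tilde E(\phi^n,q^n)$ and in the constraint at the next step, so that the energies telescope consistently. The existence of the pair $(\phi^{n+1},q^{n+1})$ is assumed in the statement, so no solvability argument is needed here.
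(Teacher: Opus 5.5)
Your proposal is correct and follows the paper's proof essentially step for step: you pair the evolution equation with the chemical potential, use the symmetric-form identities for $(\cdot,\cdot)_\cL$, and substitute the scalar constraint, obtaining $-\Delta t\|\mu^{n+\half}\|_\cM^2$ for RLM-CN and $-\Delta t\|\mu^{n+1}\|_\cM^2-\half\|\phi^{n+1}-\phi^n\|_\cL^2$ for RLM-BDF1. Your closing remark, that only the consistency of $F^{n+1},F^n$ between $\tilde{E}$ and the constraint matters, matches the paper's own comment after the proof.
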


\begin{proof}
We first prove the result for Scheme~\ref{sch:RLM-CN} and then indicate the modification for Scheme~\ref{sch:RLM-BDF}.

Taking the $L^2$ inner products of~\eqref{eq:RLM-CN-1} and~\eqref{eq:RLM-CN-2} with $\mu^{n+\half}$ and $\frac{\phi^{n+1}-\phi^n}{\Delta t}$, respectively, we obtain
\begin{equation}\label{eq:RLM-CN-energy-balance}
\left(\phi^{n+\half},\frac{\phi^{n+1}-\phi^n}{\Delta t}\right)_\cL
+\left(q^{n+\half}f(\phibar^{n+\half}),\frac{\phi^{n+1}-\phi^n}{\Delta t}\right)
=-\|\mu^{n+\half}\|_\cM^2.
\end{equation}
Using the symmetry of $(\cdot,\cdot)_\cL$ and $\phi^{n+\half}=\frac12(\phi^{n+1}+\phi^n)$,
\[
\left(\phi^{n+\half},\frac{\phi^{n+1}-\phi^n}{\Delta t}\right)_\cL
=\frac{1}{2\Delta t}\Big[\|\phi^{n+1}\|_\cL^2-\|\phi^n\|_\cL^2\Big].
\]
Substituting this identity and~\eqref{eq:RLM-CN-3} into~\eqref{eq:RLM-CN-energy-balance}, multiplying by $\Delta t$, and rearranging yields
\[
\tilde{E}(\phi^{n+1}, q^{n+1})-\tilde{E}(\phi^n, q^n)=-\Delta t\,\|\mu^{n+\half}\|_\cM^2\le 0,
\]
where the inequality holds since $\|\cdot\|_\cM^2 \geq 0$ by the non-negativity of $\cM$.

For Scheme~\ref{sch:RLM-BDF}, we take the $L^2$ inner products of~\eqref{eq:RLM-BDF-1} and~\eqref{eq:RLM-BDF-2} with $\mu^{n+1}$ and $\frac{\phi^{n+1}-\phi^n}{\Delta t}$, respectively, and use the identity
\[
\big(\phi^{n+1},\phi^{n+1}-\phi^n\big)_\cL
=\frac{1}{2}\Big[\|\phi^{n+1}\|_\cL^2-\|\phi^n\|_\cL^2+\|\phi^{n+1}-\phi^n\|_\cL^2\Big].
\]
Substituting~\eqref{eq:RLM-BDF-3} and multiplying by $\Delta t$ yields
\[
\tilde{E}(\phi^{n+1}, q^{n+1})-\tilde{E}(\phi^n, q^n)
=-\Delta t\,\|\mu^{n+1}\|_\cM^2-\frac{1}{2}\|\phi^{n+1}-\phi^n\|_\cL^2\le 0,
\]
which carries an additional non-negative dissipation term $\frac{1}{2}\|\phi^{n+1}-\phi^n\|_\cL^2$ from the non-negativity of $\cL$.
\end{proof}

We emphasize that Theorem~\ref{thm:RLM-energy-stability} holds for any choice of the approximations $F^{n+1}$ and $F^n$, provided the scalar relation~\eqref{eq:RLM-BDF-3} or~\eqref{eq:RLM-CN-3} is enforced exactly with the increment $\phi^{n+1}-\phi^n$. This is the case for the RLM-Q variant introduced in the next subsection.

\begin{remark}[Relaxed Energy vs.\ Original Energy]
The relaxed energy
\[
\tilde{E}(\phi^n, q^n) = \frac{1}{2}\|\phi^n\|_\cL^2 + \int_\Omega F^n\,\diff\bx + \alpha \big[(q^n)^2-1\big]
\]
differs from the original energy
\[
E(\phi^n) = \frac{1}{2}\|\phi^n\|_\cL^2 + \int_\Omega F(\phi^n)\,\diff\bx
\]
by the exact decomposition
\[
\tilde{E}(\phi^n, q^n) - E(\phi^n) = \int_\Omega \big(F^n - F(\phi^n)\big)\,\diff\bx + \alpha\big[(q^n)^2 - 1\big].
\]
For a consistent discretization, the first term satisfies 
$$
\int_\Omega (F^n - F(\phi^n))\,\diff\bx = O(\Delta t^k),
$$
where $k$ is the order of the surrogate approximation of $F$. 
The relaxation term $\alpha((q^n)^2-1)$ is observed numerically to be of the same order, $O(\Delta t^{k})$, so that
\[
|\tilde{E}(\phi^n, q^n) - E(\phi^n)| \lesssim O(\Delta t^k).
\]
A rigorous bound on $|q^n-1|$ in terms of $\alpha$ and $\Delta t$, which would make this estimate precise, is left for future work.
\end{remark}

Both RLM-BDF1 and RLM-CN satisfy the mass/volume conservation law with the phase variable $\phi$, which usually represents the mass fraction or volume fraction. Here, we present only the proof for RLM-CN, since the proofs are analogous. 
\begin{theorem}[Mass/Volume Conservation]\label{thm:RLM-mass-conservation}
The RLM-CN scheme (Scheme~\ref{sch:RLM-CN}) for the Cahn--Hilliard equation preserves mass/volume, i.e.,
\[
\int_\Omega \phi^{n+1}\,\diff\bx = \int_\Omega \phi^n\,\diff\bx
\]
for all $n \geq 0$, under homogeneous Neumann boundary conditions $\mathbf{n}\cdot\nabla\mu = 0$ on $\partial\Omega$ (or periodic boundary conditions).
\end{theorem}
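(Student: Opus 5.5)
The plan is to integrate the discrete evolution equation~\eqref{eq:RLM-CN-1} over $\Omega$, using the Cahn--Hilliard choice $\cM=-M\Delta$, and show that the right-hand side integrates to zero. With this choice the first equation of Scheme~\ref{sch:RLM-CN} reads
\[
\frac{\phi^{n+1}-\phi^n}{\Delta t} = M\Delta \mu^{n+\half},
\]
where $\mu^{n+\half}=\cL\phi^{n+\half}+q^{n+\half}f(\phibar^{n+\half})$ is some function on $\Omega$. The precise form of $\mu^{n+\half}$ is irrelevant: the values of $q^{n+\half}$, the extrapolation $\phibar^{n+\half}$, and the choice of surrogate $F^{n+1}$ in~\eqref{eq:RLM-CN-3} all enter only through $\mu^{n+\half}$. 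Hence any solution $(\phi^{n+1},q^{n+1})$ of the discrete system will do.

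Next, take the $L^2$ inner product of this equation with the constant function $1$, i.e.\ integrate over $\Omega$. The divergence theorem gives
\[
\int_\Omega \Delta\mu^{n+\half}\,\diff\bx=\int_{\partial\Omega}\mathbf{n}\cdot\nabla\mu^{n+\half}\,\diff s .
\]
This boundary term vanishes by the homogeneous Neumann condition imposed on $\mu^{n+\half}$. In the periodic case the boundary contributions on opposite faces cancel. Equivalently, one may write $(\cM\mu^{n+\half},1)=(\mu^{n+\half},\cM 1)$ by self-adjointness and note that $\cM 1=-M\Delta 1=0$. Multiplying by $\Delta t/M$ then yields $\int_\Omega\phi^{n+1}\,\diff\bx=\int_\Omega\phi^n\,\diff\bx$, and induction on $n$ extends the identity to all $n\ge 0$.

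The only delicate point is justifying that the boundary condition holds for the discrete chemical potential at the half step. I would handle this by noting that the standing assumptions (Section~\ref{sec:background}) impose $\mathbf{n}\cdot\nabla\mu=0$ on every iterate and that $\cM$ is self-adjoint under these conditions. The same argument applies verbatim to Scheme~\ref{sch:RLM-BDF} with $\mu^{n+1}$ in place of $\mu^{n+\half}$.
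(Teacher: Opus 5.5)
Your proposal is correct and follows the paper's proof: integrate \eqref{eq:RLM-CN-1} with $\cM=-M\Delta$ over $\Omega$, turn $\int_\Omega \Delta\mu^{n+\half}\,\diff\bx$ into a boundary integral by the divergence theorem, and make it vanish using the Neumann (or periodic) condition. Your remark that $q^{n+\half}$, the extrapolation, and the surrogate $F^{n+1}$ enter only through $\mu^{n+\half}$ matches the paper's observation that the scaling factor does not affect mass conservation.
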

\begin{proof}
Integrating equation~\eqref{eq:RLM-CN-1} over the domain $\Omega$, we obtain
\[
\int_\Omega \frac{\phi^{n+1} - \phi^n}{\Delta t}\,\diff\bx = M \int_\Omega \Delta \mu^{n+\half}\,\diff\bx.
\]
Applying the divergence theorem to the right-hand side (with $\mu^{n+\half}$ taken sufficiently regular for the divergence theorem to apply, consistent with the standing boundary-condition assumptions) yields
\[
\int_\Omega \Delta \mu^{n+\half}\,\diff\bx = \int_\Omega \nabla \cdot (\nabla \mu^{n+\half})\,\diff\bx = \int_{\partial\Omega} \mathbf{n} \cdot \nabla \mu^{n+\half}\,\diff S.
\]
Under homogeneous Neumann boundary conditions, $\mathbf{n} \cdot \nabla \mu^{n+\half} = 0$ on $\partial\Omega$, so the boundary integral vanishes. Under periodic boundary conditions, the contributions from opposite faces of $\partial\Omega$ cancel, so the boundary integral likewise vanishes. Then, we have
\[
\int_\Omega \Delta \mu^{n+\half}\,\diff\bx = 0.
\]
Substituting back and multiplying by $\Delta t$ gives
\[
\int_\Omega \phi^{n+1}\,\diff\bx - \int_\Omega \phi^n\,\diff\bx = 0,
\]
which establishes the conservation property. 
\end{proof}

The procedures to solve Scheme~\ref{sch:RLM-BDF} and Scheme~\ref{sch:RLM-CN} are similar. Taking Scheme~\ref{sch:RLM-CN} as an example, we express $\phi^{n+1}$ as an affine function of $q^{n+1}$,
\[
\phi^{n+1} = \phi_1^{n+1} + q^{n+1} \phi_2^{n+1},
\]
by solving the following system of equations.
\begin{itemize}
\item Step 1: Solve $\phi_1^{n+1}$ and $\phi_2^{n+1}$. Denote the operator
\[
\cG \coloneqq \frac{1}{\Delta t} I + \frac{1}{2}\cM \,\cL.
\]
Then, solve $\phi_1^{n+1}$ and $\phi_2^{n+1}$ from the following linear system,
\begin{align}
& \cG \phi_1^{n+1} = \frac{1}{\Delta t}\phi^n-\frac{1}{2}\cM \cL\phi^n - \frac{1}{2}\cM q^n f(\phibar^{n+\half});
\label{eq:RLM-CN-step1} \\
& \cG \phi_2^{n+1} = -\frac{1}{2} \cM f(\phibar^{n+\half}).   \label{eq:RLM-CN-step2}
\end{align}
\item Step 2: Solve $q^{n+1}$. Using the discrete energy equation~\eqref{eq:RLM-CN-3}, we solve $q^{n+1}$ from the following algebraic equation,
\begin{equation}\label{eq:RLM-CN-step3}
\int_\Omega \frac{F^{n+1} - F^n}{\Delta t} \,\diff\bx + \alpha \frac{ (q^{n+1})^2 - (q^n)^2}{\Delta t}= q^{n+\half} \int_\Omega f(\phibar^{n+\half}) \frac{ \phi^{n+1} - \phi^n}{\Delta t} \,\diff\bx.
\end{equation}
With a suitable approximation of $F^{n+1}$.
\item Step 3: Update $\phi^{n+1}=\phi_1^{n+1}+q^{n+1}\phi_2^{n+1}$.
\end{itemize}

\begin{remark}
Eq.~\eqref{eq:RLM-CN-step3} is an algebraic equation for $q^{n+1}$.
For instance, if we approximate $F^{n+1}$ and $F^n$ by $F^{n+1} = F(\phi^{n+1})$ and $F^n = F(\phi^n)$, then Eq.~\eqref{eq:RLM-CN-step3} can be rewritten as
\begin{align}
& \int_\Omega \Big( F(\phi_1^{n+1} + q^{n+1} \phi_2^{n+1}) - F(\phi^n) \Big) \,\diff\bx
+ \alpha \Big[ (q^{n+1})^2 - (q^n)^2 \Big] \notag \\
&\qquad = q^{n+\half} \int_\Omega f(\phibar^{n+\half}) \big(\phi_1^{n+1} + q^{n+1} \phi_2^{n+1} - \phi^n\big)\,\diff\bx. \label{eq:RLM-CN-step3-algebra}
\end{align}
\end{remark}

Once $\phi_1^{n+1}$ and $\phi_2^{n+1}$ are obtained from Eq.~\eqref{eq:RLM-CN-step1} and Eq.~\eqref{eq:RLM-CN-step2}, respectively, we can determine $q^{n+1}$ by solving the algebraic Eq.~\eqref{eq:RLM-CN-step3}.
Unfortunately, due to the nonlinearity of $F(\phi)$, the existence and uniqueness of a solution for $q^{n+1}$ in Eq.~\eqref{eq:RLM-CN-step3} are not guaranteed, as in the LM and SVM approaches \cite{LM-Cheng, SVM-Gong}. We can expect the solvability to depend on the time step size $\Delta t$.

\subsection{RLM Schemes with Quadratization}\label{sec:rlm-quadratization}
The solvability issue identified above motivates the development of RLM schemes that guarantee the existence of a closed-form solution for $q^{n+1}$. The key idea is to approximate $F^{n+1}$ in such a way that Eq.~\eqref{eq:RLM-CN-step3} becomes a quadratic equation in $q^{n+1}$, which admits an explicit solution via the quadratic formula. 

In our first approach, we approximate $F^{n+1}$ by a quadratic function of $\phi^{n+1}$ in a semi-implicit manner. We refer to this approach as relaxed Lagrange multiplier with quadratization (RLM-Q). 
This approach is practical for a broad class of potentials. For instance, we can introduce a stabilization constant $S > 0$ such that 
\beq \label{eq:F-quadratize-stabilization}
F^{n+1} = S (\phi^{n+1})^2 + F(\phibar^{n+1}) - S (\phibar^{n+1})^2.
\eeq
Therefore, Eq.~\eqref{eq:RLM-CN-step3} is a quadratic equation for $q^{n+1}$.

In particular, we can refine this approximation by properly exploring $F(\phi)$. For instance, when $F(\phi) = \frac{1}{4}(\phi^2 -1)^2$, we can approximate $F^{n+1}$ and $F^n$ by
\beq \label{eq:F-quadratize}
F^{n+1} = \frac{1}{4}(\phi^{n+1} \phibar^{n+1} - 1)^2, \quad F^n = \frac{1}{4} (\phi^n \phibar^n - 1)^2.
\eeq
After multiplying through by $\Delta t$ to clear the common $1/\Delta t$ factor, Eq.~\eqref{eq:RLM-CN-step3} is simplified as
\begin{align}
& \int_\Omega \Big( \frac{1}{4}(\phi^{n+1} \phibar^{n+1} - 1)^2 - \frac{1}{4} (\phi^n \phibar^n - 1)^2 \Big) \diff\bx
+ \alpha \Big[ (q^{n+1})^2 - (q^n)^2 \Big] \notag \\
&\qquad = q^{n+\half} \int_\Omega f(\phibar^{n+\half}) \big(\phi^{n+1} - \phi^n\big)\,\diff\bx. \label{eq:RLM-CN-Q-algebra}
\end{align}
This expression is quadratic in $\phi^{n+1}$ and, through the affine representation $\phi^{n+1}=\phi_1^{n+1}+q^{n+1}\phi_2^{n+1}$, in the unknown $q^{n+1}$. For Eq.~\eqref{eq:F-quadratize}, we can write
\[
\int_\Omega F^{n+1} \,\diff\bx = A_0 (q^{n+1})^2 + B_0 q^{n+1} + C_0.
\]
with the coefficients calculated as
\[
A_0  = \int_\Omega \frac{1}{4}(\phibar^{n+1} \phi_2^{n+1})^2 \,\diff\bx, \quad
B_0  = \int_\Omega \frac{1}{2} (\phibar^{n+1} \phi_1^{n+1} - 1) \phibar^{n+1} \phi_2^{n+1} \,\diff\bx,
\]
\[
C_0  = \int_\Omega \frac{1}{4} (\phibar^{n+1} \phi_1^{n+1} -1)^2 \,\diff\bx.
\]

Therefore, Eq.~\eqref{eq:RLM-CN-Q-algebra} is a quadratic equation for $q^{n+1}$. We can write it as
\begin{equation} \label{eq:RLM-Q-quadratic}
A_1 (q^{n+1})^2+B_1 q^{n+1}+C_1=0, 
\end{equation}
where the coefficients for the quadratic equations are derived as
\begin{align*}
A_1  & = \tilde{A}_0 + \alpha, \quad \tilde{A}_0 =  A_0 - \half \int_\Omega f(\phibar^{n+\half}) \phi_2^{n+1} \,\diff\bx\\
B_1 & = \tilde{B}_0, \quad \tilde{B}_0 = B_0 - \half \int_\Omega f(\phibar^{n+\half}) (\phi_1^{n+1} - \phi^n) \,\diff\bx - \half q^n \int_\Omega f(\phibar^{n+\half}) \phi_2^{n+1}\,\diff\bx, \\
C_1 & = \tilde{C}_0 - \alpha (q^n)^2, \quad \tilde{C}_0 = C_0 - \int_\Omega F^n \,\diff\bx - \half q^n \int_\Omega f(\phibar^{n+\half}) (\phi_1^{n+1} - \phi^n)\,\diff\bx.
\end{align*}
Here $\int_\Omega F^n\,\diff\bx$ denotes the same quadratized surrogate as in~\eqref{eq:F-quadratize}, namely $\int_\Omega F^n\,\diff\bx=\int_\Omega \tfrac14(\phi^n\phibar^n-1)^2\,\diff\bx$.
The following lemma formalizes this solvability result.

\begin{lemma}[Solvability of RLM-Q Quadratic Equation]\label{lem:RLM-Q-solvability}
Assume $q^n>0$ (consistent with the root-selection criterion $q^{n+1}\approx q^n\approx 1$).
Then Eq.~\eqref{eq:RLM-Q-quadratic} has at least one real solution in the following cases:
\begin{enumerate}
\item If $(\tilde{A}_0 (q^n)^2+\tilde{C}_0)^2 < (q^n)^2\tilde{B}_0^2$, then the equation is solvable for any $\alpha \geq 0$.
\item If $(\tilde{A}_0 (q^n)^2+\tilde{C}_0)^2 \geq (q^n)^2\tilde{B}_0^2$, then the equation is solvable when
\[
\alpha \geq \alpha^* \coloneqq \max\left\{0, \frac{\tilde{C}_0 - \tilde{A}_0(q^n)^2+\sqrt{(\tilde{A}_0 (q^n)^2+\tilde{C}_0)^2-(q^n)^2\tilde{B}_0^2} }{2(q^n)^2}\right\}.
\]
\end{enumerate}
\end{lemma}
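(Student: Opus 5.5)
The plan is to regard the real solvability of Eq.~\eqref{eq:RLM-Q-quadratic} as a sign condition on its discriminant, and to study that discriminant as a function of the relaxation parameter $\alpha$. Write $q=q^n>0$ for brevity. The quantities $\tilde A_0,\tilde B_0,\tilde C_0$ depend only on $\phi^n,\phi^{n-1},\phi_1^{n+1},\phi_2^{n+1}$. By \eqref{eq:RLM-CN-step1}--\eqref{eq:RLM-CN-step2}, $\phi_1^{n+1},\phi_2^{n+1}$ do not involve $\alpha$, so these three numbers are fixed with respect to $\alpha$. Only $A_1=\tilde A_0+\alpha$ and $C_1=\tilde C_0-\alpha q^2$ move with $\alpha$.

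First I will compute
\[
D(\alpha)\coloneqq B_1^2-4A_1C_1=\tilde B_0^2-4(\tilde A_0+\alpha)(\tilde C_0-\alpha q^2)
=4q^2\alpha^2+4(\tilde A_0q^2-\tilde C_0)\alpha+\tilde B_0^2-4\tilde A_0\tilde C_0 .
\]
Since $q>0$, this is an upward-opening parabola in $\alpha$. Its own discriminant is
\[
16(\tilde A_0q^2-\tilde C_0)^2-16q^2(\tilde B_0^2-4\tilde A_0\tilde C_0)=16\big[(\tilde A_0q^2+\tilde C_0)^2-q^2\tilde B_0^2\big],
\]
using $(a-c)^2+4ac=(a+c)^2$. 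This identity is exactly what produces the dichotomy in the statement.

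In Case 1 this quantity is negative, so $D(\alpha)>0$ for every real $\alpha$, in particular every $\alpha\ge0$. Hence two distinct real roots exist whenever $A_1\neq0$.

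In Case 2 the parabola $D$ has real roots
\[
\alpha_\pm=\frac{\tilde C_0-\tilde A_0q^2\pm\sqrt{(\tilde A_0q^2+\tilde C_0)^2-q^2\tilde B_0^2}}{2q^2},
\]
and $D(\alpha)\ge0$ for all $\alpha\ge\alpha_+$. Intersecting with the admissible range $\alpha\ge0$ gives the threshold $\alpha^*=\max\{0,\alpha_+\}$ of the statement.

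The remaining point, which I expect to be the only delicate one, is the degenerate value $\alpha=-\tilde A_0$ (possible only if $\tilde A_0\le0$), where $A_1=0$ and the equation becomes linear, $B_1q^{n+1}+C_1=0$. There $D=B_1^2$.
\begin{itemize}
\item Whenever $D>0$ (all of Case 1, and Case 2 with $\alpha>\alpha_+$), we get $B_1\neq0$, so the linear equation still has the unique real solution $-C_1/B_1$.
\item The truly borderline situation is $A_1=0$ together with $\alpha=\alpha_+$. Then $B_1=0$, and a direct check is needed: $D(-\tilde A_0)=\tilde B_0^2=0$ forces the equation to read $C_1=\tilde C_0+\tilde A_0q^2=0$. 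That identity follows from the Case~2 quantity vanishing there, since $(\tilde A_0q^2+\tilde C_0)^2=q^2\tilde B_0^2=0$. So every $q^{n+1}$ solves it, and solvability persists.
\end{itemize}
With this bookkeeping done, the lemma follows from the quadratic formula. I would close by remarking that for $\alpha$ large, $C_1<0<A_1$, which also yields exactly one positive root, consistent with the root-selection criterion.
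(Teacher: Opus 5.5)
Your main argument is the same as the paper's. You treat $D(\alpha)=B_1^2-4A_1C_1$ as an upward parabola in $\alpha$, compute its discriminant $16\big[(\tilde A_0(q^n)^2+\tilde C_0)^2-(q^n)^2\tilde B_0^2\big]$, and split into the two cases. That part is correct, and so is your handling of $A_1=0$ in Case~1, where $D>0$ forces $B_1\neq0$.

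The paper simply assumes $A_1\neq0$ at this point. Your attempt to go further and cover the borderline $A_1=0$, $\alpha=\alpha_+$ contains a false step. The equality $D(\alpha_+)=0$ holds by the definition of $\alpha_+$. It does not make the Case~2 quantity (the discriminant of $D$) vanish; that would mean $\alpha_+=\alpha_-$. When $\tilde B_0=0$, the roots of $D$ are $-\tilde A_0$ and $\tilde C_0/(q^n)^2$. So $\alpha_+=-\tilde A_0$ requires only $\tilde A_0(q^n)^2+\tilde C_0\le 0$, and then $C_1=\tilde C_0+\tilde A_0(q^n)^2$ can be strictly negative.

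Two concrete failures:
\begin{itemize}
\item Take $q^n=1$, $\tilde A_0=-1$, $\tilde B_0=\tilde C_0=0$. This is Case~2 with $\alpha^*=1$, and at $\alpha=1$ the equation reads $0\cdot(q^{n+1})^2+0\cdot q^{n+1}-1=0$.
\item Take $\tilde A_0=\tilde B_0=0$ and $\tilde C_0<0$. This gives $\alpha^*=0$, and the equation becomes $\tilde C_0=0$.
\end{itemize}
So at the endpoint $\alpha=\alpha^*$ the lemma's claim is false in this corner, and no argument can establish it there.

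The fix is to exclude the corner rather than prove it. You can keep the paper's standing assumption $A_1\neq0$, i.e.\ $\alpha\neq-\tilde A_0$. Alternatively, state Case~2 for $\alpha>\alpha^*$. There $D(\alpha)>0$, and your own first bullet shows that a degenerate $A_1=0$ still has $B_1\neq0$ and the real root $-C_1/B_1$.

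A minor inaccuracy: $\tilde B_0$ and $\tilde C_0$ also depend on $q^n$ (through $\phi_1^{n+1}$ and the explicit $q^n$ terms). $\tilde C_0$ also depends on $\phi^{n-2}$ through $\int_\Omega F^n$. They are still independent of $\alpha$, which is all you use.
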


\begin{proof}
Assume $q^n>0$.
If $A_1 = \tilde{A}_0 + \alpha = 0$, Eq.~\eqref{eq:RLM-Q-quadratic} degenerates to a linear equation, which is solvable whenever $B_1 \neq 0$ (and if $A_1 = B_1 = 0$, a solution exists only when $C_1 = 0$, in which case every $q^{n+1}$ is a solution). We henceforth assume $A_1 \neq 0$ (this is guaranteed, e.g., when $\alpha > -\tilde{A}_0$, cf.\ Corollary~\ref{cor:RLM-Q-unique}).
The discriminant of equation~\eqref{eq:RLM-Q-quadratic} is
\begin{align*}
\Delta_1 &= B_1^2 - 4A_1C_1 \\
&= \tilde{B}_0^2 - 4(\tilde{A}_0 + \alpha)(\tilde{C}_0 - \alpha(q^n)^2) \\
&= \tilde{B}_0^2 - 4\tilde{A}_0\tilde{C}_0 + 4\alpha\tilde{A}_0(q^n)^2 - 4\alpha\tilde{C}_0 + 4\alpha^2(q^n)^2 \\
&= 4(q^n)^2\alpha^2 + 4\big(\tilde{A}_0(q^n)^2 - \tilde{C}_0\big)\alpha + \big(\tilde{B}_0^2 - 4\tilde{A}_0\tilde{C}_0\big).
\end{align*}
This is a quadratic polynomial in $\alpha$ with leading coefficient $4(q^n)^2 > 0$. Therefore, $\Delta_1 \to +\infty$ as $\alpha \to \infty$.

For the discriminant $\Delta_1 \geq 0$, we require the quadratic in $\alpha$ to be non-negative. The discriminant of this quadratic in $\alpha$ is
\[
D_\alpha = 16\big[(\tilde{A}_0(q^n)^2 + \tilde{C}_0)^2 - (q^n)^2\tilde{B}_0^2\big].
\]

If $D_\alpha < 0$, i.e., $(\tilde{A}_0(q^n)^2 + \tilde{C}_0)^2 < (q^n)^2\tilde{B}_0^2$, then the quadratic in $\alpha$ has no real roots. Since its leading coefficient is positive and it has no real roots, the quadratic in $\alpha$ is strictly positive, so $\Delta_1 > 0$ for all $\alpha \geq 0$, and the equation \eqref{eq:RLM-Q-quadratic} is solvable for any $\alpha \geq 0$.

If $D_\alpha \geq 0$, i.e., $(\tilde{A}_0(q^n)^2 + \tilde{C}_0)^2 \geq (q^n)^2\tilde{B}_0^2$, then the quadratic in $\alpha$ has two real roots. Since the leading coefficient is positive, $\Delta_1 \geq 0$ when $\alpha$ lies outside the interval between these roots. The larger root gives the threshold $\alpha^*$,
\[
\alpha^* = \max\left\{0, \frac{\tilde{C}_0 - \tilde{A}_0(q^n)^2 + \sqrt{(\tilde{A}_0(q^n)^2 + \tilde{C}_0)^2 - (q^n)^2\tilde{B}_0^2}}{2(q^n)^2}\right\}.
\]
We note that $\Delta_1 \geq 0$ also holds for $\alpha$ at or below the smaller root. We state only the large-$\alpha$ branch, which is the regime of practical interest.
When two real roots exist for $q^{n+1}$, we select the root closest to $q^n$ to ensure continuity of the scaling factor.
\end{proof}

The solvability conditions in Lemma~\ref{lem:RLM-Q-solvability} guarantee the existence of a real root. The following corollary shows that the quadratic admits exactly one positive root. For sufficiently large $\alpha$, the two roots approach $\pm q^n$, so the closest-to-$q^n$ selection rule returns this positive root, and the update of the scaling factor is uniquely determined.

\begin{corollary}[Unique Positive Root for RLM-Q]\label{cor:RLM-Q-unique}
Assume $q^n > 0$. If
$
\alpha > \max\left\{ -\tilde{A}_0,\; \frac{\tilde{C}_0}{(q^n)^2} \right\},
$
then $A_1 > 0$ and $C_1 < 0$ in~\eqref{eq:RLM-Q-quadratic}. Consequently, the discriminant satisfies $\Delta_1 = B_1^2 - 4A_1C_1 > 0$, and the two real roots of~\eqref{eq:RLM-Q-quadratic} have opposite signs. Hence Eq.~\eqref{eq:RLM-Q-quadratic} admits exactly one positive root. For sufficiently large $\alpha$, the two roots approach $\pm q^n$, so the closest-to-$q^n$ selection rule returns this positive root, and $q^{n+1}$ is uniquely determined.
\end{corollary}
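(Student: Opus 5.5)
The argument reads off the signs of the coefficients of~\eqref{eq:RLM-Q-quadratic} and then uses Vieta's formulas. Recall $A_1=\tilde{A}_0+\alpha$ and $C_1=\tilde{C}_0-\alpha(q^n)^2$. The hypothesis $\alpha>-\tilde{A}_0$ gives $A_1>0$ directly. Since $q^n>0$, the hypothesis $\alpha>\tilde{C}_0/(q^n)^2$ is equivalent to $\alpha(q^n)^2>\tilde{C}_0$, which is $C_1<0$. In particular $A_1\neq 0$, so the equation is genuinely quadratic and the degenerate linear case of Lemma~\ref{lem:RLM-Q-solvability} does not arise.

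\textbf{Discriminant and sign of the roots.} From $A_1>0$ and $C_1<0$ we get $-4A_1C_1>0$, hence $\Delta_1=B_1^2-4A_1C_1\ge -4A_1C_1>0$, so there are two distinct real roots $r_\pm$. By Vieta, $r_+r_-=C_1/A_1<0$, so the roots are nonzero and of opposite signs. Exactly one of them is positive:
\[
r_+=\frac{-B_1+\sqrt{\Delta_1}}{2A_1},
\]
which is positive because $\sqrt{\Delta_1}>|B_1|$.

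\textbf{Large-$\alpha$ behavior.} Divide the equation by $\alpha$:
\[
\Big(1+\tfrac{\tilde{A}_0}{\alpha}\Big)q^2+\tfrac{\tilde{B}_0}{\alpha}q+\tfrac{\tilde{C}_0}{\alpha}-(q^n)^2=0 .
\]
As $\alpha\to\infty$ with $\tilde{A}_0,\tilde{B}_0,\tilde{C}_0,q^n$ fixed (they depend only on data from step $n$ and on $\phi_1^{n+1},\phi_2^{n+1}$, not on $\alpha$), the coefficients converge to those of $q^2-(q^n)^2=0$. The roots depend continuously on the coefficients when the leading coefficient is nonzero, so $r_\pm\to\pm q^n$. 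Explicitly, $r_+=q^n+O(1/\alpha)$ and $r_-=-q^n+O(1/\alpha)$.

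\textbf{Selection rule.} Hence $|r_+-q^n|\to 0$ while $|r_--q^n|\to 2q^n>0$. For $\alpha$ large enough, the root closest to $q^n$ is therefore the positive root $r_+$, and $q^{n+1}=r_+$ is uniquely determined.

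The only delicate point is this last step, because ``sufficiently large'' is not quantified. If an explicit threshold were wanted, I would first try bounding $|r_\pm\mp q^n|$ through the perturbation $(\tilde{A}_0q^2+\tilde{B}_0q+\tilde{C}_0)/\alpha$, which gives something like $\alpha\gtrsim(|\tilde{A}_0|(q^n)^2+|\tilde{B}_0|q^n+|\tilde{C}_0|)/(q^n)^2$. The corollary only asserts the asymptotic statement, so the continuity argument suffices.
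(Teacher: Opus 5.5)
Your proposal is correct and, for the part the paper actually proves, follows its argument step for step: $A_1>0$ and $C_1<0$ from the two hypotheses, then $\Delta_1\ge -4A_1C_1>0$, then the Vieta product $C_1/A_1<0$ forcing roots of opposite sign. The paper's proof stops there and never justifies the large-$\alpha$ selection claim, so your continuity argument ($r_\pm\to\pm q^n$ with $\tilde A_0,\tilde B_0,\tilde C_0,q^n$ frozen at step $n$) is a genuine addition. It can even be made explicit without asymptotics: $r_+$ is the root nearest $q^n$ exactly when $q^n$ exceeds the midpoint $-B_1/(2A_1)$ of the two roots, i.e.\ when $2q^n(\tilde A_0+\alpha)+\tilde B_0>0$.
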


\begin{proof}
The condition $\alpha > -\tilde{A}_0$ gives $A_1 = \tilde{A}_0 + \alpha > 0$, and $\alpha > \tilde{C}_0/(q^n)^2$ gives $C_1 = \tilde{C}_0 - \alpha (q^n)^2 < 0$. Then $\Delta_1 = B_1^2 - 4A_1C_1 \geq -4A_1C_1 > 0$, so~\eqref{eq:RLM-Q-quadratic} has two distinct real roots. The product of the roots equals $C_1/A_1 < 0$, so the roots have opposite signs, and exactly one of them is positive.
\end{proof}

\begin{remark}
In particular, when $\alpha$ is sufficiently large ($\alpha \gg |\tilde A_0|, |\tilde B_0|, |\tilde C_0| $ ), the terms involving $\alpha$ dominate the coefficients in~\eqref{eq:RLM-Q-quadratic}. In this limit,~\eqref{eq:RLM-Q-quadratic} reduces to $\alpha[(q^{n+1})^2 - (q^n)^2] = 0$, and the discriminant satisfies $\Delta_1=B_1^2 - 4A_1C_1 = 4\alpha^2(q^n)^2 > 0$, which means that Eq. \eqref{eq:RLM-Q-quadratic} is always solvable. 
Combined with Theorem~\ref{thm:RLM-energy-stability}, the RLM-Q-CN scheme is unconditionally energy-stable provided $\alpha$ is sufficiently large and $|\tilde A_0|$, $|\tilde B_0|$, and $|\tilde C_0|$ are bounded.
\end{remark}

\begin{remark}[Root Selection Criterion]
When the quadratic equation for $q^{n+1}$ admits two real roots, we select the root closest to $q^n$. This choice is justified by three considerations. At the continuous level, $q(t)$ evolves continuously with $q(0)=1$, so the discrete approximation $q^{n+1}$ should remain close to $q^n$ for small time steps. The criterion also ensures temporal stability and prevents spurious jumps in the scaling factor. Finally, for sufficiently large $\alpha$, the two roots approach $\pm q^n$, and selecting the root closest to $q^n$ naturally yields the positive root that maintains $q^{n+1} \approx 1$.
In particular, under the conditions of Corollary~\ref{cor:RLM-Q-unique}, exactly one of the two roots is positive. For sufficiently large $\alpha$, the two roots approach $\pm q^n$, so the closest-to-$q^n$ selection criterion picks precisely this positive root.
\end{remark}
\subsection{RLM with Prediction-Correction (RLM-PC)}\label{sec:rlm-PC}
In our second approach, we approximate $F^{n+1}$ using the predicted solution (obtained by setting $q=1$). This leads to a more easily solvable quadratic equation for $q^{n+1}$ than the RLM-Q method. We refer to this approach as RLM-PC (relaxed Lagrange multiplier with prediction-correction).

In particular, for a general bulk potential $F(\phi)$, we can update 
\beq \label{eq:F-PC}
F^{n+1} = F(\phi^{n+1,*}), \quad F^n=F(\phi^{n,*}), \qquad
\phi^{n+1,*}\coloneqq \phi_1^{n+1} + \phi_2^{n+1},\ \ \phi^{n,*}\coloneqq \phi_1^{n} + \phi_2^{n},
\eeq
in Eq.~\eqref{eq:RLM-CN-step3}. Here $\phi^{n+1,*}$ is the predicted state corresponding to setting $q^{n+1}=1$. The surrogate $F^n=F(\phi^{n,*})$ uses the predicted state $\phi^{n,*}=\phi_1^n+\phi_2^n$ (corresponding to $q=1$), formed from the affine fields of the previous step, which differs from $F(\phi^n)$ unless $q^n=1$. Since $q^n\approx 1$, the two agree to high order. To obtain a quadratic equation with explicit coefficients, we further approximate the inner-product term in~\eqref{eq:RLM-CN-step3} using the predicted increment $\phi^{n+1,*}-\phi^n$. Thus, Eq.~\eqref{eq:RLM-CN-step3} is simplified as
\begin{align}
& \int_\Omega \Big( F(\phi^{n+1,*}) - F(\phi^{n,*}) \Big) \,\diff\bx
+ \alpha \Big[ (q^{n+1})^2 - (q^n)^2 \Big] \notag \\
&\qquad = q^{n+\half} \int_\Omega f(\phibar^{n+\half}) \big(\phi^{n+1,*} - \phi^n\big)\,\diff\bx. \label{eq:RLM-CN-PC-algebra}
\end{align}
Then we need to solve another (but simpler) quadratic equation, which is
 \begin{equation}
 A_2 (q^{n+1})^2+B_2q^{n+1}+C_2=0, \label{eq:RLM-PC-quadratic}
 \end{equation}
 where the coefficients are
 \[
A_2  = \alpha, \quad
B_2  = -\frac{1}{2} \int_\Omega f(\phibar^{n+\half})(\phi^{n+1,*}-\phi^n)\,\diff\bx, \quad
C_2  = \int_\Omega \big(F^{n+1}-F^n \big) \,\diff\bx+q^nB_2-\alpha (q^n)^2,
\]
with $F^{n+1}$ and $F^n$ defined by Eq.~\eqref{eq:F-PC}.

\begin{remark}[Comparison of RLM-Q and RLM-PC]
The key difference between RLM-Q and RLM-PC lies in how $F^{n+1}$ (and the related inner-product term in~\eqref{eq:RLM-CN-step3}) is approximated. In the RLM-Q approach, we replace $F^{n+1}$ by a semi-implicit surrogate that is quadratic in $\phi^{n+1}$, with the remaining factor frozen at the extrapolation $\phibar^{n+1}$. This yields a quadratic equation for $q^{n+1}$ with coefficients that depend on the extrapolated values. In the RLM-PC approach, we first form the predicted state $\phi^{n+1,*}=\phi_1^{n+1}+\phi_2^{n+1}$ (corresponding to $q^{n+1}=1$), evaluate $F^{n+1}=F(\phi^{n+1,*})$, and approximate the inner-product term using the predicted increment $\phi^{n+1,*}-\phi^n$. This leads to the simpler quadratic equation~\eqref{eq:RLM-PC-quadratic}. In this sense, RLM-PC can be viewed as a predictor--correction for $q^{n+1}$, predicting with $q^{n+1}=1$ and then correcting by solving for $q^{n+1}$.
\end{remark}

\section{Numerical Results}\label{sec:numerical}
In this section, we apply the RLM framework to several classical phase-field models with homogeneous Neumann boundary conditions to illustrate the accuracy and efficiency of the proposed schemes. For spatial discretization, we use the discrete cosine transform method, which is second-order accurate and naturally enforces homogeneous Neumann boundary conditions.

\subsection{AC-type Equation with Double-Well Potential}\label{sec:AC-double-well}
The free energy of the system with the double-well potential is given as
\begin{equation} \label{eq:free-energy-AC}
E(\phi) = \int_\Omega \left( \frac{\varepsilon^2}{2} |\nabla \phi|^2 + F(\phi) \right) \,\diff\bx, \quad F(\phi) = \frac{1}{4}(\phi^2-1)^2.
\end{equation}
where $\phi$ is the phase-field variable and $\varepsilon$ is the interfacial thickness parameter. Note that $f(\phi) = F'(\phi) = \phi(\phi^2 - 1)$. 

Based on the RLM reformulation in Section~\ref{sec:rlm-methods}, the Allen--Cahn-type equation with the double-well potential is given by
\begin{align}
& \partial_t \phi = -M \mu, \label{eq:AC-double-well-phi} \\
& \mu = -\varepsilon^2 \Delta \phi + q(t) \phi(\phi^2 - 1), \label{eq:AC-double-well-mu} \\
& \frac{d}{dt} \int_\Omega F(\phi) \,\diff\bx + \alpha \,\frac{d \big(q(t)\big)^2 }{dt}= \int_\Omega q(t) \phi(\phi^2 - 1) \partial_t \phi \,\diff\bx, \label{eq:AC-double-well-q}
\end{align}
where $M > 0$ is the mobility constant and $q(0)=1$.

For the RLM-Q scheme, we set $F^{n+1} = \frac{1}{4}(\phi^{n+1} \phibar^{n+1} - 1)^2$. We assess temporal convergence, energy accuracy, and computational efficiency.

In the first example, we verify the convergence rates of the proposed RLM-Q and RLM-PC schemes for the Allen--Cahn equation. We set the domain as $\Omega=[0,1]^2$ and use the initial condition
\[
\phi_0(x,y)=\cos(2\pi x)\cos(2\pi y).
\]
We take the interface thickness $\varepsilon=10^{-2}$, $\alpha=1\times 10^{-2}$, and the mobility coefficient $M=1$.

For the temporal convergence test, we compute the Cauchy difference $\phi_k-\phi_{k-1}$ for $k=1,2,\ldots,7$ in the $L^2$ norm at final time $T_{\mathrm{final}}=0.1$ with $\Delta t=0.1/2^{k-1}$ and spatial step size $h=1/256$. We compute the convergence order by
$
\text{order}=\log_2 \frac{L^2(k-1)}{L^2(k)}$, where $L^2(k):=\|\phi_k-\phi_{k-1}\|_2$ with the discrete norm $\|v\|_2=(h^2\sum_{i,j}v_{ij}^2)^{1/2}.$ The same definition applies to the Cahn--Hilliard convergence test in Section~\ref{sec:CH-double-well}.

We perform convergence tests for the RLM-Q-BDF1, RLM-Q-CN, and RLM-PC-CN schemes. The results are summarized in Table~\ref{tab:CRofRLM-AC}.
From Table~\ref{tab:CRofRLM-AC}, we observe that the RLM-Q-BDF1 scheme achieves first-order temporal accuracy, as expected for a first-order BDF scheme. Moreover, both the RLM-Q-CN and RLM-PC-CN schemes achieve approximately second-order temporal accuracy, consistent with Crank--Nicolson-type schemes.

\begin{table}[H]
\centering
\caption{Temporal convergence test for the Allen--Cahn equation with the double-well potential: Cauchy-difference $L^2$ errors and observed convergence orders ($\varepsilon=10^{-2}$, $\alpha=10^{-2}$, $M=1$, $h=1/256$, $T_{\mathrm{final}}=0.1$).}
\begin{subtable}{0.48\textwidth}
\centering
\subcaption{RLM-Q-BDF1 scheme}
\begin{tabular}{c c r c}
\toprule
Coarse $\Delta t$ & Fine $\Delta t$ & $L^2$ error & Order \\
\midrule
0.1 & $0.1/2$ & $1.04\times 10^{-2}$ & -- \\
$0.1/2$ & $0.1/2^2$ & $5.23\times 10^{-3}$ & 0.985 \\
$0.1/2^2$ & $0.1/2^3$ & $2.63\times 10^{-3}$ & 0.992 \\
$0.1/2^3$ & $0.1/2^4$ & $1.32\times 10^{-3}$ & 0.996 \\
$0.1/2^4$ & $0.1/2^5$ & $6.60\times 10^{-4}$ & 0.998 \\
$0.1/2^5$ & $0.1/2^6$ & $3.30\times 10^{-4}$ & 0.999 \\
\bottomrule
\end{tabular}
\label{tab:CRofRLM-Q-BDF1_AC}
\end{subtable}

\vspace{1em}
\begin{minipage}{0.48\textwidth}
\begin{subtable}{\textwidth}
\centering
\subcaption{RLM-Q-CN scheme}
\begin{tabular}{c c r c}
\toprule
Coarse $\Delta t$ & Fine $\Delta t$ & $L^2$ error & Order \\
\midrule
0.1 & $0.1/2$ & $1.10\times 10^{-3}$ & -- \\
$0.1/2$ & $0.1/2^2$ & $2.91\times 10^{-4}$ & 1.92 \\
$0.1/2^2$ & $0.1/2^3$ & $7.48\times 10^{-5}$ & 1.96 \\
$0.1/2^3$ & $0.1/2^4$ & $1.90\times 10^{-5}$ & 1.98 \\
$0.1/2^4$ & $0.1/2^5$ & $4.78\times 10^{-6}$ & 1.99 \\
$0.1/2^5$ & $0.1/2^6$ & $1.20\times 10^{-6}$ & 1.99 \\
\bottomrule
\end{tabular}
\label{tab:CRofRLM-Q_AC}
\end{subtable}
\end{minipage}
\begin{minipage}{0.48\textwidth}
\begin{subtable}{\textwidth}
\centering
\subcaption{RLM-PC-CN scheme}
\begin{tabular}{c c r c}
\toprule
Coarse $\Delta t$ & Fine $\Delta t$ & $L^2$ error & Order \\
\midrule
0.1 & $0.1/2$ & $6.99\times 10^{-4}$ & -- \\
$0.1/2$ & $0.1/2^2$ & $1.80\times 10^{-4}$ & 1.96 \\
$0.1/2^2$ & $0.1/2^3$ & $4.57\times 10^{-5}$ & 1.98 \\
$0.1/2^3$ & $0.1/2^4$ & $1.15\times 10^{-5}$ & 1.99 \\
$0.1/2^4$ & $0.1/2^5$ & $2.89\times 10^{-6}$ & 1.99 \\
$0.1/2^5$ & $0.1/2^6$ & $7.23\times 10^{-7}$ & 2.00 \\
\bottomrule
\end{tabular}
\label{tab:CRofRLM-PC-CN_AC}
\end{subtable}
\end{minipage}
\label{tab:CRofRLM-AC}
\end{table}

Next, we present dynamic simulations of the Allen--Cahn equation using the proposed RLM-CN schemes and compare them with the SAV-CN scheme. We set the computational domain as $\Omega=[0,1]^2$, the interface thickness $\varepsilon=10^{-2}$, the time step size $\Delta t=10^{-3}$, the mobility coefficient $M=1$, the constant $\Csav=1\times 10^3$ in the SAV scheme, and the spatial step size $h=1/128$. The initial condition is given by
\begin{equation}
\phi_0(x,y) = 
\tanh\left(\frac{0.2-r}{\delta_0}\right), \quad r=\sqrt{(x-0.5)^2+(y-0.5)^2}, \quad \delta_0=0.01.
\end{equation}

We apply the SAV-CN, RLM-Q-CN, and RLM-PC-CN schemes and compare snapshots of the phase variable at $T=24, 120, 192, 240$. Figure~\ref{fig:AC-snapshots-CN} summarizes the results. The three schemes produce visually indistinguishable snapshots at these times.

\begin{figure}[H]
\centering
\renewcommand{\arraystretch}{0.9}
\setlength{\tabcolsep}{2pt}
\begin{tabular}{c c c c c}
\toprule
{{\footnotesize Method}} & $T=24$ & $T=120$ & $T=192$ & $T=240$ \\
\midrule
{\footnotesize SAV} &
\snapshot{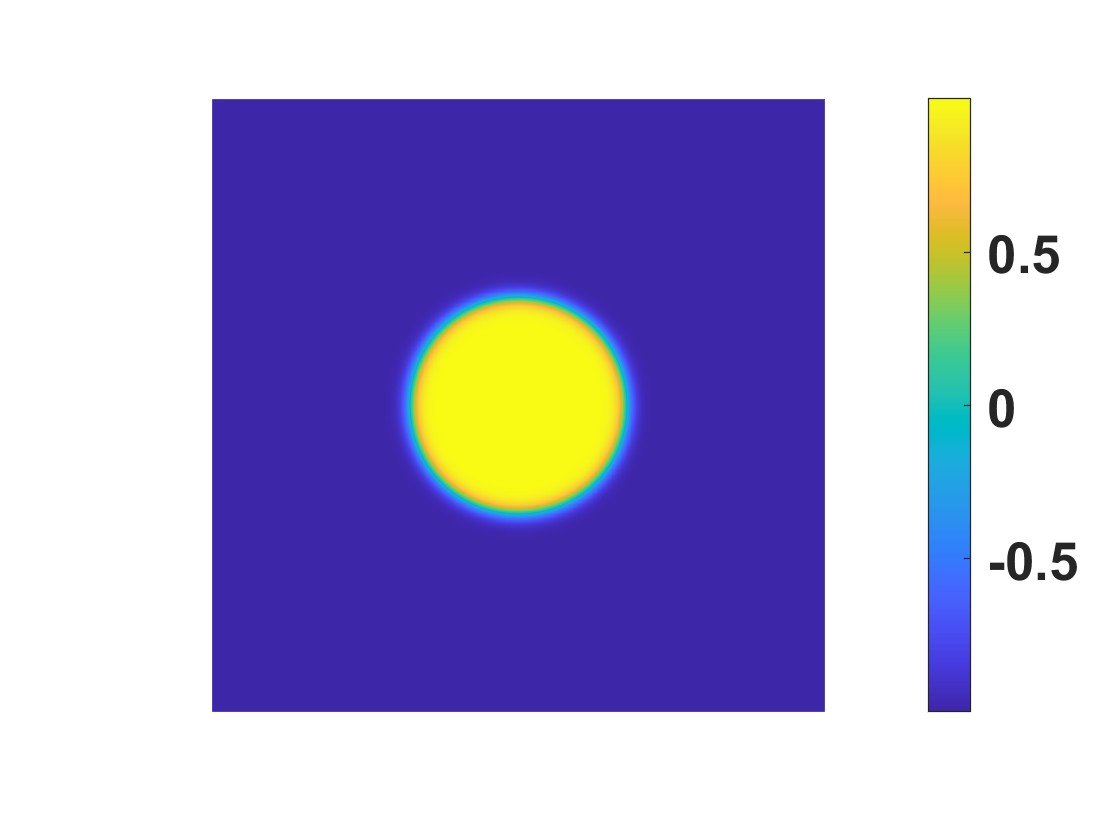} &
\snapshot{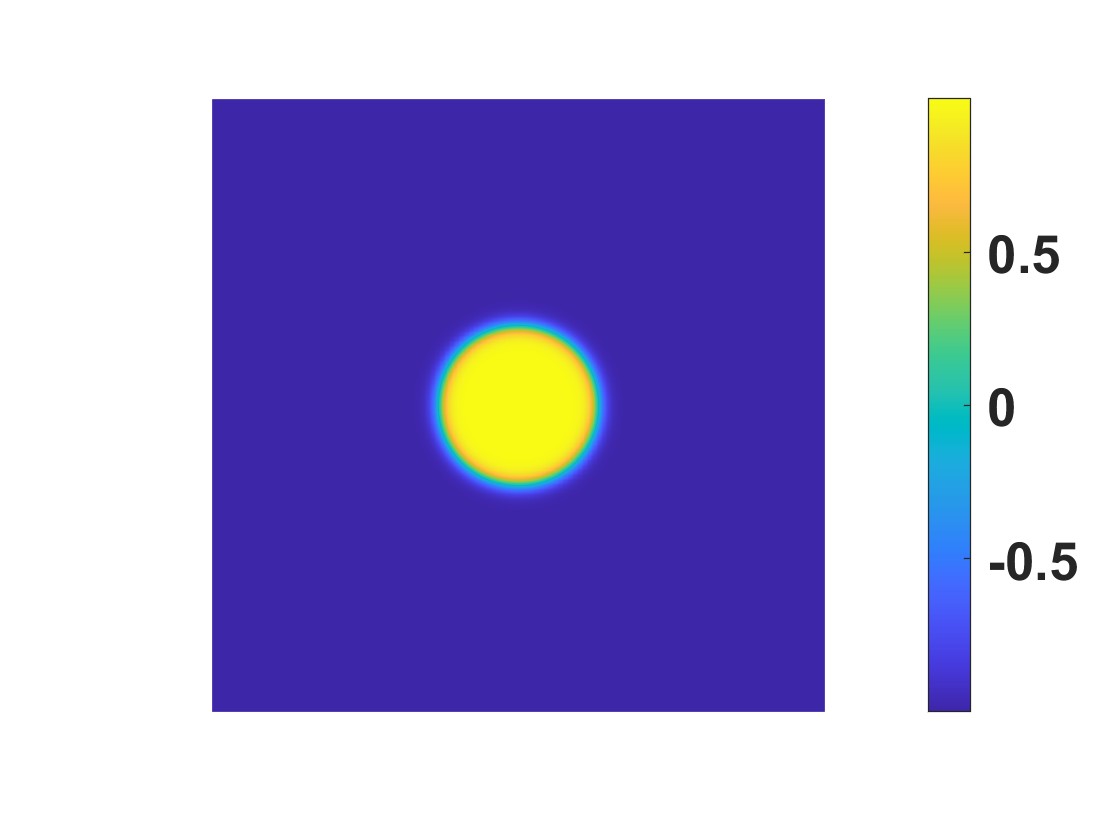} &
\snapshot{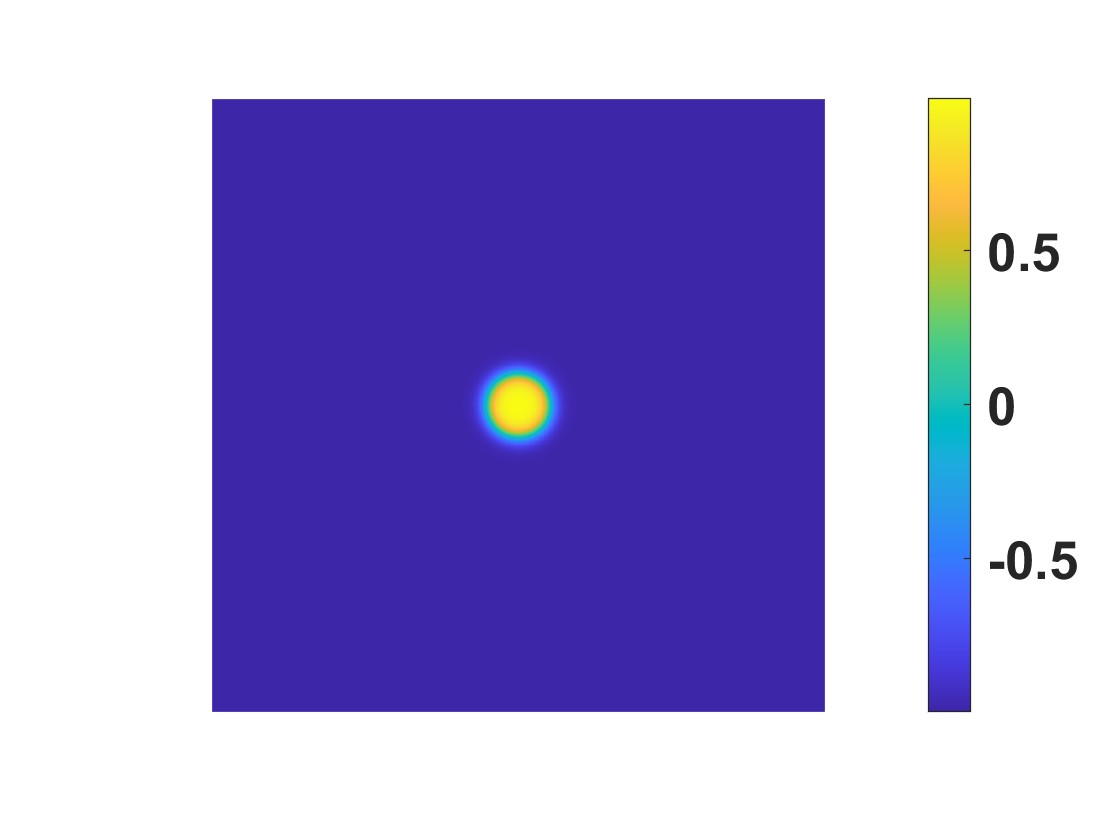} &
\snapshot{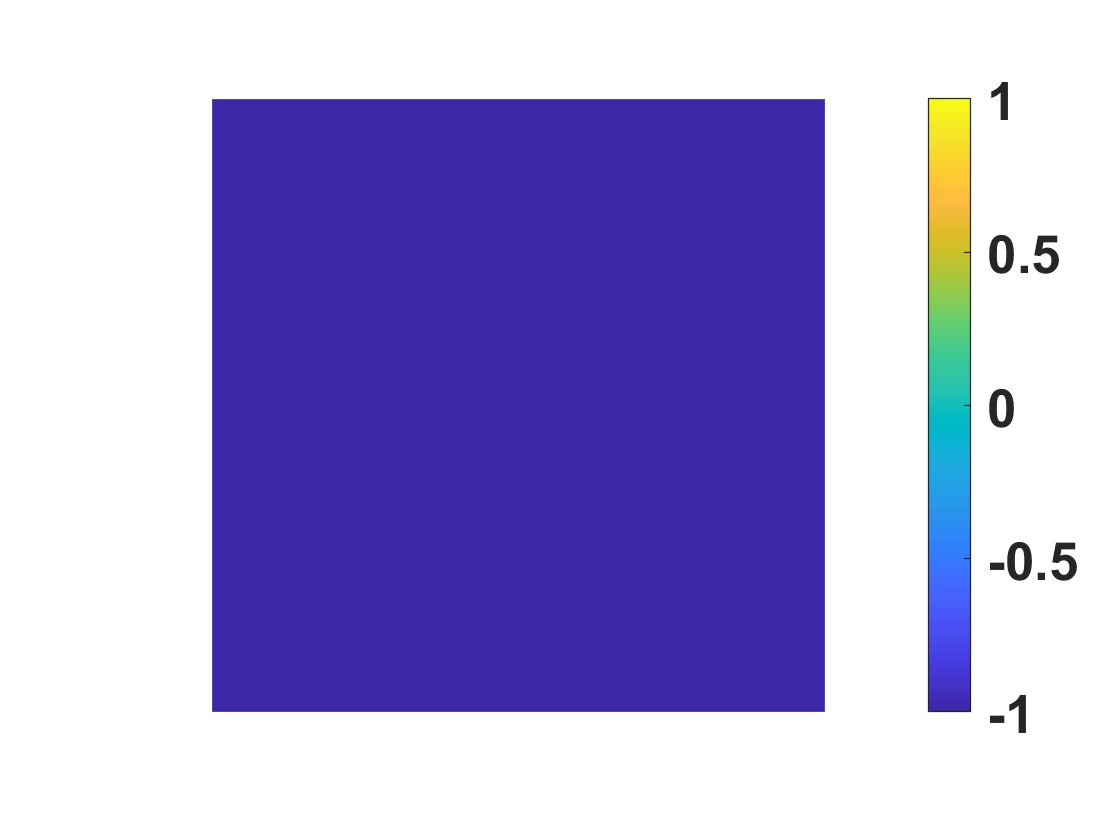} \\
\midrule
{\footnotesize RLM-Q} &
\snapshot{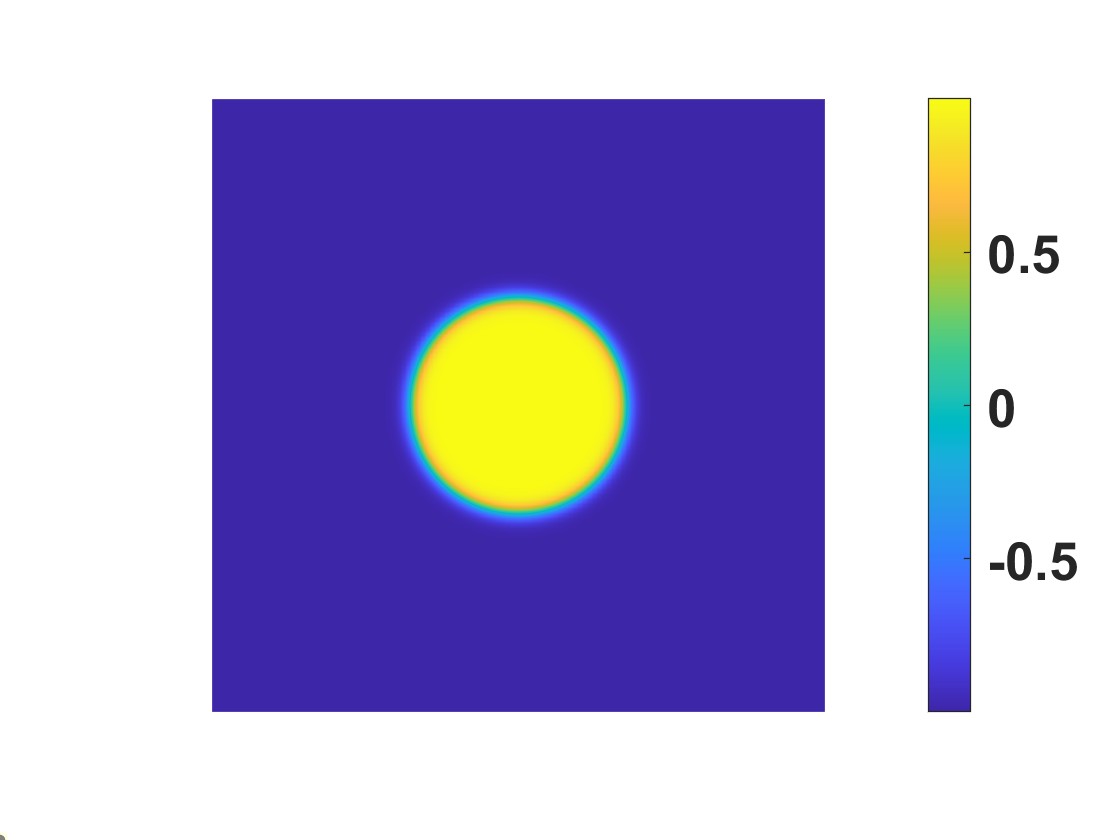} &
\snapshot{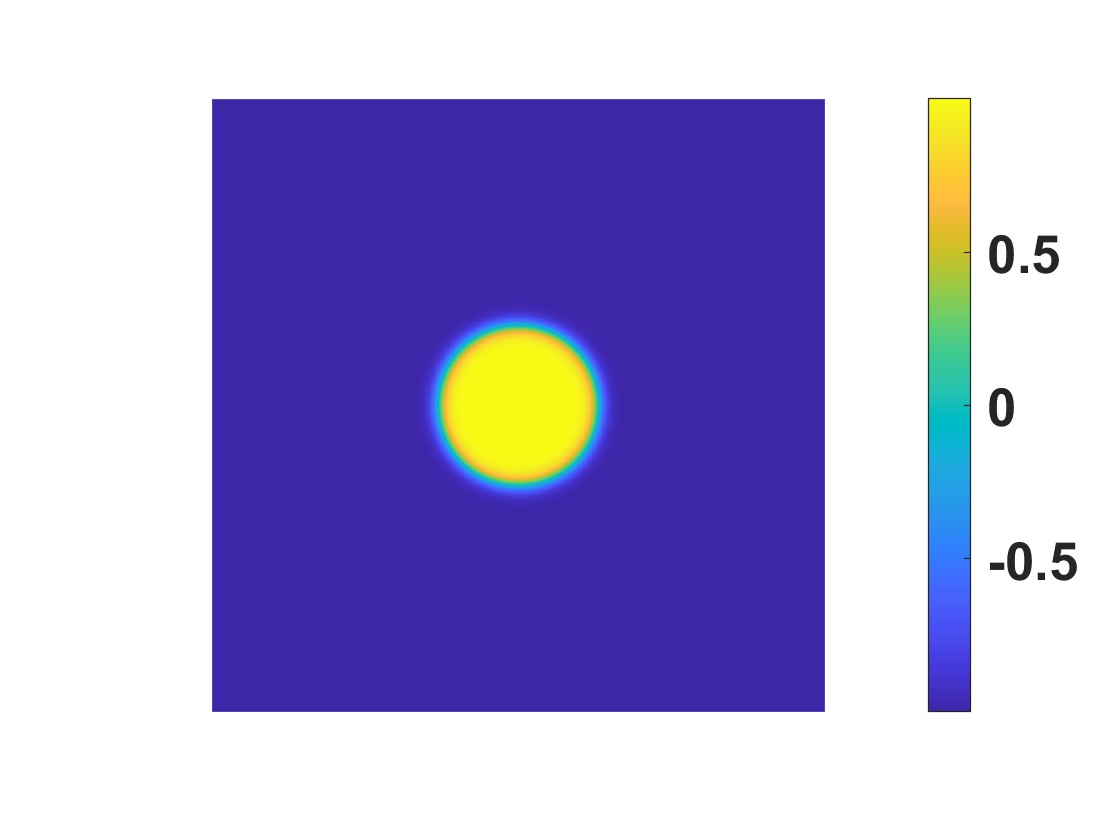} &
\snapshot{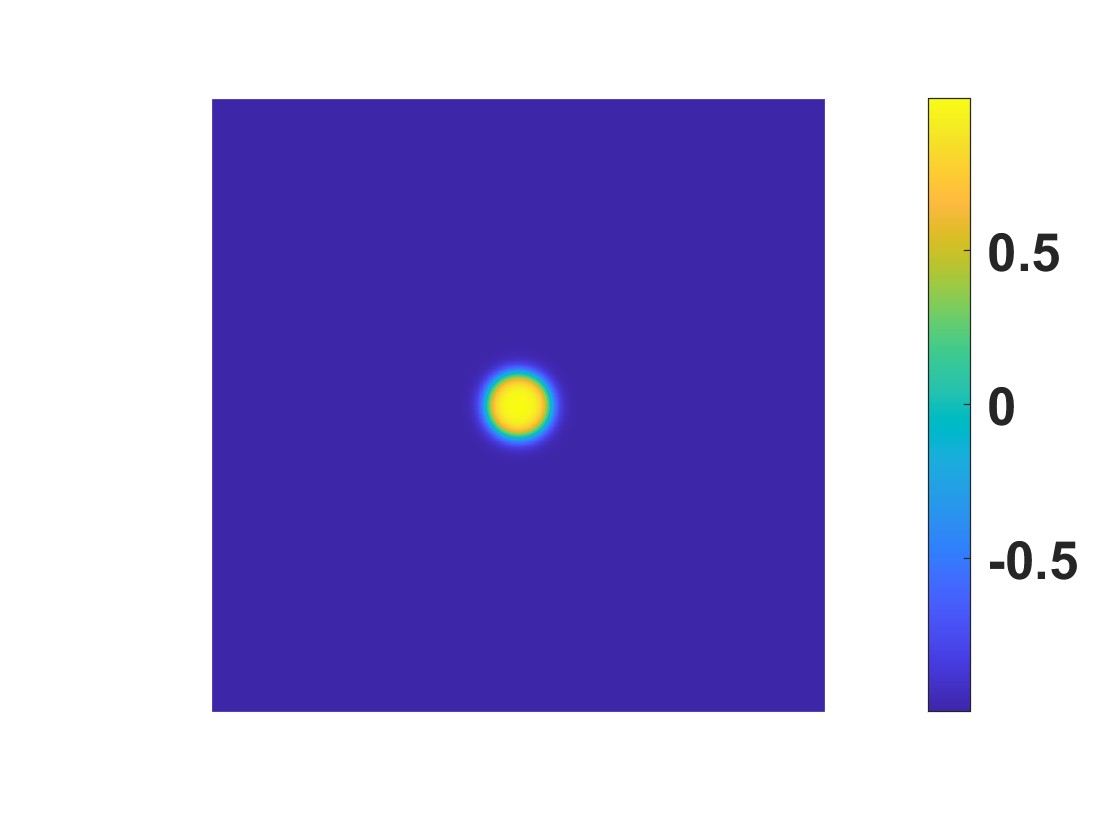} &
\snapshot{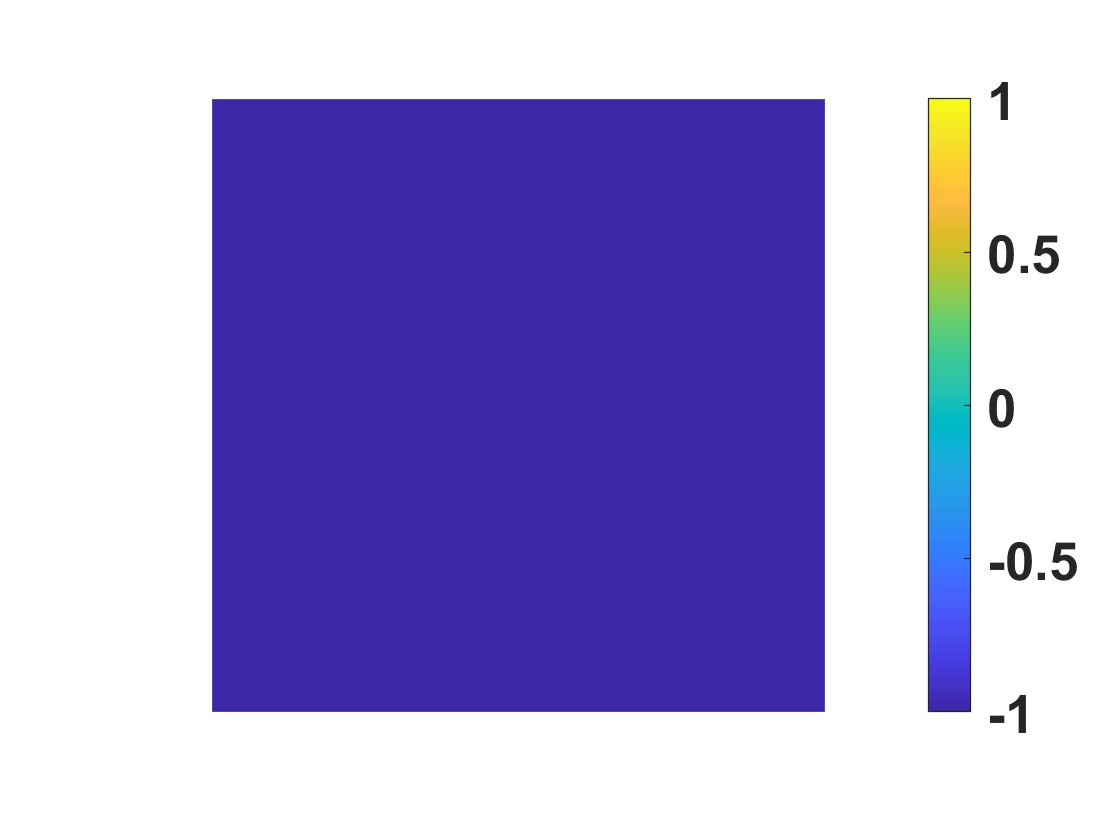} \\
\midrule
{\footnotesize RLM-PC} &
\snapshot{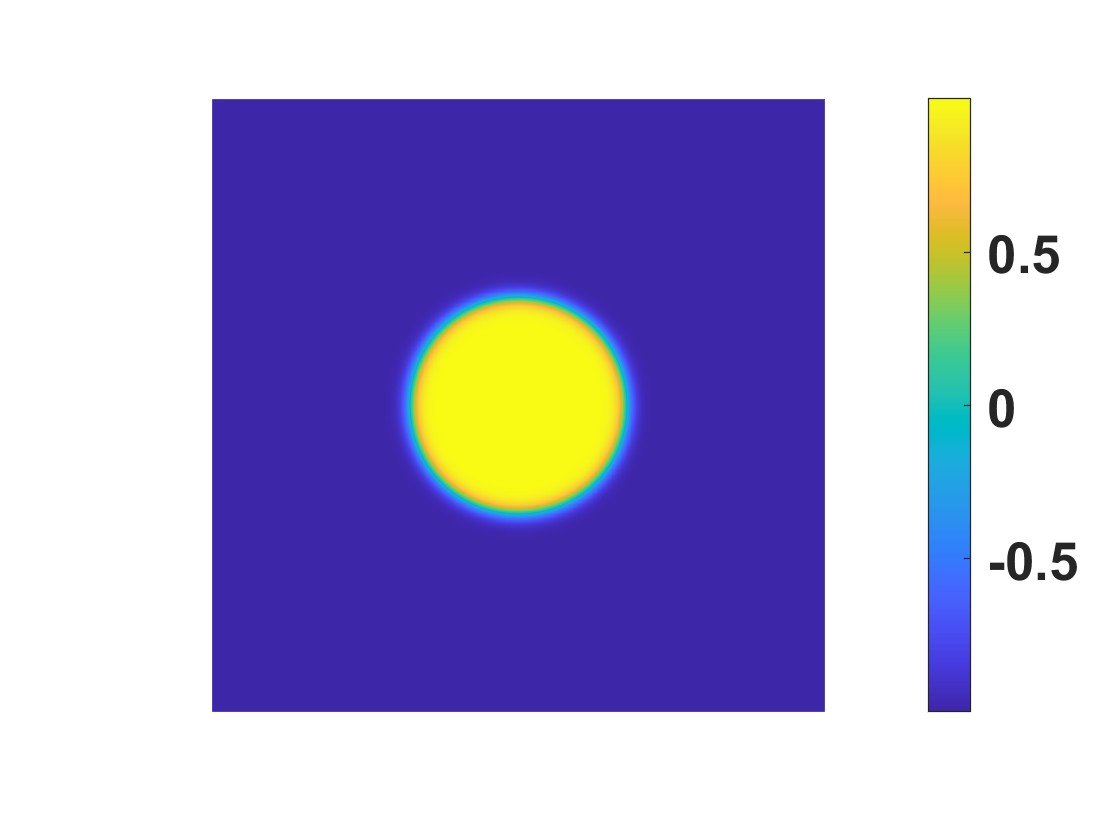} &
\snapshot{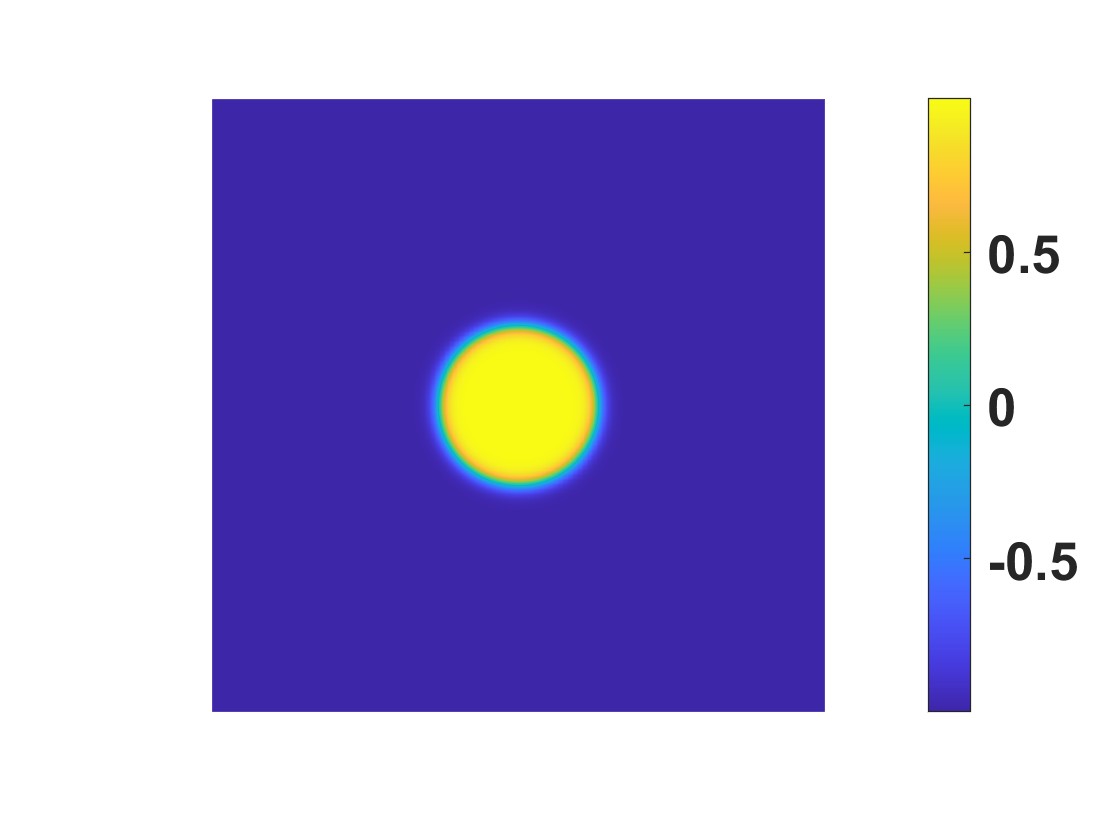} &
\snapshot{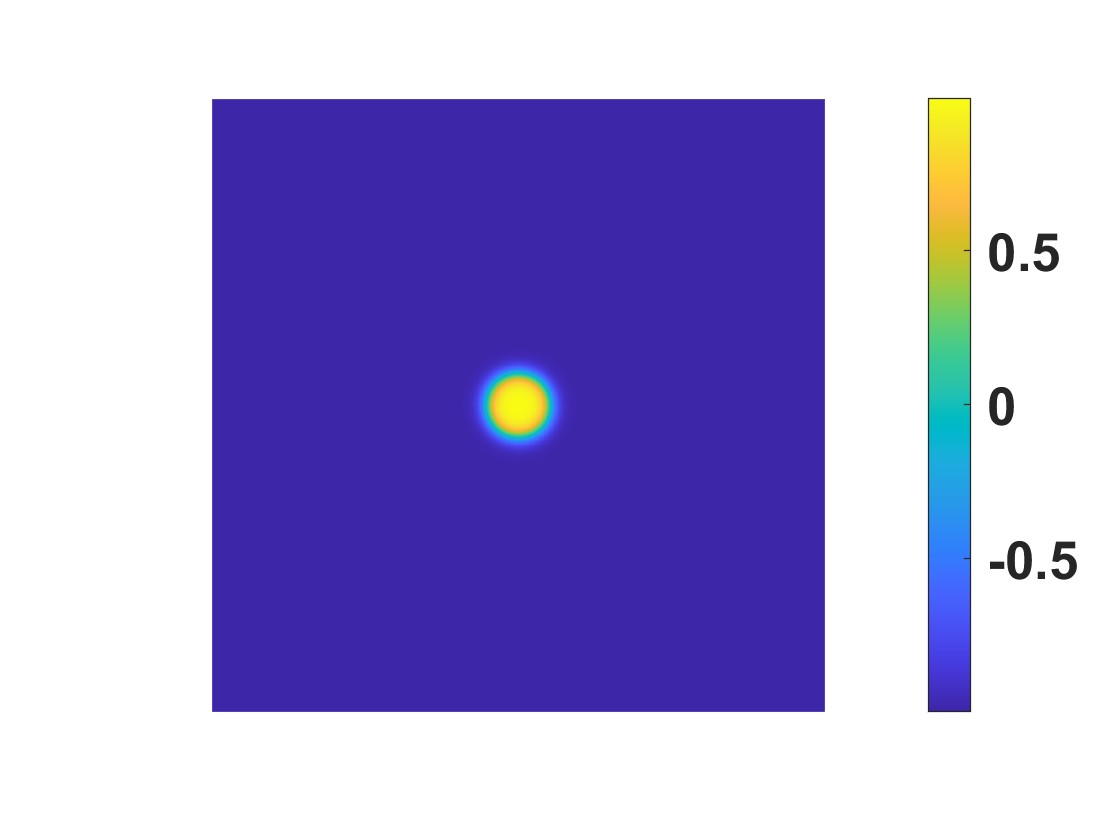} &
\snapshot{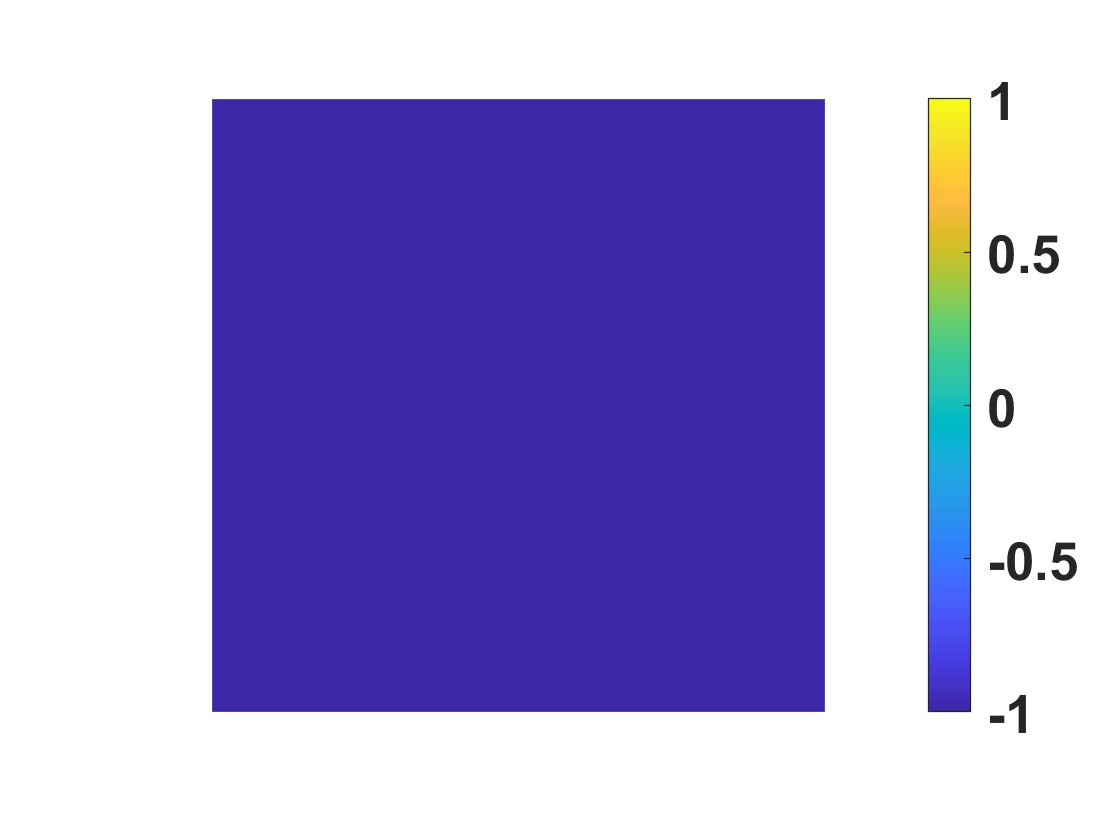} \\
\bottomrule
\end{tabular}
\caption{Snapshots of the phase variable $\phi$ for the Allen--Cahn equation at $T=24, 120, 192, 240$. Each row corresponds to a different scheme: SAV-CN (top), RLM-Q-CN with $\alpha=10^{-2}$ (middle), and RLM-PC-CN with $\alpha=10^{-2}$ (bottom). Each column shows results at the same time point.}
\label{fig:AC-snapshots-CN}
\end{figure}

We conduct quantitative comparisons between these schemes, with the results summarized in Figure~\ref{fig:AC_Energy_alpha_RLM}.
In Figure~\ref{fig:AC_Energy_alpha_RLM}(\subref{subfig:AC_energy_a}), we observe that the relaxed original energies of RLM-Q-CN and RLM-PC-CN (denoted $E_{\text{RLM}}$ in the numerical results) with $\alpha=10^{-2}$ are almost identical to those of SAV-CN ( denoted $E_{\text{SAV}}$, which is the benchmark).  The time evolutions of $q$ for four cases are shown in Figure~\ref{fig:AC_Energy_alpha_RLM}(\subref{subfig:AC_energy_b}). We find that decreasing $\alpha$ increases the deviation of $q$ from 1. The results indicate that $q$ is very close to 1 with $\alpha=10^{-2}$, while a smaller $\alpha$ may induce a larger deviation of $q$ from 1 for both RLM schemes. When $\alpha=10^{-4}$, the quadratic equation for RLM-Q-CN does not have a real solution, while RLM-PC-CN remains solvable, indicating that RLM-PC-CN is more robust.

To quantify the differences among RLM, SAV, and original energies, we define two classes of energy differences. The first class, $\tilde{E}  -E_{\text{SAV}}$, compares the relaxed original energies ($\tilde{E}$) with the benchmark energy ($E_{\text{SAV}}$). The second class, $\tilde{E} - E=\alpha (q^2-1)$, compares the relaxed original energy $\tilde{E} $ (as defined in Section~\ref{sec:rlm-methods}) with the original energy $E$. Figure~\ref{fig:AC_Energy_alpha_RLM}(\subref{subfig:AC_energy_c}) indicates that the relaxed original energies $\tilde{E}$ are closer to the benchmark energy with a larger $\alpha$. Meanwhile, a larger $\alpha$ also promotes the accuracy of the original energy $\alpha (q^2-1)$ in Figure~\ref{fig:AC_Energy_alpha_RLM}(\subref{subfig:AC_energy_d}). This is because a larger $\alpha$ improves the stability of $q$. For example, $q$ is very close to 1 when $\alpha=10^{-2}$ in Figure~\ref{fig:AC_Energy_alpha_RLM}(\subref{subfig:AC_energy_b}).

\begin{figure}[H]
\centering
\subfig{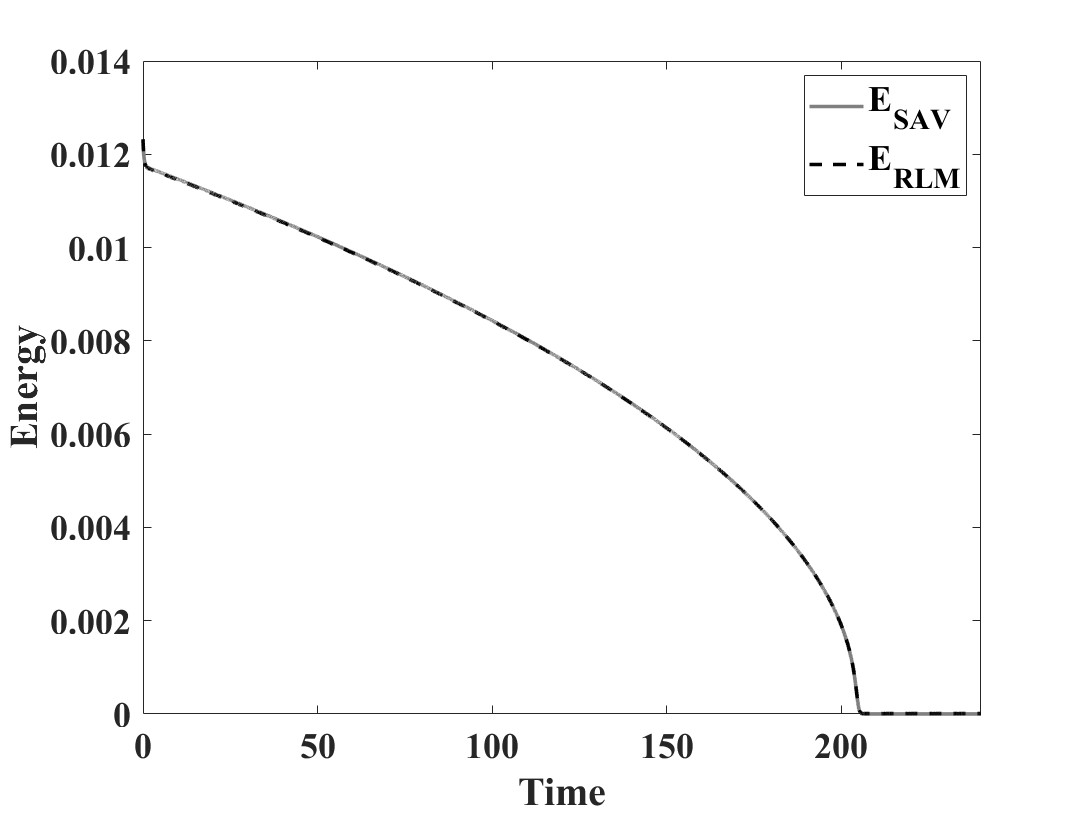}{Comparison of the energies}{subfig:AC_energy_a}
\subfig{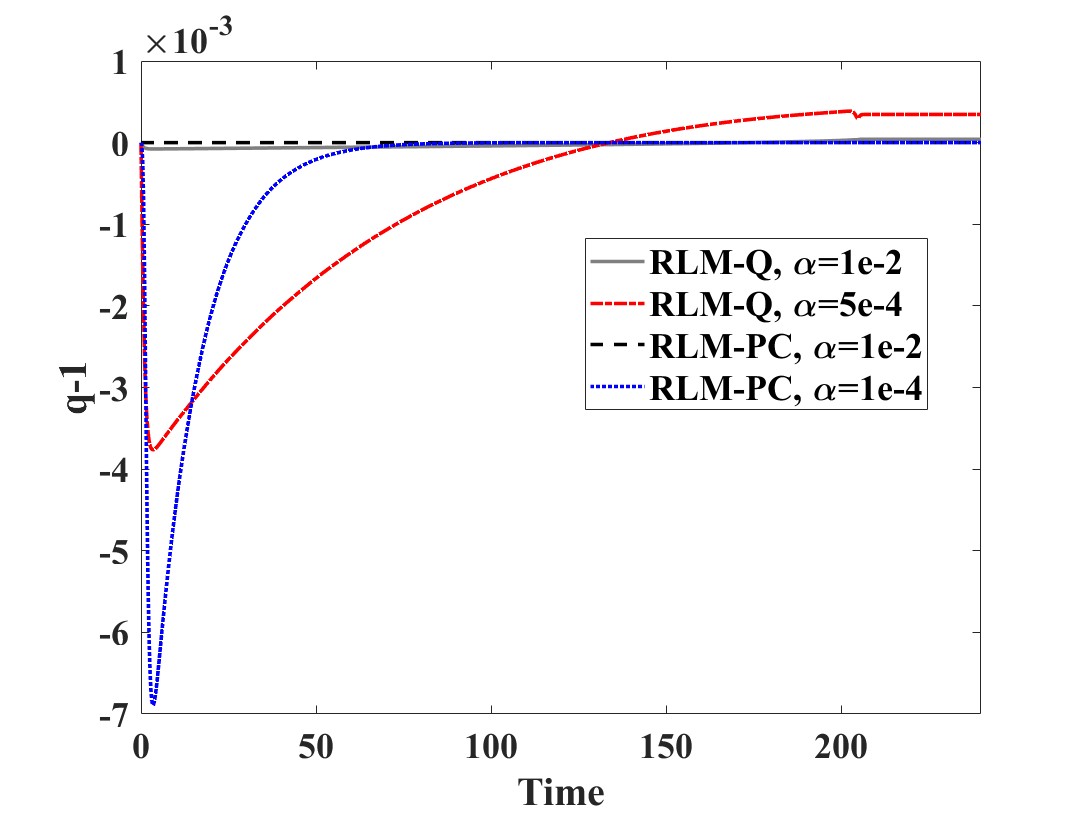}{Scaling factor $q$}{subfig:AC_energy_b}
\subfig{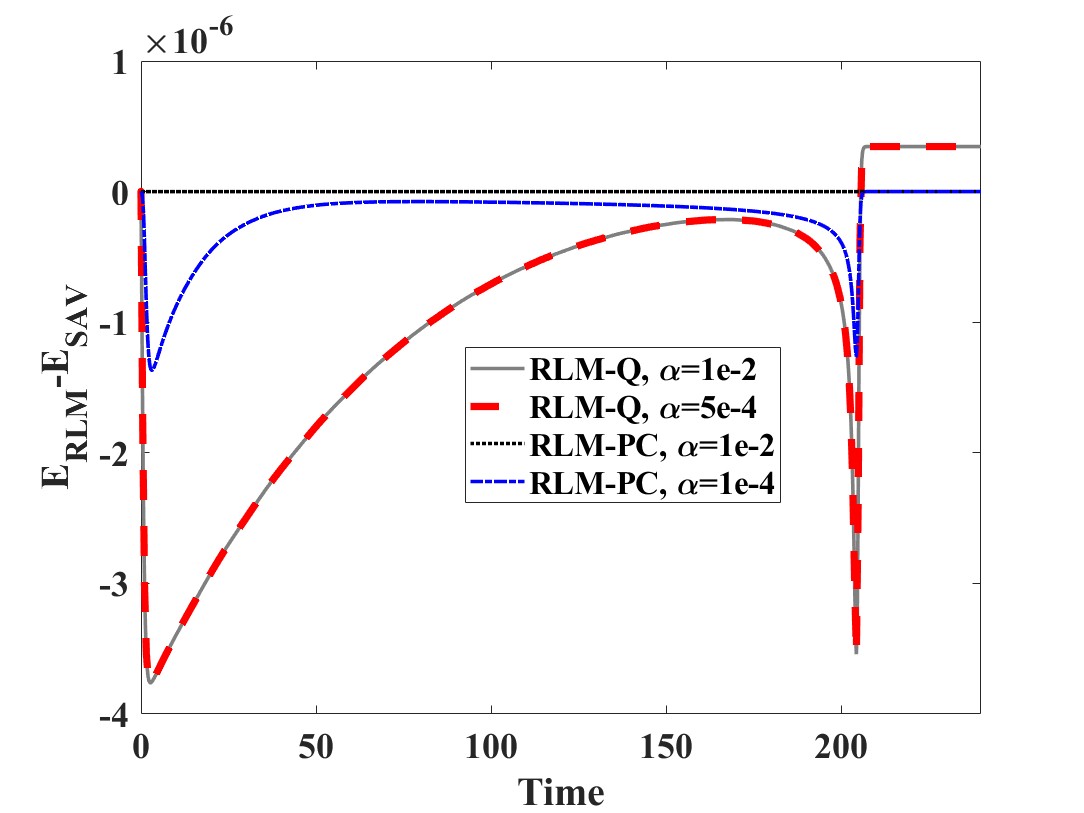}{First class of energy differences}{subfig:AC_energy_c}
\subfig{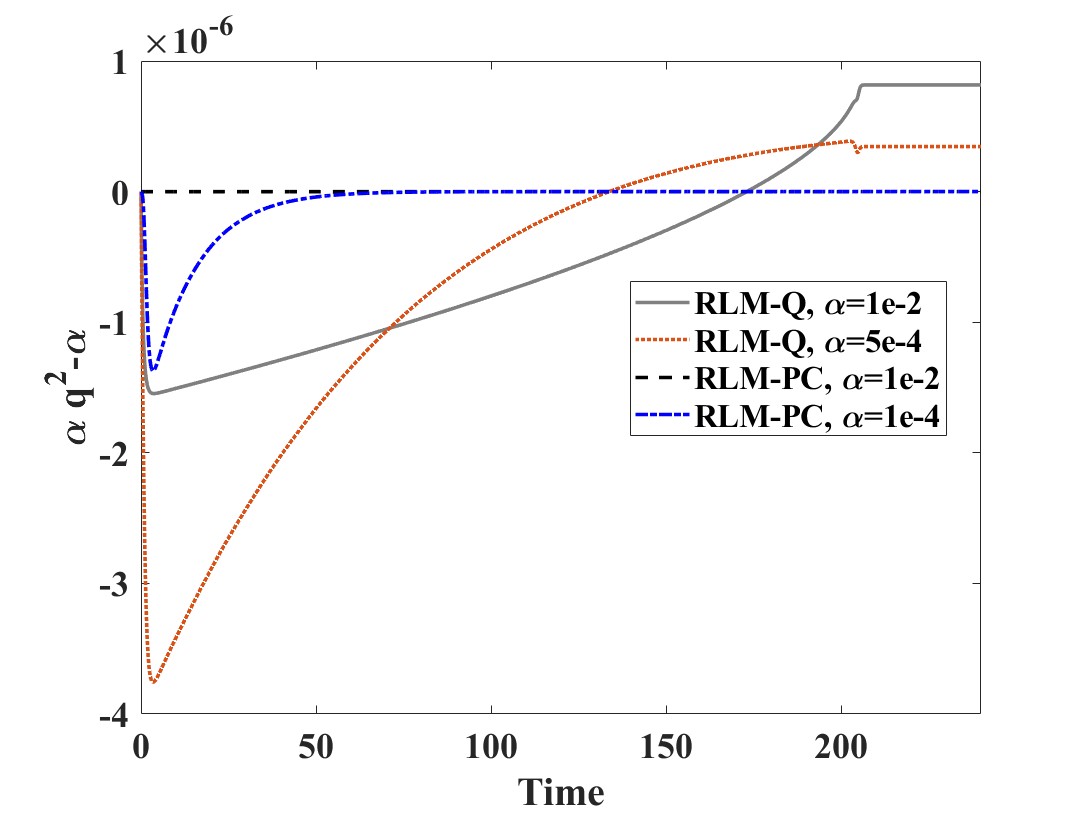}{Second class of energy differences}{subfig:AC_energy_d}
\caption{Time evolution of the energy, scaling factor $q$, and energy differences for the Allen--Cahn equation. (\subref{subfig:AC_energy_a})~Energies of SAV-CN, RLM-Q-CN (all with $\alpha=10^{-2}$). (\subref{subfig:AC_energy_b})~Scaling factor $q$ for RLM-Q ($\alpha=10^{-2}$, $5\times10^{-4}$) and RLM-PC ($\alpha=10^{-2}$, $1\times10^{-4}$). (\subref{subfig:AC_energy_c})~First class of energy differences $\tilde{E}-E_{\text{SAV}}$. (\subref{subfig:AC_energy_d})~Second class of energy differences $\alpha (q^2-1)$.}
\label{fig:AC_Energy_alpha_RLM}
\end{figure}

To further investigate the robustness of the RLM methods, we next examine their accuracy with sufficiently large values of $\alpha$. The relaxed original energy ($\tilde{E}$) evolutions of the RLM-Q-CN and RLM-PC-CN schemes with $\alpha=10^{-2}$ and $\alpha=10^3$ are shown in panels (\subref{subfig:AC_alpha_a}) and (\subref{subfig:AC_alpha_b}) of Figure~\ref{fig:AC_alpha_RLM}. For both RLM-Q-CN and RLM-PC-CN, the energy dissipation rates with $\alpha=10^{-2}$ and $\alpha=10^3$ are identical. Figure~\ref{fig:AC_alpha_RLM}(\subref{subfig:AC_alpha_c}) and (\subref{subfig:AC_alpha_d}) show that RLM-PC-CN is more robust than RLM-Q-CN with $\alpha=10^{-2}$ and $\alpha=10^{3}$.

\begin{figure}[H]
\centering
\subfig{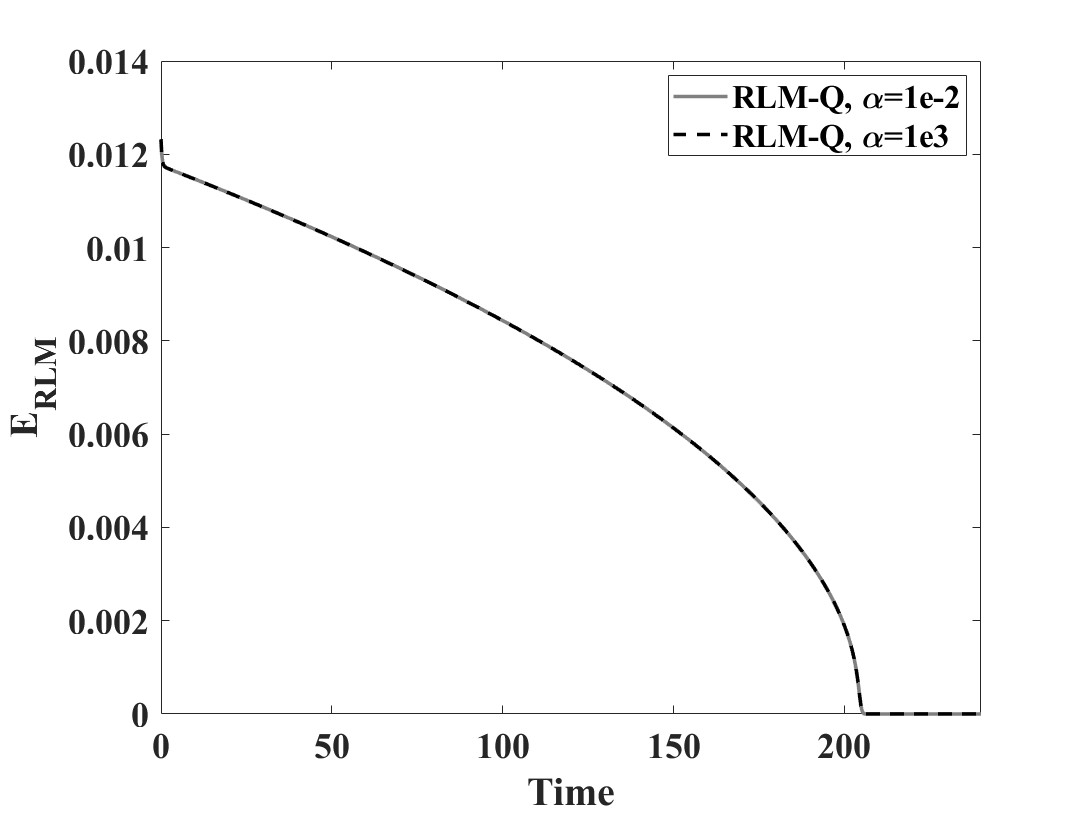}{RLM-Q}{subfig:AC_alpha_a}
\subfig{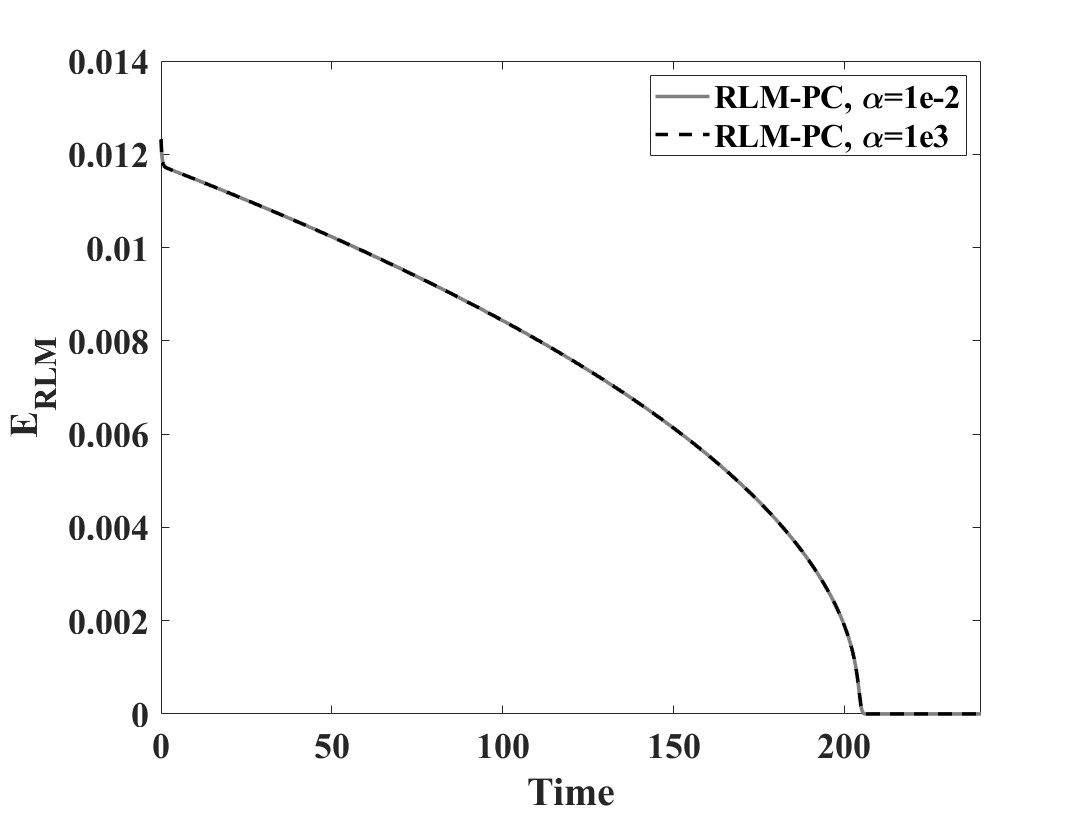}{RLM-PC}{subfig:AC_alpha_b}

\subfig{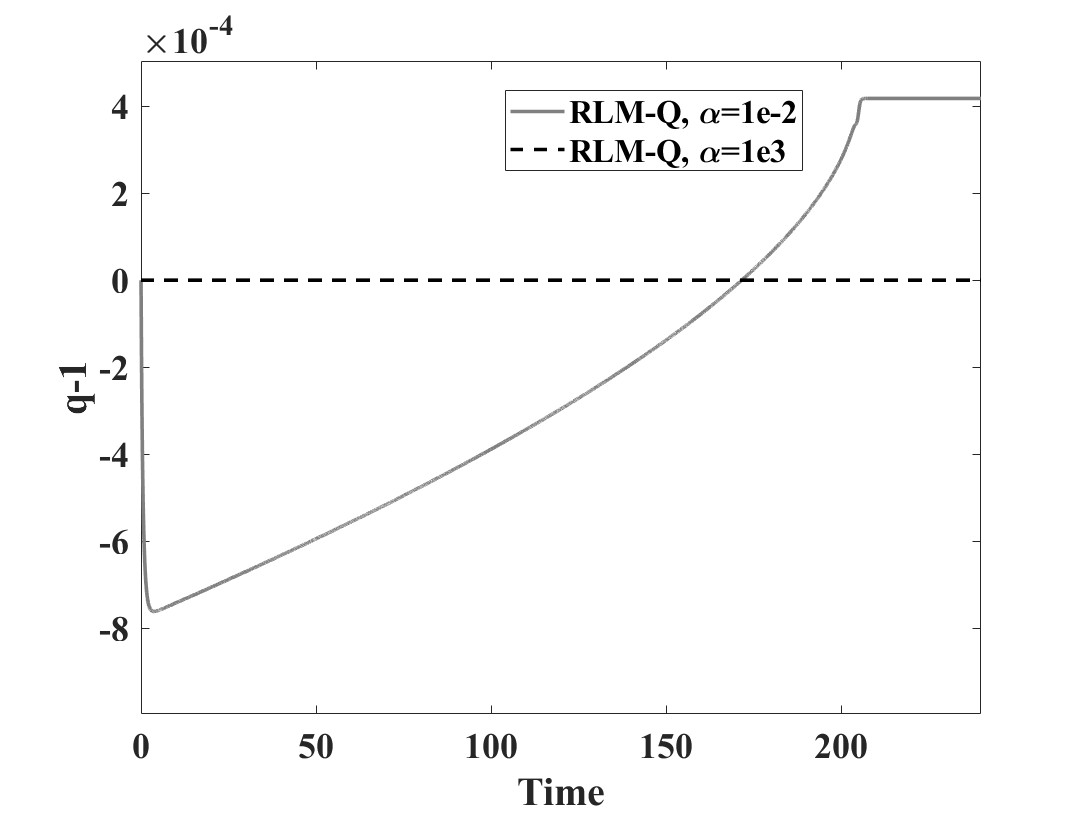}{Scaling factor $q$ ($\alpha=10^{-2}$)}{subfig:AC_alpha_c}
\subfig{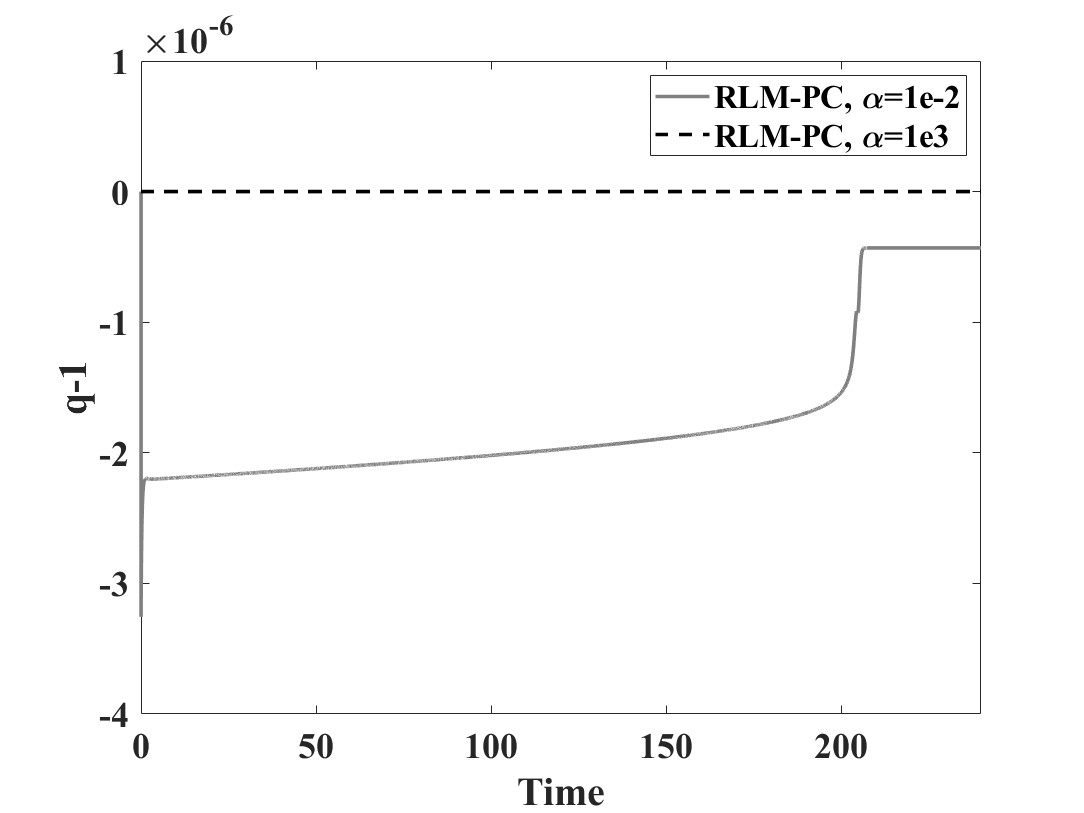}{Scaling factor $q$ ($\alpha=10^{3}$)}{subfig:AC_alpha_d}
\caption{Time evolution of the relaxed original energy $\tilde{E}$ and scaling factor $q$ for the Allen--Cahn equation with $\alpha=10^{-2}$ and $10^{3}$. (\subref{subfig:AC_alpha_a}) RLM-Q: the energy curves for $\alpha=10^{-2}$ coincide with those of  $\alpha=10^3$. (\subref{subfig:AC_alpha_b}) RLM-PC: the energy curves for $\alpha=10^{-2}$ are also the same as those of  $\alpha=10^3$. (\subref{subfig:AC_alpha_c}) and (\subref{subfig:AC_alpha_d}) show the time evolutions of $q$ for the two RLM methods with $\alpha=10^{-2}$ and $\alpha=10^{3}$.}
\label{fig:AC_alpha_RLM}
\end{figure}

\subsection{CH-type Equation with Double-Well Potential}\label{sec:CH-double-well}
Having validated the RLM schemes for the Allen--Cahn equation, we now apply the same framework to the Cahn--Hilliard equation with the same double-well potential to demonstrate the performance of RLM for mass-conserving gradient flows.

The free energy is the same double-well potential as in~\eqref{eq:free-energy-AC}.
Based on the RLM reformulation in Section~\ref{sec:rlm-methods}, the Cahn--Hilliard-type equation with this potential is given by
\begin{align}
& \partial_t \phi = M \Delta \mu, \label{eq:CH-double-well-phi} \\
& \mu = -\varepsilon^2 \Delta \phi + q(t) \phi(\phi^2 - 1), \label{eq:CH-double-well-mu} \\
& \frac{d}{dt} \int_\Omega F(\phi) \,\diff\bx + \alpha \,\frac{d \big(q(t)\big)^2 }{dt}= \int_\Omega q(t) \phi(\phi^2 - 1) \partial_t \phi \,\diff\bx, \label{eq:CH-double-well-q}
\end{align}
where $M > 0$ is the mobility constant and $q(0)=1$.

Using the same RLM-Q approximation as in Section~\ref{sec:AC-double-well}, we set $F^{n+1} = \frac{1}{4}(\phi^{n+1} \phibar^{n+1} - 1)^2$. We assess temporal convergence, energy accuracy, and computational efficiency.

\[
\phi_0(x,y)=\cos(2\pi x)\cos(2\pi y).
\]
We take the interface thickness $\varepsilon=10^{-2}$, $\alpha=1\times 10^{-2}$, and the mobility coefficient $M=5\times 10^{-4}$.

For the temporal convergence test, we compute the Cauchy difference $\phi_k-\phi_{k-1}$ for $k=1,2,\ldots,7$ in the $L^2$ norm at final time $T_{\mathrm{final}}=0.1$ with $\Delta t=0.1/2^{k-1}$ and spatial step size $h=1/256$. We compute the convergence order by
$
\text{order}=\log_2 \frac{L^2(k-1)}{L^2(k)}.
$
We perform convergence tests for the RLM-Q-BDF1, RLM-Q-CN, and RLM-PC-CN schemes. The results are summarized in Table~\ref{tab:CRofRLM-CH}.
From Table~\ref{tab:CRofRLM-CH}, we observe that the RLM-Q-BDF1 scheme achieves first-order temporal accuracy, while both the RLM-Q-CN and RLM-PC-CN schemes achieve approximately second-order temporal accuracy.

\begin{table}[H]
\centering
\caption{Temporal convergence test for the Cahn--Hilliard equation with the double-well potential: Cauchy-difference $L^2$ errors and observed convergence orders ($\varepsilon=10^{-2}$, $\alpha=10^{-2}$, $M=5\times10^{-4}$, $h=1/256$, $T_{\mathrm{final}}=0.1$).}
\begin{subtable}{0.48\textwidth}
\centering
\subcaption{RLM-Q-BDF1 scheme}
\begin{tabular}{c c r c}
\toprule
Coarse $\Delta t$ & Fine $\Delta t$ & $L^2$ error & Order \\
\midrule
0.1 & $0.1/2$ & $1.42\times 10^{-3}$ & -- \\
$0.1/2$ & $0.1/2^2$ & $7.15\times 10^{-4}$ & 0.9981 \\
$0.1/2^2$ & $0.1/2^3$ & $3.58\times 10^{-4}$ & 0.9990 \\
$0.1/2^3$ & $0.1/2^4$ & $1.79\times 10^{-4}$ & 0.9995\\
$0.1/2^4$ & $0.1/2^5$ & $8.95\times 10^{-5}$ & 0.9998 \\
$0.1/2^5$ & $0.1/2^6$ & $4.47\times 10^{-5}$ & 0.9999 \\
\bottomrule
\end{tabular}
\label{tab:CRofRLM-Q-BDF1_CH}
\end{subtable}

\vspace{1em}
\begin{minipage}{0.48\textwidth}
\begin{subtable}{\textwidth}
\centering
\subcaption{RLM-Q-CN scheme}
\begin{tabular}{c c r c}
\toprule
Coarse $\Delta t$ & Fine $\Delta t$ & $L^2$ error & Order \\
\midrule
0.1 & $0.1/2$ & $9.70\times 10^{-5}$ & -- \\
$0.1/2$ & $0.1/2^2$ & $2.52\times 10^{-5}$ & 1.95 \\
$0.1/2^2$ & $0.1/2^3$ & $6.39\times 10^{-6}$ & 1.98 \\
$0.1/2^3$ & $0.1/2^4$ & $1.60\times 10^{-6}$ & 2.00 \\
$0.1/2^4$ & $0.1/2^5$ & $3.98\times 10^{-7}$ & 2.01 \\
$0.1/2^5$ & $0.1/2^6$ & $9.80\times 10^{-8}$ & 2.02 \\
\bottomrule
\end{tabular}
\label{tab:CRofRLM-Q-CN_CH}
\end{subtable}
\end{minipage}
\begin{minipage}{0.48\textwidth}
\begin{subtable}{\textwidth}
\centering
\subcaption{RLM-PC-CN scheme}
\begin{tabular}{c c r c}
\toprule
Coarse $\Delta t$ & Fine $\Delta t$ & $L^2$ error & Order \\
\midrule
0.1 & $0.1/2$ & $5.95\times 10^{-5}$ & -- \\
$0.1/2$ & $0.1/2^2$ & $1.56\times 10^{-5}$ & 1.93 \\
$0.1/2^2$ & $0.1/2^3$ & $3.99\times 10^{-6}$ & 1.97 \\
$0.1/2^3$ & $0.1/2^4$ & $1.01\times 10^{-6}$ & 1.98 \\
$0.1/2^4$ & $0.1/2^5$ & $2.53\times 10^{-7}$ & 1.99 \\
$0.1/2^5$ & $0.1/2^6$ & $6.35\times 10^{-8}$ & 2.00 \\
\bottomrule
\end{tabular}
\label{tab:CRofRLM-PC-CN_CH}
\end{subtable}
\end{minipage}
\label{tab:CRofRLM-CH}
\end{table}

Next, we present dynamic simulations for the Cahn--Hilliard equation. We set the computational domain as $\Omega=[0,1]^2$, the interface thickness $\varepsilon=10^{-2}$, the time step size $\Delta t=10^{-2}$, the mobility coefficient $M=10^{-6}$, and the spatial step size $h=1/128$. The initial condition is given by
\begin{equation}
\phi_0(x,y) = 1- \frac{1}{2}\Big[1-\tanh\left(\frac{0.2-r_1}{\delta_0}\right)\Big]\Big[1-\tanh\left(\frac{0.2-r_2}{\delta_0}\right)\Big]
\end{equation}
where $r_1=\sqrt{(x-0.3)^2+(y-0.5)^2}$, $r_2=\sqrt{(x-0.7)^2+(y-0.5)^2}$, and $\delta_0=0.01$.

Using the same initial and boundary conditions, time step, and spatial step sizes, we observe no discernible difference in the dynamics of the phase variable among the SAV-CN, RLM-Q-CN, and RLM-PC-CN schemes (see Figure~\ref{fig:PFCH}).

\begin{figure}[H]
\centering
\renewcommand{\arraystretch}{0.9}
\setlength{\tabcolsep}{2pt}
\begin{tabular}{c c c c c}
\toprule
{{\footnotesize Method}} & $T=500$ & $T=5000$ & $T=25000$ & $T=50000$ \\
\midrule
{\footnotesize SAV} &
\snapshot{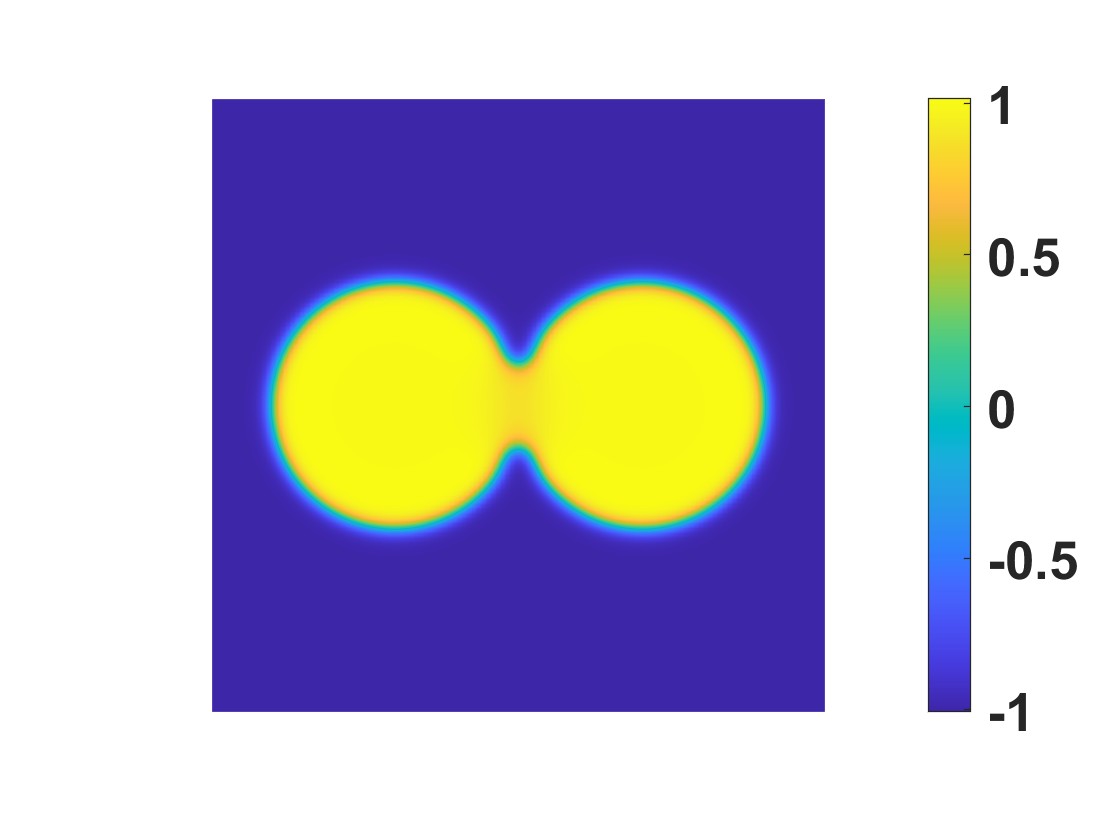} &
\snapshot{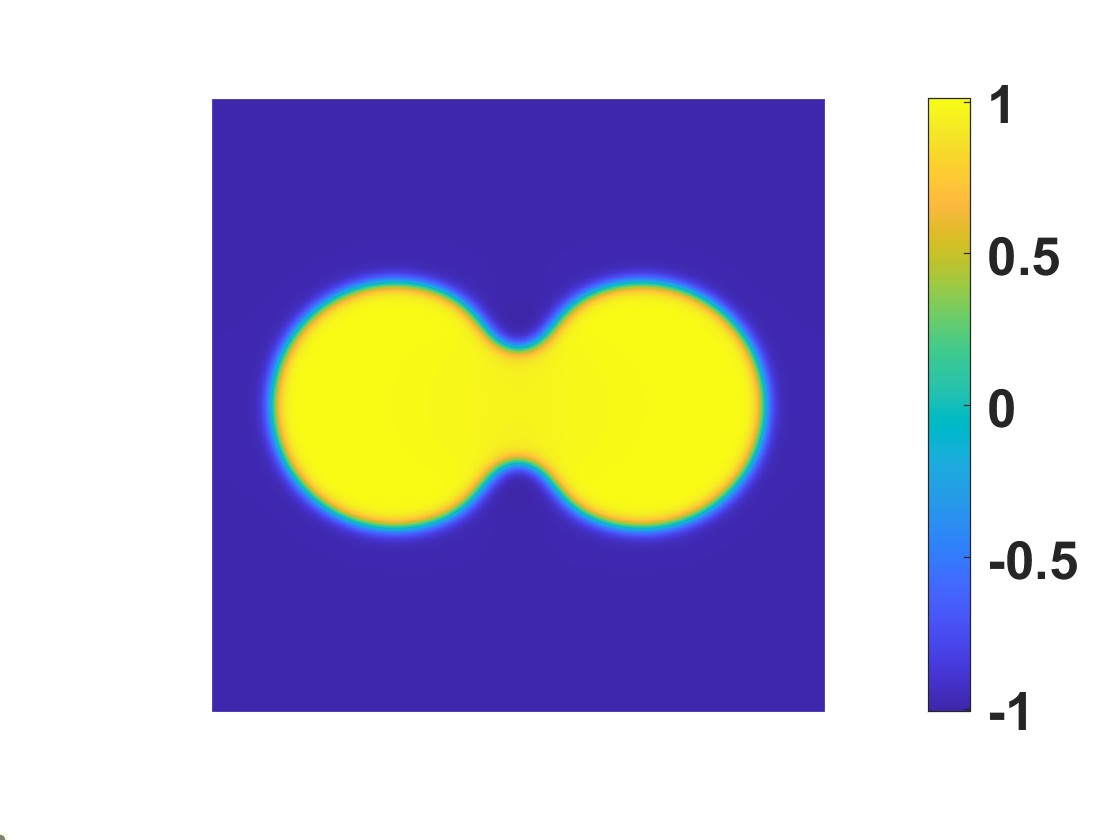} &
\snapshot{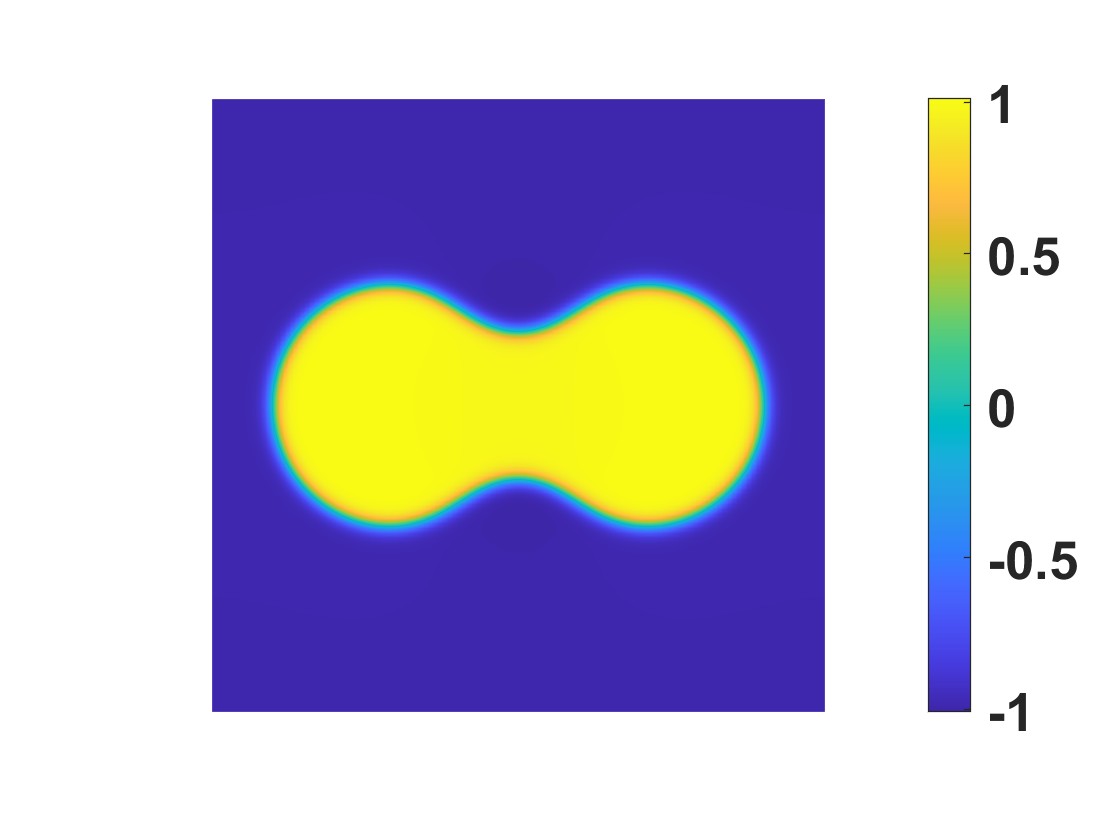} &
\snapshot{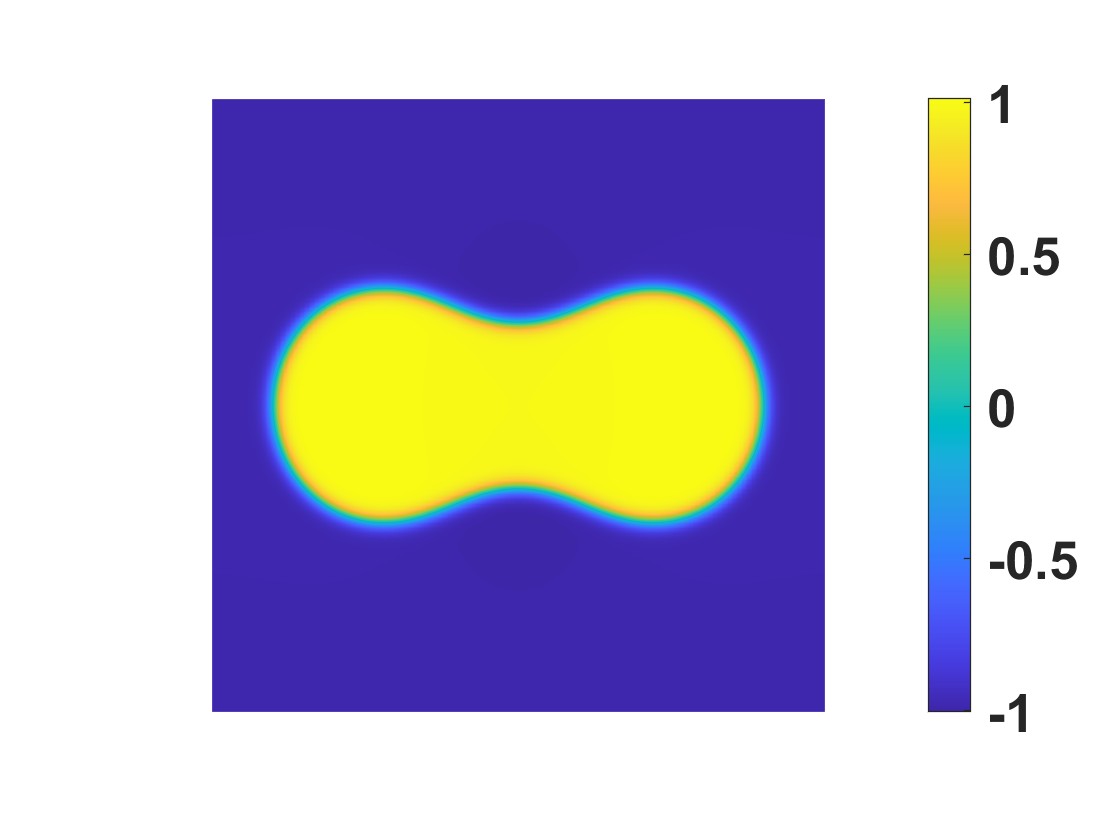} \\
\midrule
{\footnotesize RLM-Q} &
\snapshot{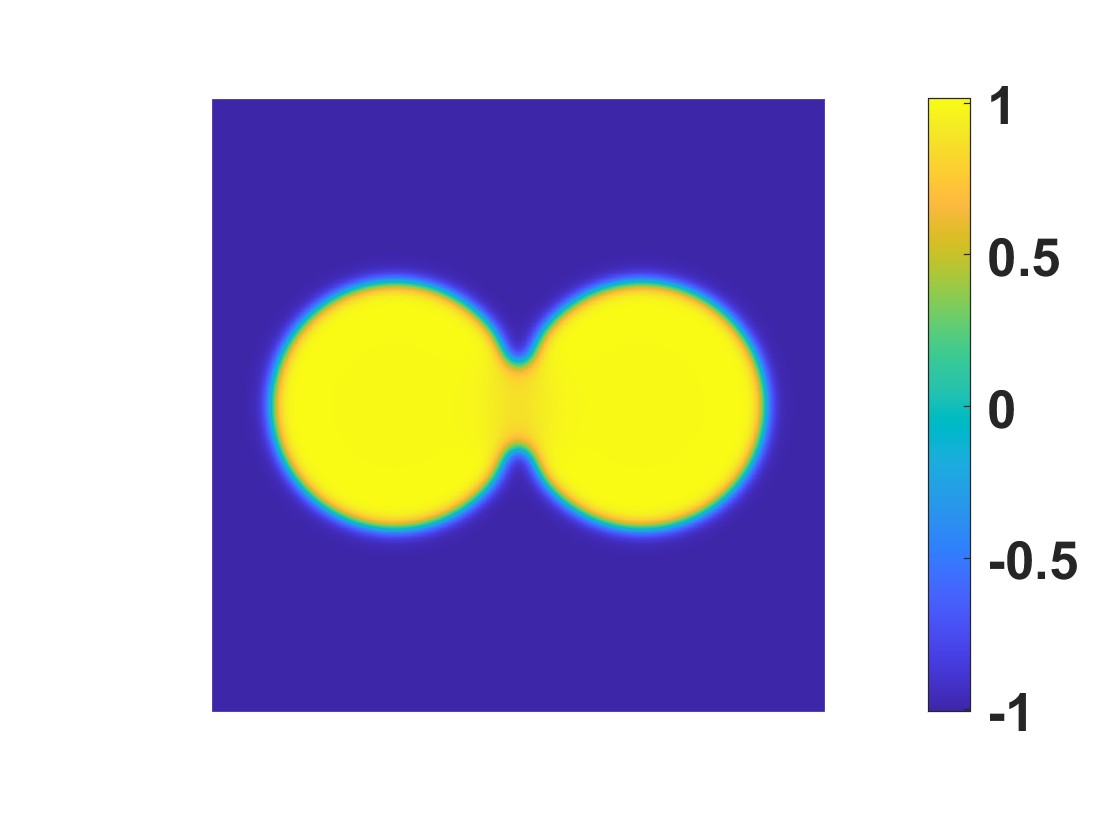} &
\snapshot{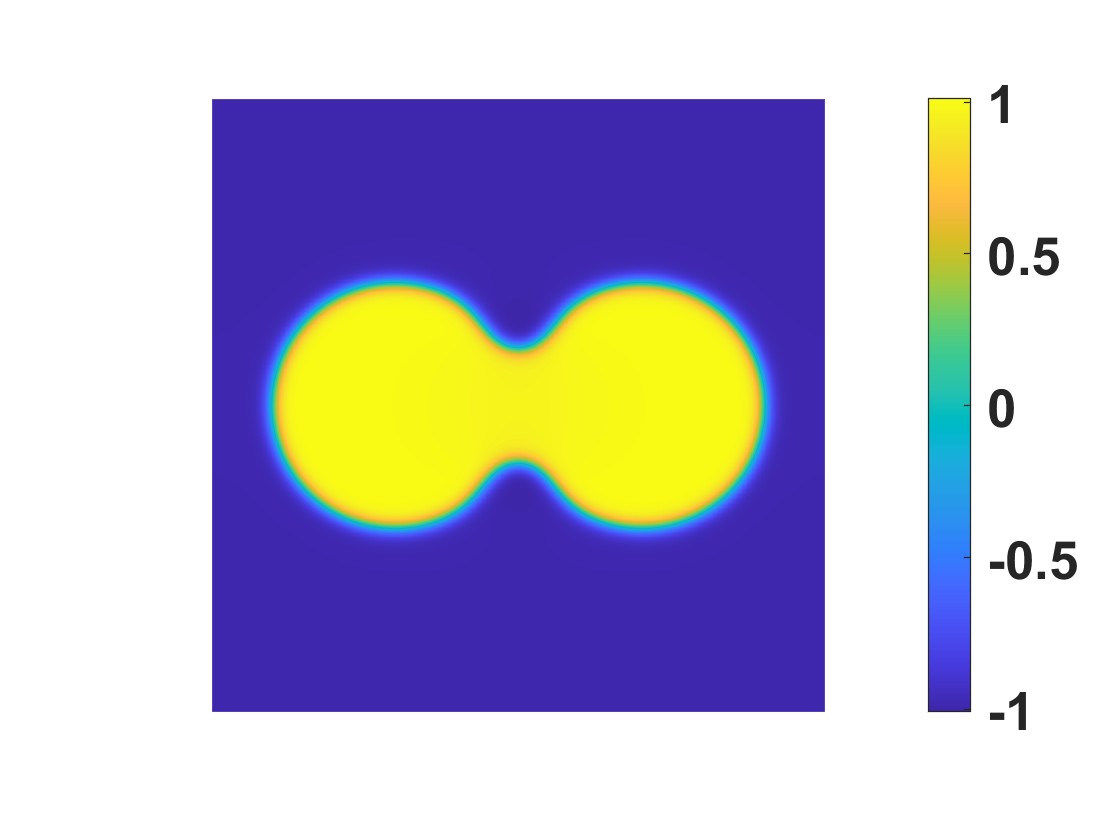} &
\snapshot{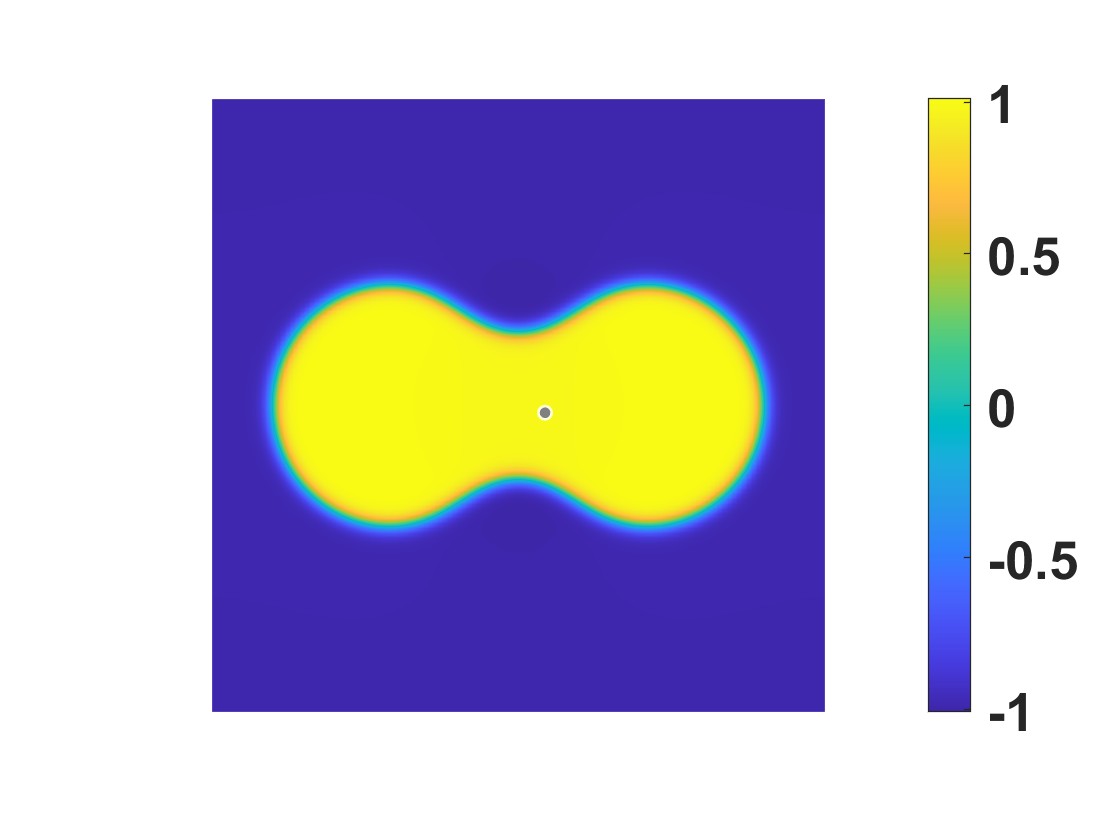} &
\snapshot{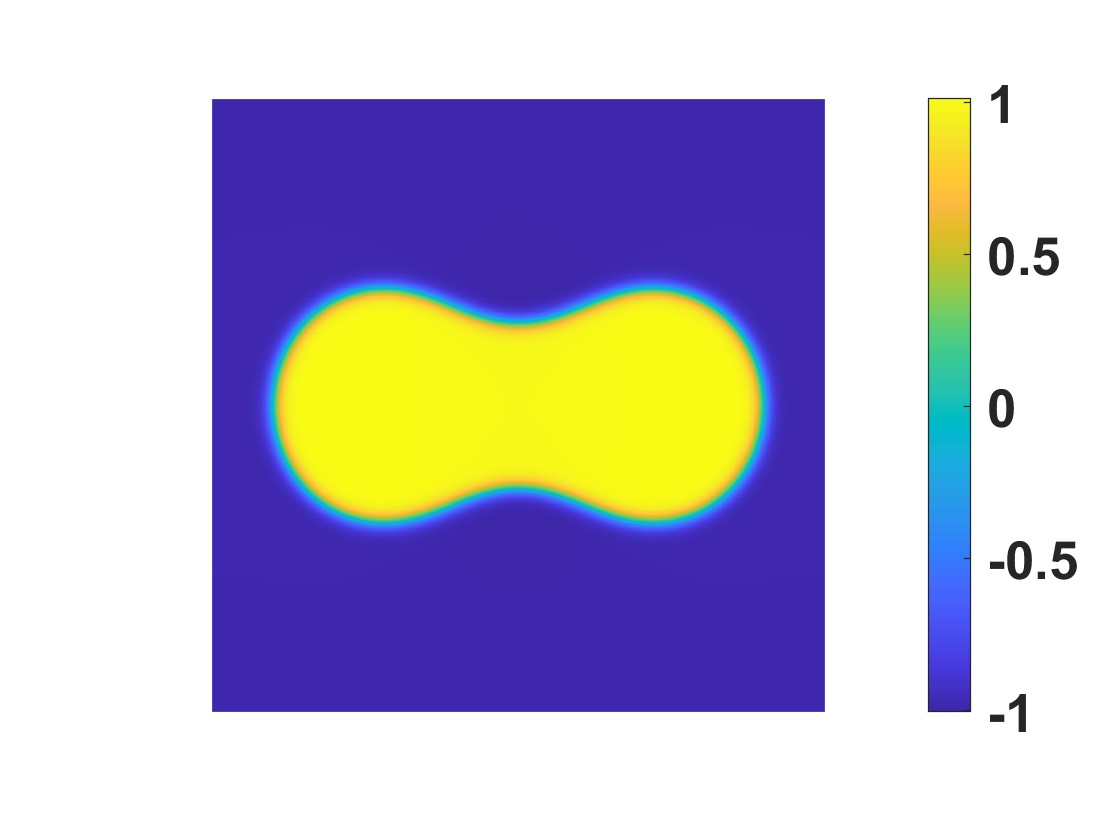} \\
\midrule
{\footnotesize RLM-PC} &
\snapshot{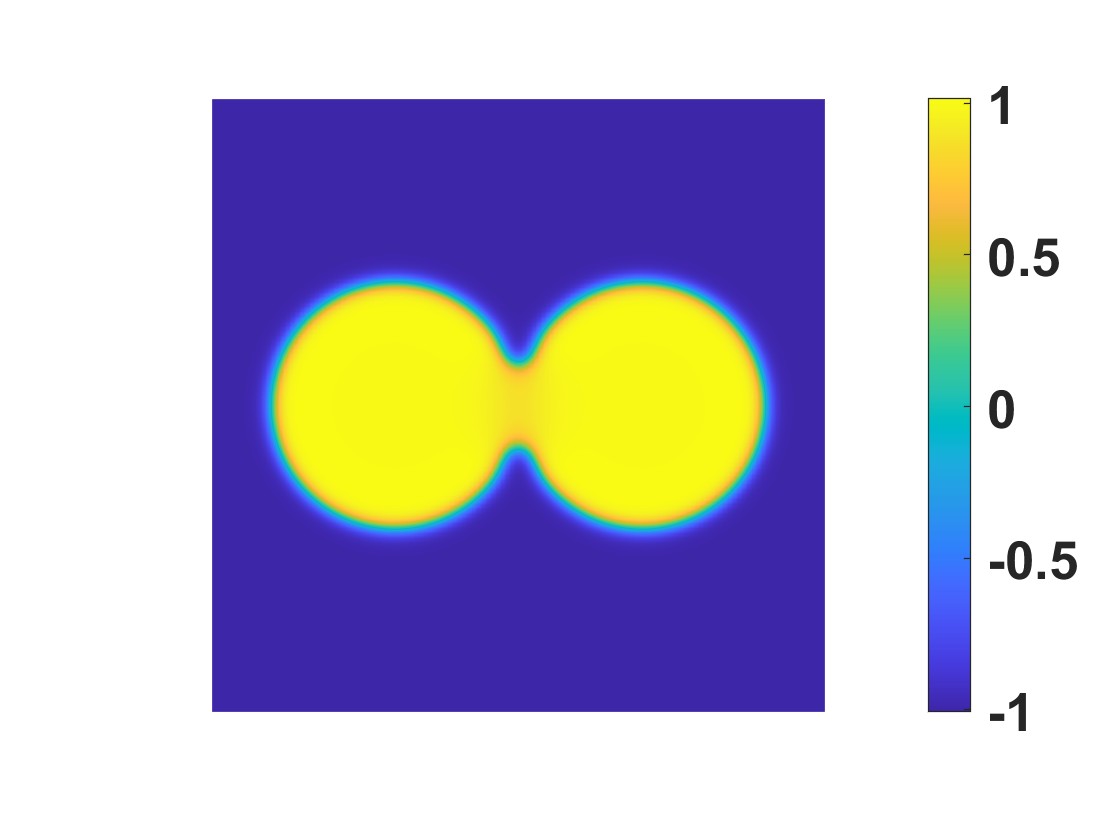} &
\snapshot{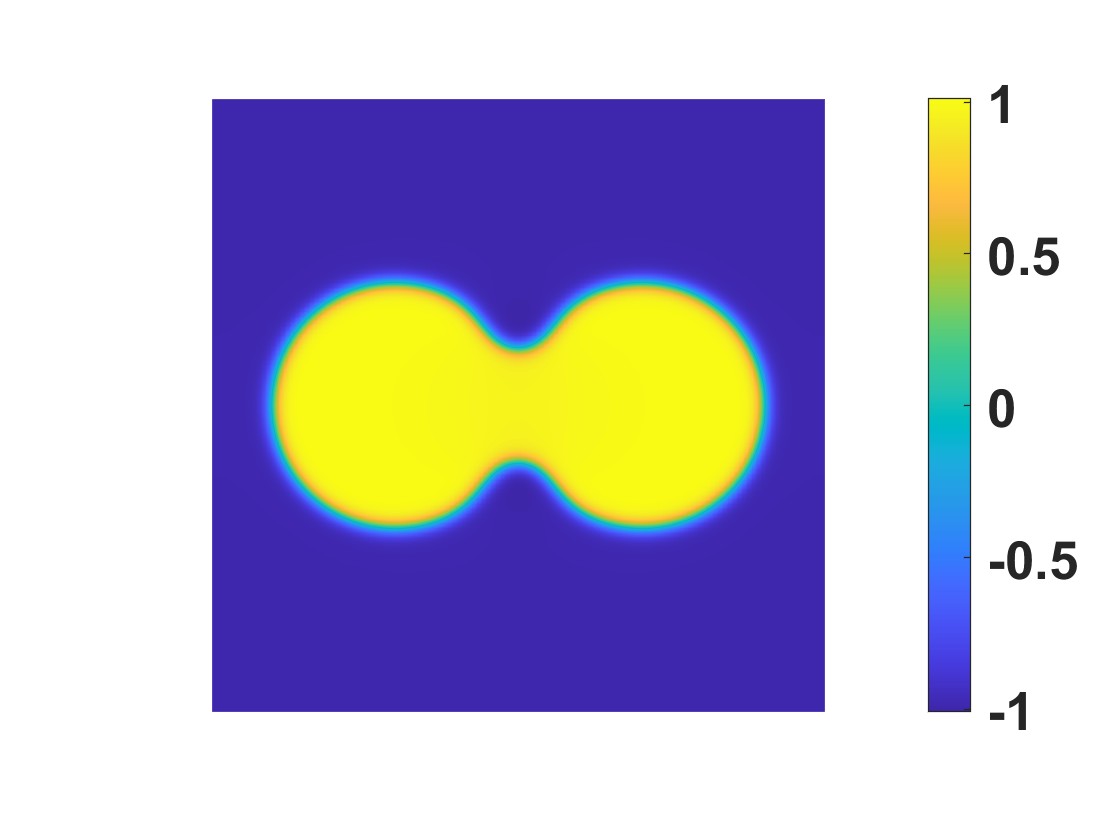} &
\snapshot{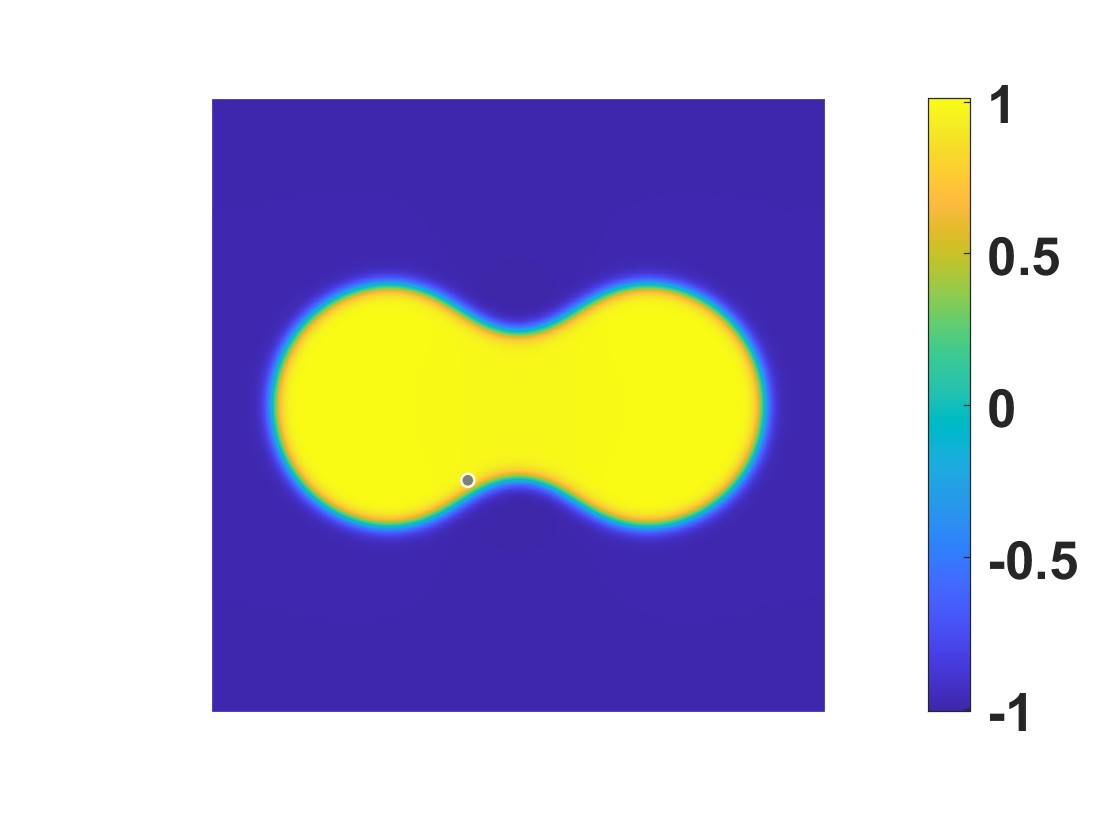} &
\snapshot{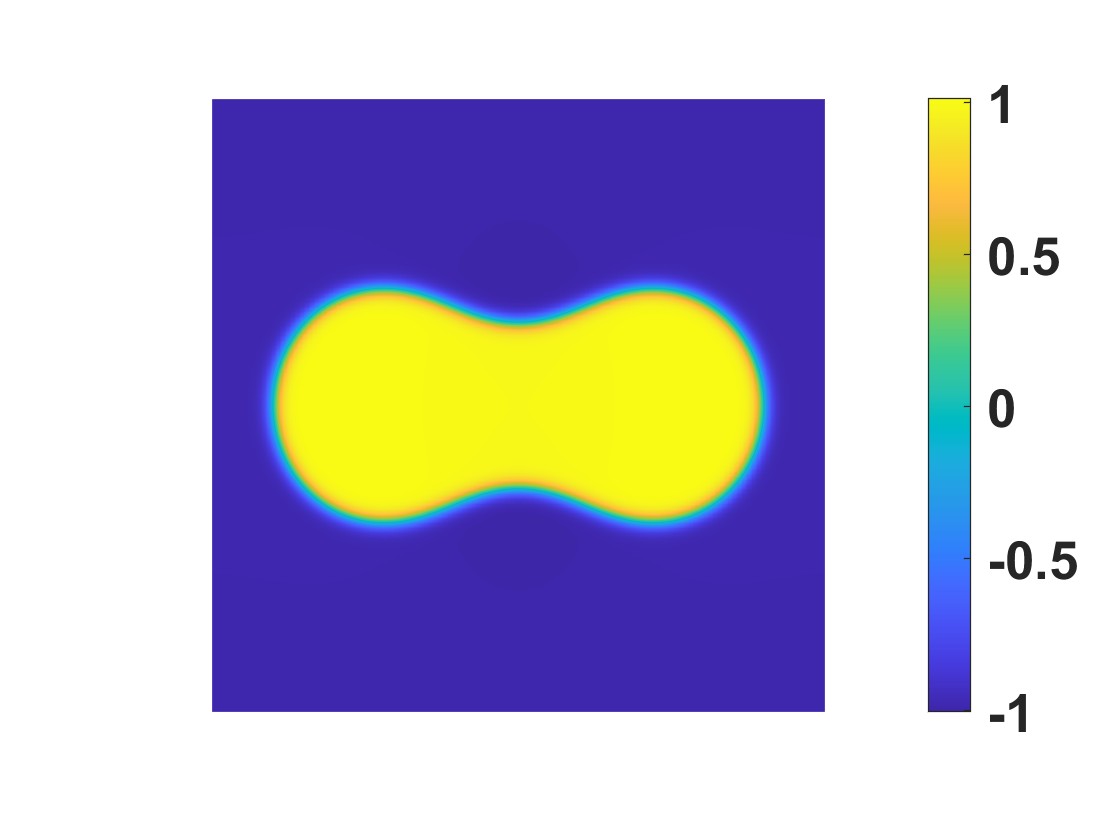} \\
\bottomrule
\end{tabular}
\caption{Snapshots of the phase variable $\phi$ for the Cahn--Hilliard equation at $T=500, 5000, 25000, 50000$. Each row corresponds to a different scheme: SAV-CN (top), RLM-Q-CN with $\alpha=10^{-2}$ (middle), and RLM-PC-CN with $\alpha=10^{-2}$ (bottom). Each column shows results at the same time point. }
\label{fig:PFCH}
\end{figure}

To further analyze the behavior of the schemes for the Cahn--Hilliard equation, we focus on the evolution of energy and related quantities. The results are summarized in Figure~\ref{fig:CH_energy}. In Figure~\ref{fig:CH_energy}(\subref{subfig:CH_energy_a}), we observe that the relaxed original energies $E_{\text{RLM}}$ of RLM-Q-CN and RLM-PC-CN with $\alpha=10^{-2}$ are identical to that of SAV-CN. We compare the $q$ values between the two RLM schemes in Figure~\ref{fig:CH_energy}(\subref{subfig:CH_energy_b}). When $\alpha=10^{-2}$, $q$ is very close to 1, while decreasing $\alpha$ causes $q$ to oscillate. In Figure~\ref{fig:CH_energy}(\subref{subfig:CH_energy_c}) and (\subref{subfig:CH_energy_d}), we compare two classes of energy differences. The first checks the effects of $\alpha$ on the accuracy of $\tilde{E}$, and the second checks the effects on the accuracy of the original energy $E$. When $\alpha=10^{-2}$, the performance of RLM-PC-CN is much better than that of RLM-Q-CN in (\subref{subfig:CH_energy_c}) and (\subref{subfig:CH_energy_d}). 

\begin{figure}[H]
\centering
\subfig{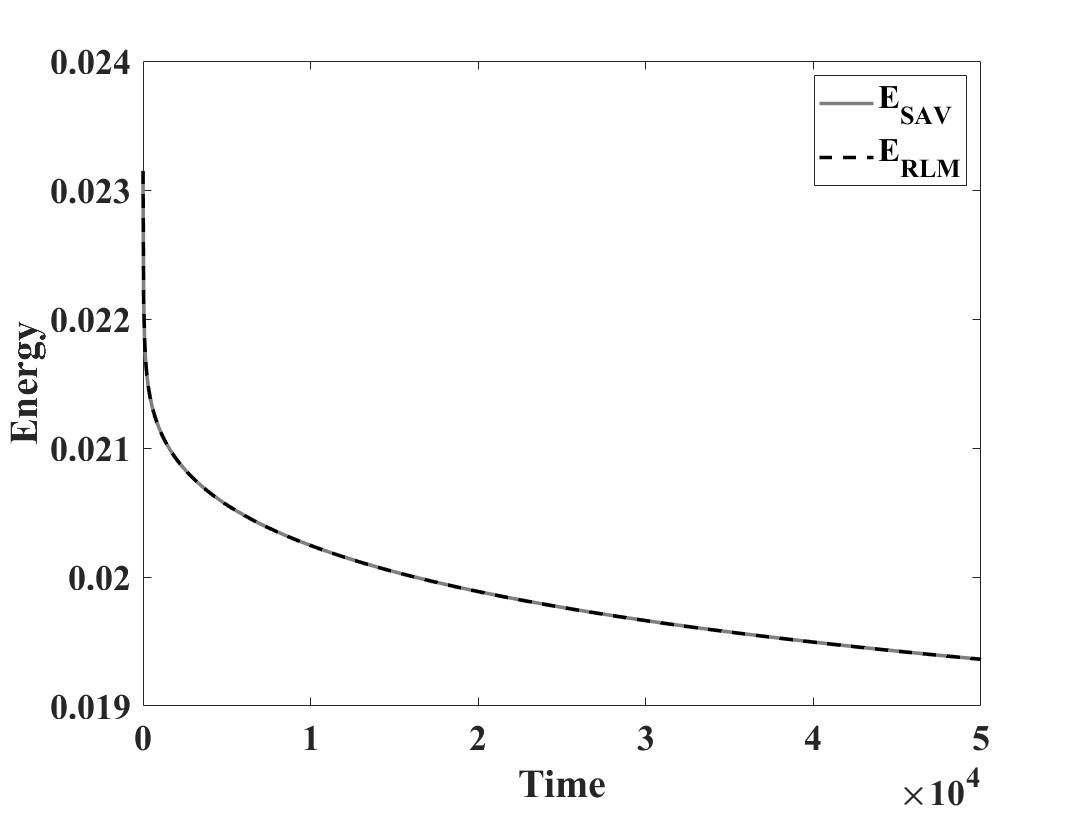}{Energy}{subfig:CH_energy_a}
\subfig{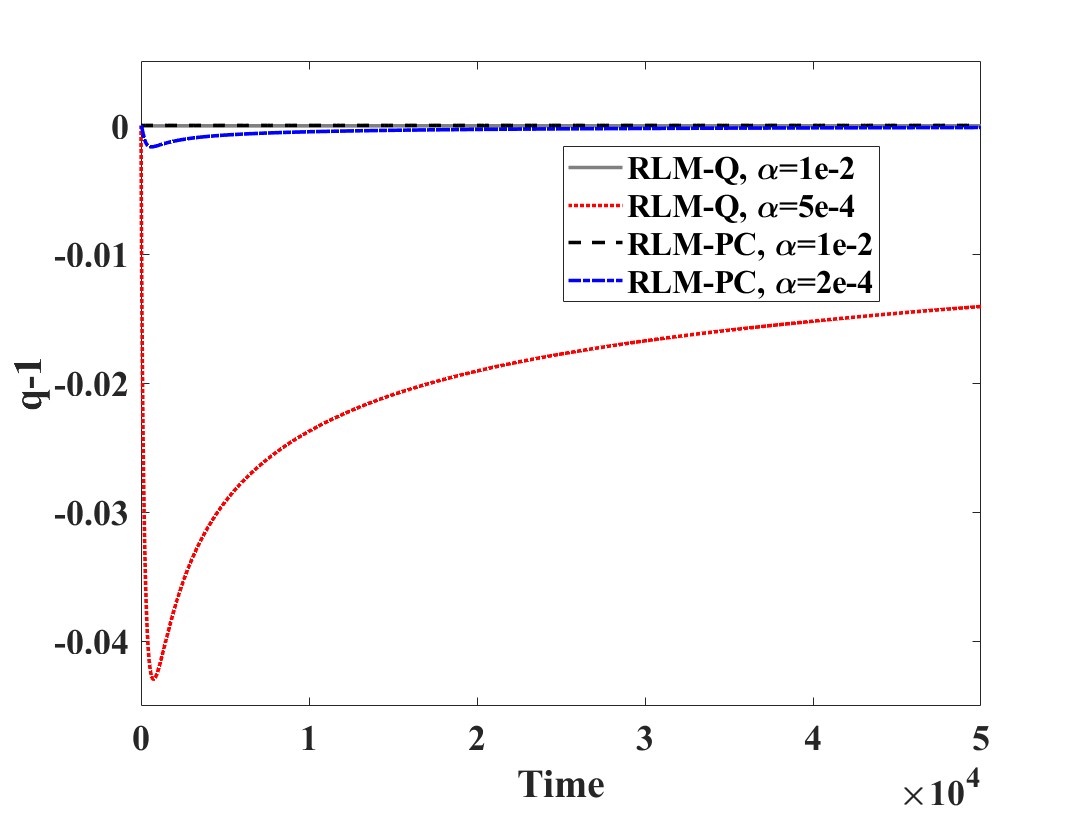}{Scaling factor $q$}{subfig:CH_energy_b}

\subfig{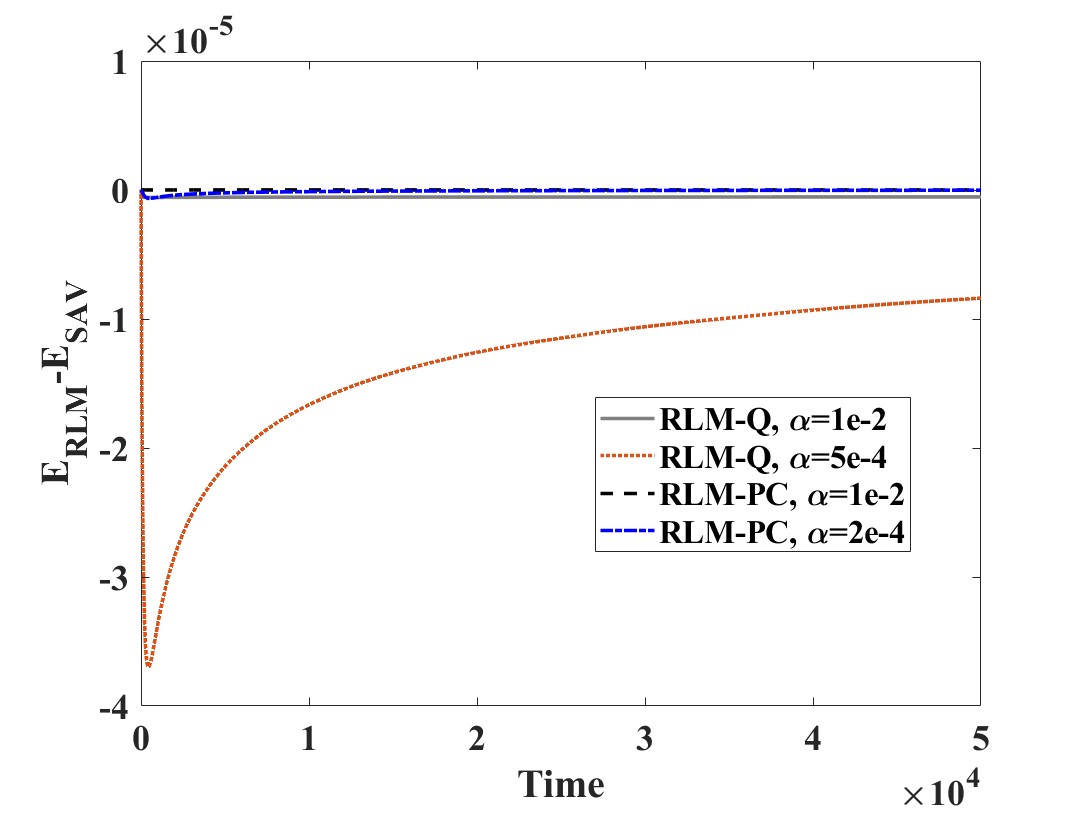}{First class of energy differences}{subfig:CH_energy_c}
\subfig{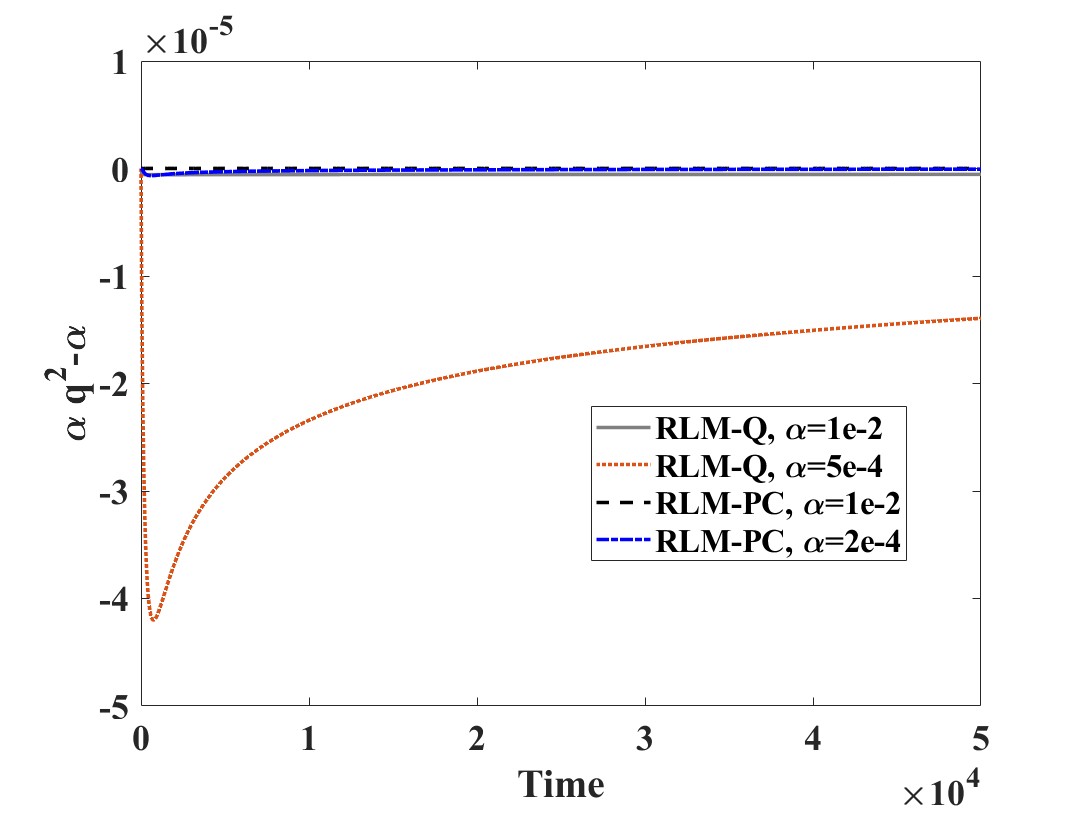}{Second class of energy differences}{subfig:CH_energy_d}
\caption{Time evolution of the energy, scaling factor $q$, and energy differences for the Cahn--Hilliard equation. (\subref{subfig:CH_energy_a})~Energies of SAV-CN, RLM-Q-CN ($\alpha=10^{-2}$, $5\times10^{-4}$), and RLM-PC-CN ($\alpha=10^{-2}$, $2\times10^{-4}$). (\subref{subfig:CH_energy_b})~Scaling factor $q$. (\subref{subfig:CH_energy_c})~First class of energy differences $\tilde{E}-E_{\text{SAV}}$. (\subref{subfig:CH_energy_d})~Second class of energy differences $\alpha (q^2-1)$. All simulations used $\Delta t=1\times 10^{-2}$.}
\label{fig:CH_energy}
\end{figure}

Next, we examine the accuracy of RLM-Q-CN and RLM-PC-CN with sufficiently large $\alpha$ values, as shown in Figure~\ref{fig:CH_energy_RLMs}(\subref{subfig:CH_RLMs_a}) and (\subref{subfig:CH_RLMs_b}), with the final time $T_{\mathrm{final}}=5\times 10^4$. For $\alpha= 10^{-2}$ and $\alpha=10^3$, the energy dissipation rates are close. This occurs because when $\alpha=10^{-2}$ or $\alpha=10^3$, $q$ is very close to $1$ in Figure~\ref{fig:CH_energy_RLMs}(\subref{subfig:CH_RLMs_c}) and (\subref{subfig:CH_RLMs_d}), thus the relaxation term $\alpha (q^2-1)$ remains approximately zero, thereby preserving the energy dissipation rate of the original energy. With the same $\alpha$, $q$ of RLM-PC is much closer to $1$ in the two cases.
These results match the Allen--Cahn case. For both the Allen--Cahn and Cahn--Hilliard equations, RLM-PC keeps $q$ closer to 1 than RLM-Q at the same $\alpha$. We also find that a larger $\alpha$ introduces a smaller error in $q$.
\begin{figure}[H]
\centering
\subfig{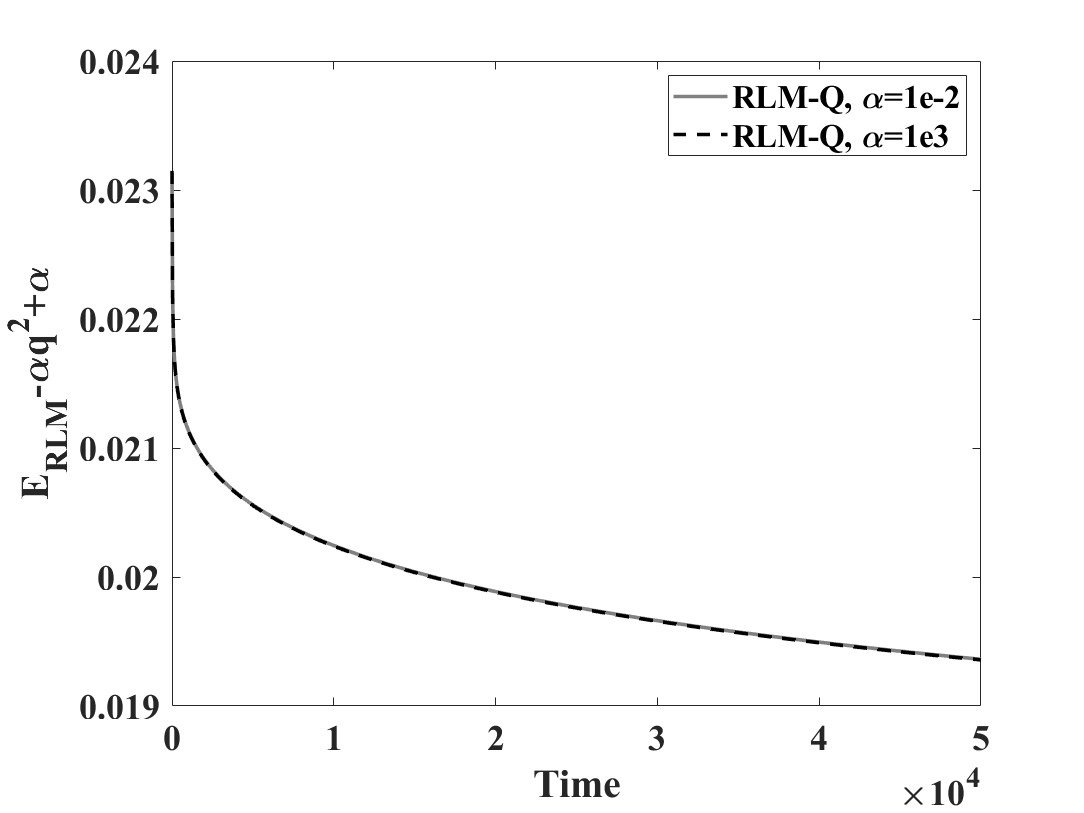}{RLM-Q}{subfig:CH_RLMs_a}
\subfig{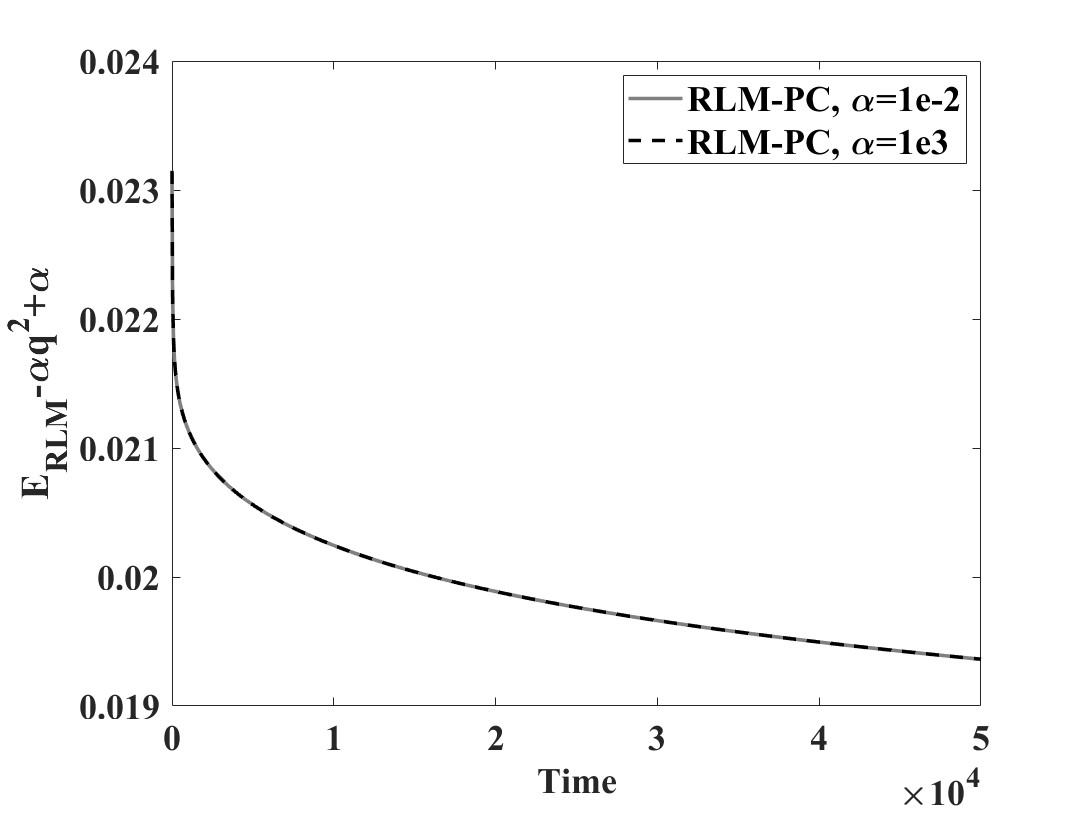}{RLM-PC}{subfig:CH_RLMs_b}

\subfig{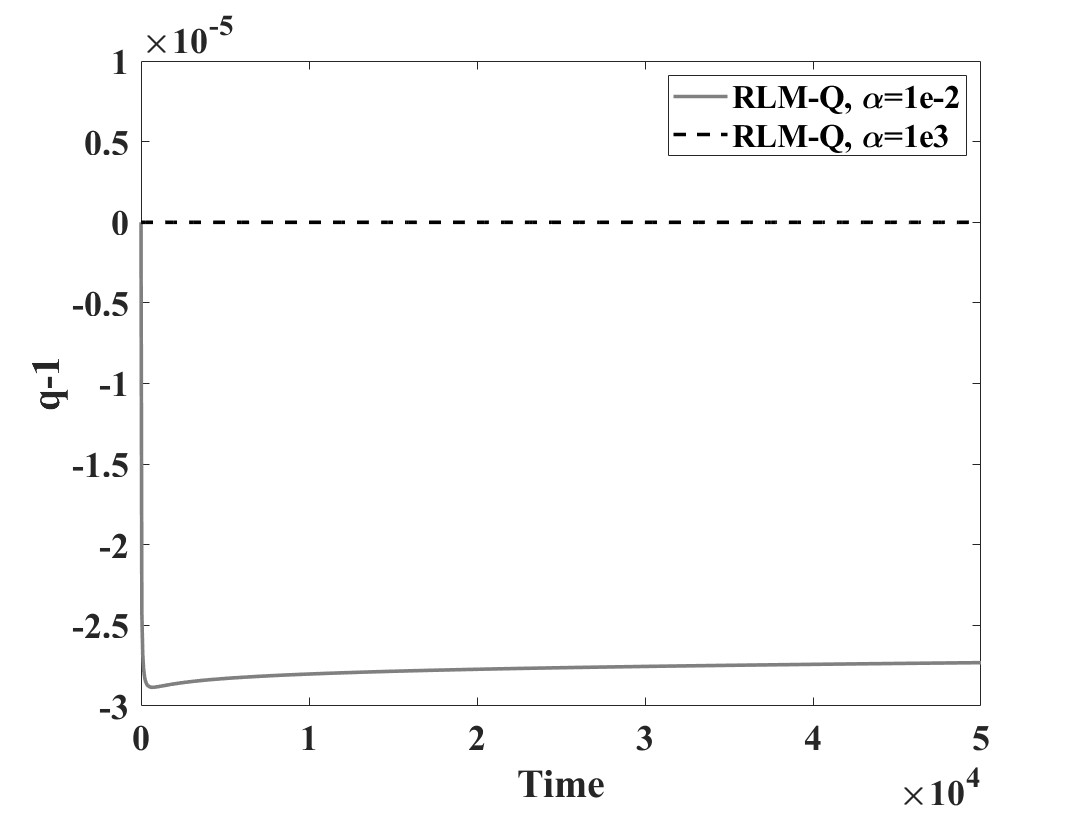}{Scaling factor $q$ ($\alpha=10^{-2}$)}{subfig:CH_RLMs_c}
\subfig{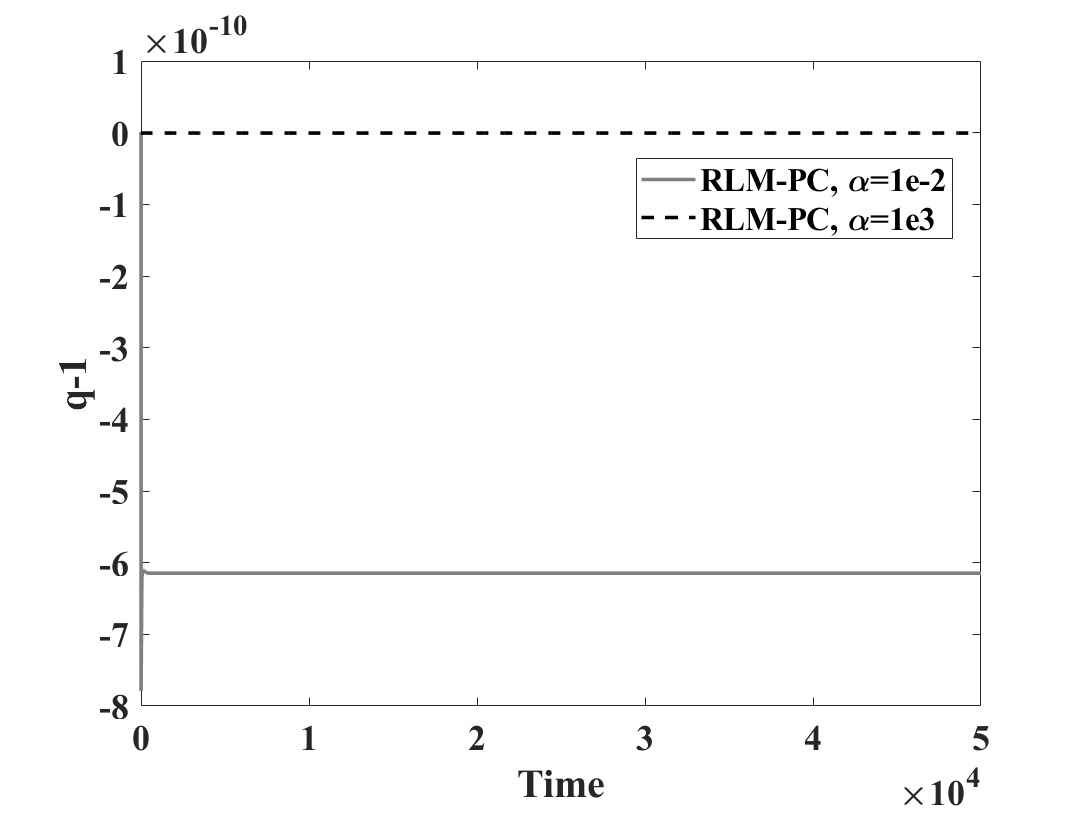}{Scaling factor $q$ ($\alpha=10^{3}$)}{subfig:CH_RLMs_d}

\caption{Time evolution of the original energy $\tilde{E}-\alpha (q^2-1)$ and scaling factor $q$ for the Cahn--Hilliard equation with $\alpha=10^{-2}$ and $10^{3}$. (\subref{subfig:CH_RLMs_a}) RLM-Q: the energy curves for $\alpha=10^{-2}$ coincide with those of  $\alpha=10^3$. (\subref{subfig:CH_RLMs_b}) RLM-PC: the energy curves for $\alpha=10^{-2}$ are also the same as those of  $\alpha=10^3$. (\subref{subfig:CH_RLMs_c}) and (\subref{subfig:CH_RLMs_d}) show the time evolutions of $q$ for the two RLM methods with $\alpha=10^{-2}$ and $\alpha=10^{3}$.}
\label{fig:CH_energy_RLMs}
\end{figure}

Given the high performance of the RLM schemes at large $\alpha$, we next use both RLM schemes with $\alpha = 1\times 10^3$ to simulate the long-time behavior of two merging droplets. The initial conditions and parameters are the same as above.
We adopt $\Delta t = 0.1$ to investigate the long-time behavior until the system reaches a steady state with $T_{final} = 3\times 10^6$. 
Figure~\ref{fig:Energy-long-CH}(\subref{subfig:Energy-CH-long}) shows that the energy curves of the RLM-Q and RLM-PC schemes coincide, indicating that both share the same accuracy.
We then define the total volume $V(t) = \int_\Omega \phi(\bx, t) \, d\bx$. 
In Figure~\ref{fig:Energy-long-CH}(\subref{subfig:q-long}) and (\subref{subfig:volume-long}), we evaluate the performance of the two RLM schemes with respect to the scaling factor $q$ and the conservation of total volume. 
The results demonstrate that RLM-PC-CN outperforms RLM-Q-CN.  
\begin{figure}[H]
\centering
\begin{subfigure}[b]{0.32\textwidth}
\centering
\includegraphics[width=\textwidth]{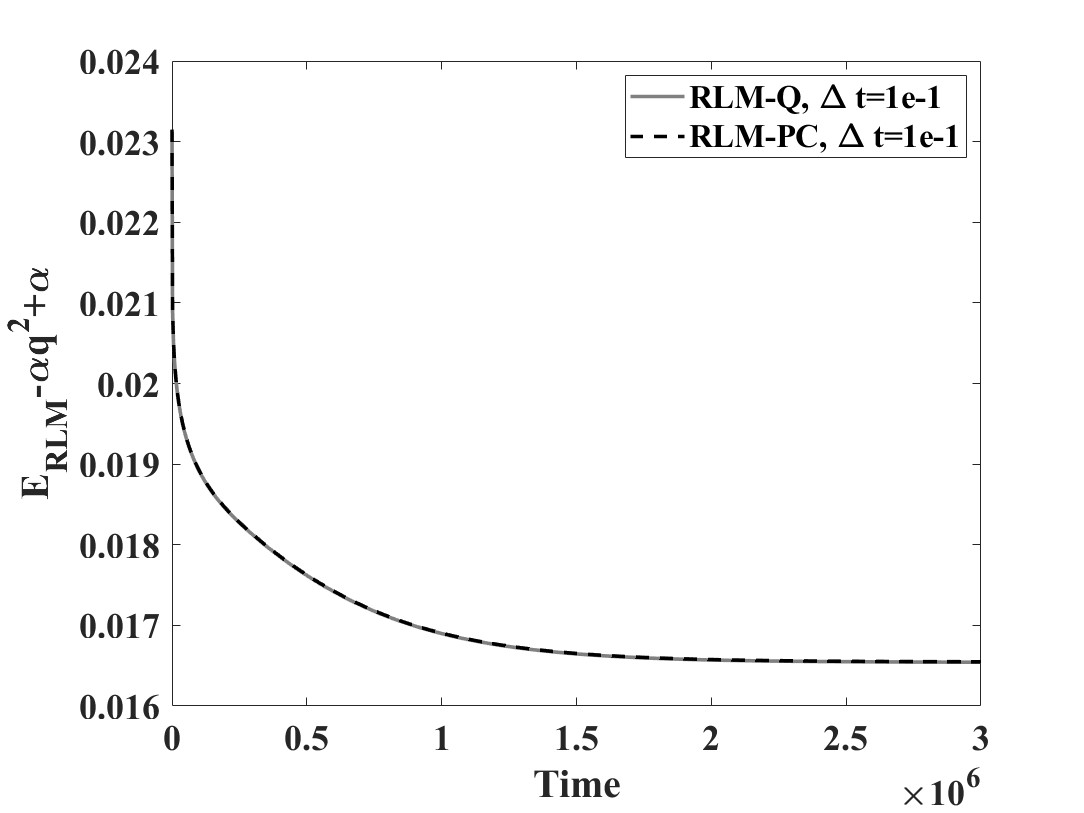}
\caption{Original energy} 
\label{subfig:Energy-CH-long}
\end{subfigure}
\begin{subfigure}[b]{0.32\textwidth}
\centering
\includegraphics[width=\textwidth]{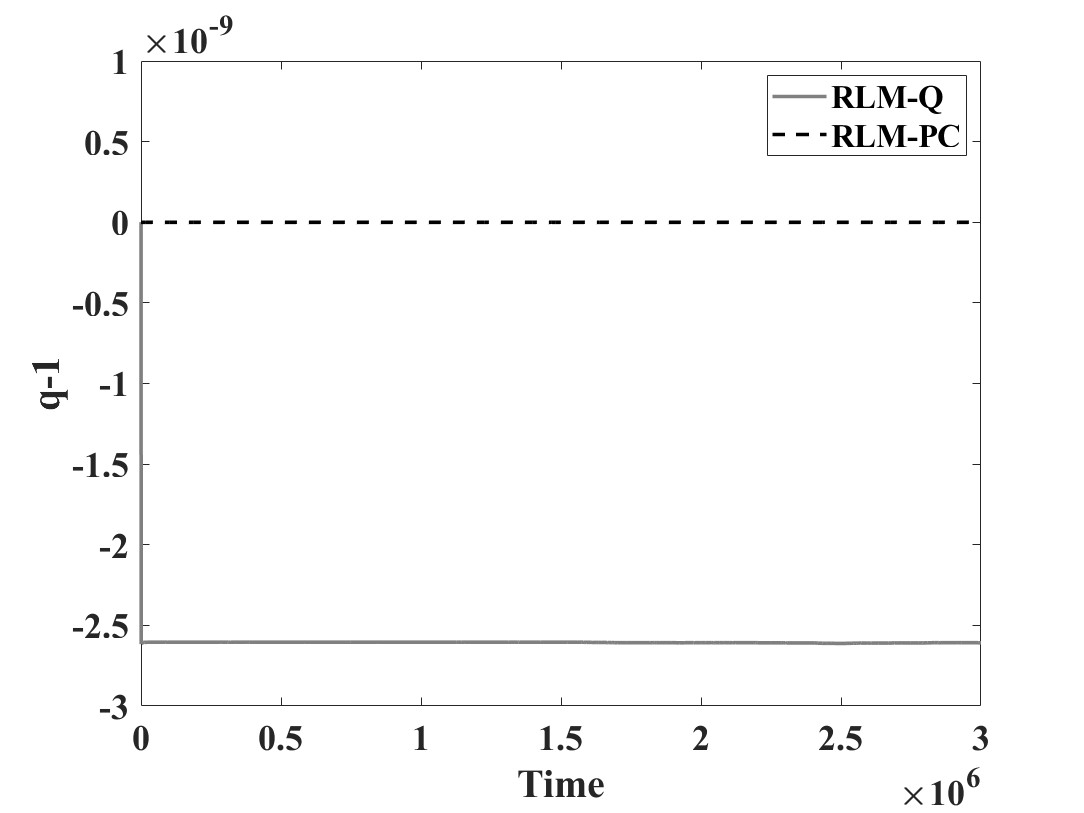}
\caption{Scaling factor $q$}
\label{subfig:q-long}
\end{subfigure}
\begin{subfigure}[b]{0.32\textwidth}
\centering
\includegraphics[width=\textwidth]{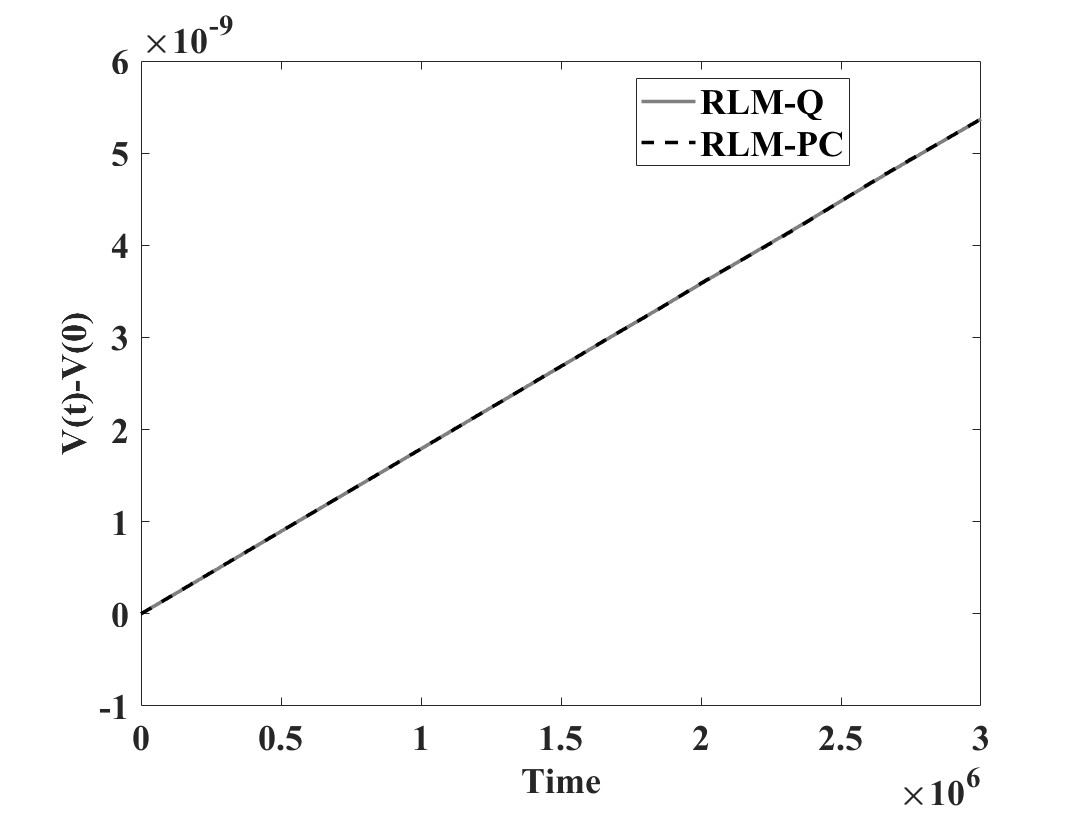}
\caption{Total volume}
\label{subfig:volume-long}
\end{subfigure}
\caption{Long-time simulations of the Cahn--Hilliard equation with the double-well potential using the RLM-Q-CN and RLM-PC-CN schemes ($\alpha=10^{3}$): time evolution of the original energy in (a), scaling factor $q$ in (b), and total volume $V(t)=\int_\Omega \phi(\bx,t)\,\diff\bx$ in (c).}
\label{fig:Energy-long-CH}
\end{figure}
The snapshots of the phase variable with $\Delta t=0.1$ at $ T= 1.5\times 10^5,1.5\times 10^6,3\times 10^6$ simulated by RLM-Q-CN and RLM-PC-CN are shown in Figure \ref{fig:long-CH}. The two schemes yield visually indistinguishable patterns. 
\begin{figure}[H]
\centering
\renewcommand{\arraystretch}{0.9}
\setlength{\tabcolsep}{2pt}
\begin{tabular}{c c c c c}
\toprule
{{\footnotesize Method}} & $T=1.5\times 10^5$ & $T=1.5\times 10^6$ & $T=3\times 10^6$  \\
\midrule
{\footnotesize RLM-Q} &
\snapshot{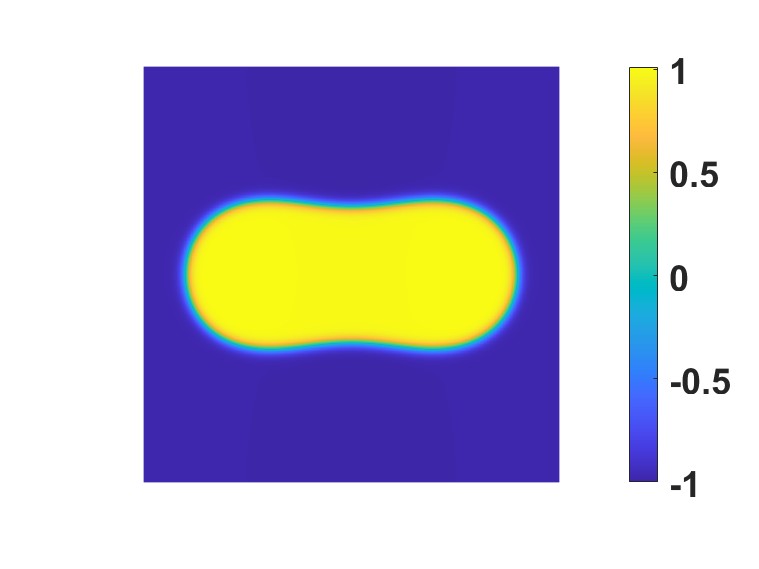} &
\snapshot{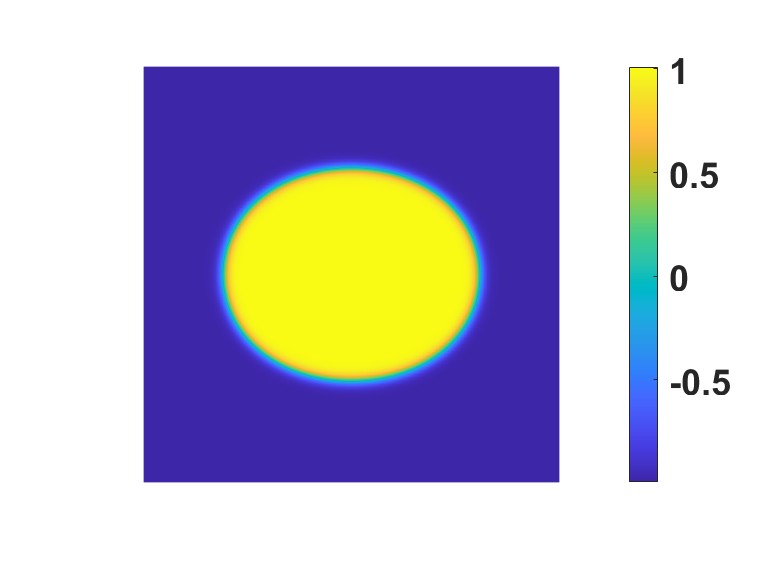} &
\snapshot{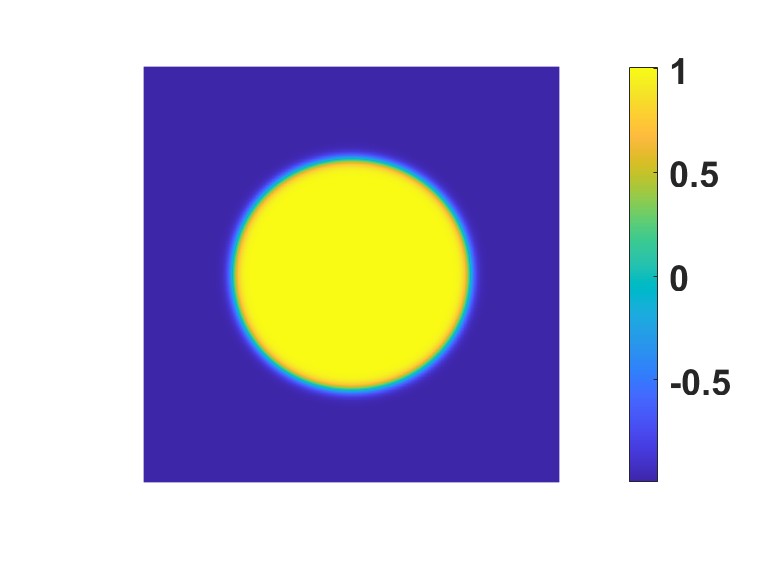} \\
\midrule
{\footnotesize RLM-PC} &
\snapshot{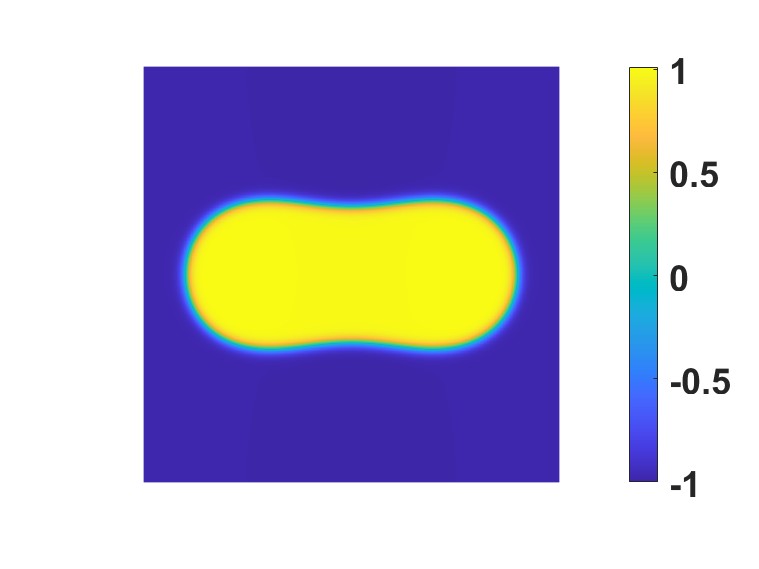} &
\snapshot{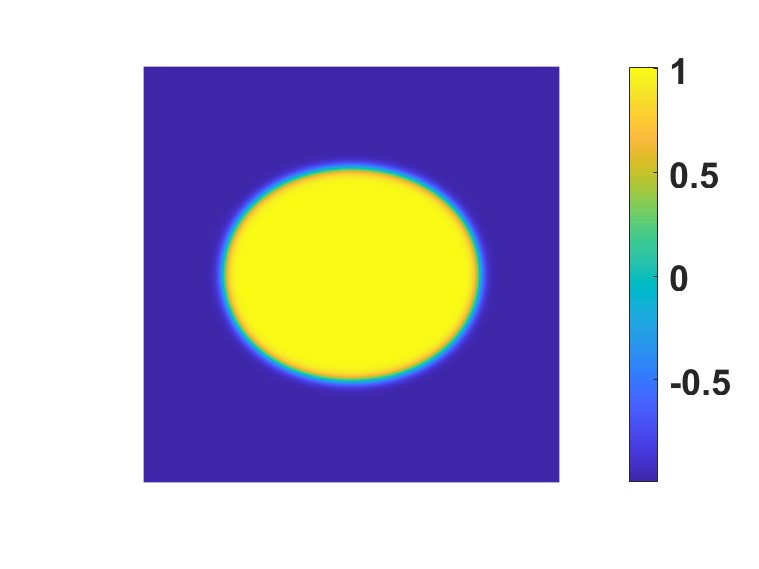} &
\snapshot{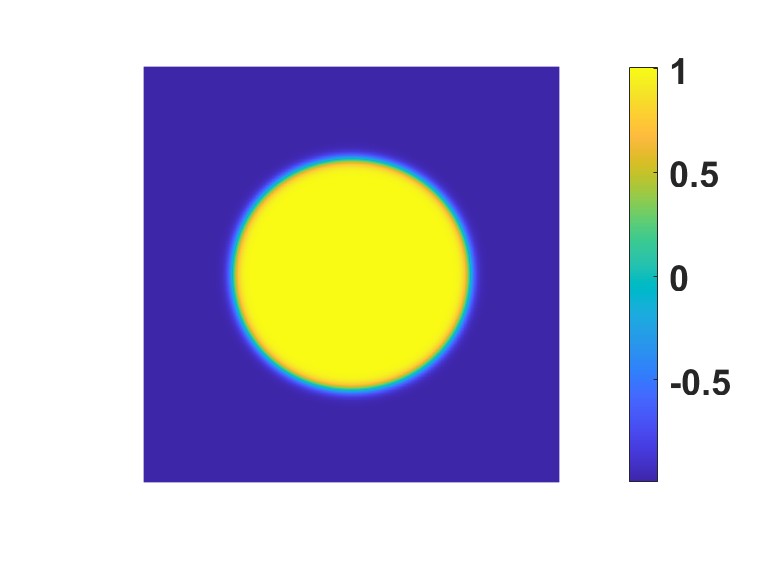} \\
\bottomrule
\end{tabular}
\caption{Snapshots of the phase variable $\phi$ for the Cahn--Hilliard equation at $T=1.5\times 10^5, 1.5\times 10^6, 3\times 10^6$. Each row corresponds to a different scheme: RLM-Q-CN with $\alpha=10^{3}, \Delta t=0.1$ (top), and RLM-PC-CN with $\alpha=10^{3}, \Delta t=0.1$ (bottom). Each column shows results at the same time point. }
\label{fig:long-CH}
\end{figure}

The preceding examples with the double-well potential serve as benchmark cases. We now demonstrate the applicability of RLM to more challenging potentials, starting with the Flory--Huggins potential that contains logarithmic terms.

\subsection{CH-type Equation with Flory--Huggins Potential}\label{sec:CH-flory-huggins}

In this example, we consider the polymer solution system with the bulk free energy given by a Flory--Huggins potential
\begin{equation} \label{eq:free-energy-polymer}
E(\phi) = \int_\Omega \left( \frac{\varepsilon^2}{2} |\nabla \phi|^2 + F(\phi) \right) \,\diff\bx, \quad F(\phi) = \phi \ln \phi+(1-\phi) \ln(1-\phi)+\chi_c \phi(1-\phi),
\end{equation}
where $\phi$ is the volume fraction of the polymer segments, $\varepsilon$ is the interfacial thickness parameter, and $\chi_c$ is a parameter describing the affinity between the polymer and the solvent. In this case, 
\[
f(\phi) = F'(\phi) = \ln \phi -\ln (1-\phi)+\chi_c (1-2\phi).
\]

Based on the RLM reformulation in Section~\ref{sec:rlm-methods}, the Cahn--Hilliard-type equation with the Flory--Huggins potential is given by
\begin{align}
& \partial_t \phi = M \Delta \mu, \label{eq:CH-flory-huggins-phi} \\
& \mu = -\varepsilon^2 \Delta \phi + q(t) \left[\ln \phi - \ln(1-\phi) + \chi_c(1-2\phi)\right], \label{eq:CH-flory-huggins-mu} \\
& \frac{d}{dt} \int_\Omega F(\phi) \,\diff\bx + \alpha \,\frac{d \big(q(t)\big)^2 }{dt}= \int_\Omega q(t) \left[\ln \phi - \ln(1-\phi) + \chi_c(1-2\phi)\right] \partial_t \phi \,\diff\bx, \label{eq:CH-flory-huggins-q}
\end{align}
where $M > 0$ is the mobility constant and $q(0)=1$. Homogeneous Neumann boundary conditions are assumed.

Although physically $\phi \in [0,1]$, the Cahn--Hilliard equation with constant mobility operator $M\Delta$ does not enforce such a bound. To overcome this shortcoming,  $F$ is modified as follows~\cite{copetti1992numerical}
\[
\tilde F(\phi)=\begin{cases} \phi \ln \phi +\frac{(1-\phi)^2}{2\sigma}+(1-\phi)\ln \sigma -\frac{\sigma}{2}+\chi_c \phi(1-\phi), & \text{if} \quad \phi \geq 1-\sigma,\\
\phi \ln \sigma +\frac{\phi^2}{2\sigma}+(1-\phi)\ln (1-\phi) -\frac{\sigma}{2}+\chi_c \phi(1-\phi),&  \text{if}\quad \phi \leq \sigma,\\
F,&  \text{others} .
\end{cases}
\]
When using the RLM-Q approach, we introduce the stabilization technique as \eqref{eq:F-quadratize-stabilization}, namely
\[
\tilde F^{n+1} =(\phi^{n+1})^2+\tilde F (\phibar^{n+1})-(\phibar^{n+1})^2,
\]
i.e., we take $S=1$ in \eqref{eq:F-quadratize-stabilization}.
When applying the SAV approach, we introduce the scalar auxiliary variable as $\sqrt{\int_\Omega \tilde F \,\diff\bx + \Csav}$, where $\Csav$ is a constant to ensure $\int_\Omega \tilde F \,\diff\bx + \Csav \geq 0$.

In this case, we set the computational domain as $\Omega=[0,1]^2$, the interface thickness $\varepsilon=10^{-2}$, the regularization parameter $\sigma=0.01$, $\chi_c=3$, the mobility coefficient $M=10^{-5}$, and the spatial step size $h=1/128$. The initial condition is set as
\[
\phi_0(x,y) = 0.3+0.01\zeta,
\]
where $\zeta$ is a random variable uniformly distributed on $[-1,1]$. Convergence tests are omitted for brevity, as the temporal accuracy is consistent with the results in Sections~\ref{sec:AC-double-well} and~\ref{sec:CH-double-well}.

Next, we examine the accuracy of RLM for free energy systems with logarithmic terms using $\Delta t=10^{-2}$ and final time $T_{final}=2\times 10^4$. Figure~\ref{fig:Energy-polymer}(\subref{subfig:Energy-polymer-a}) and (\subref{subfig:Energy-polymer-b}) show the time evolution of the original energy $E_{\text{RLM}}-\alpha (q^2-1)$ for various $\alpha$ using the RLM-Q-CN and RLM-PC-CN schemes. The energy curves for $\alpha=10^{-2}$ and $10^3$ coincide. Figure~\ref{fig:Energy-polymer}(\subref{subfig:Energy-polymer-c}) shows that $q$ approaches 1 with different $\alpha$, and RLM-PC-CN performs better than RLM-Q-CN in both $\alpha=10^{-2}$ and $\alpha=10^3$ cases. RLM-PC-CN is also computationally more efficient than RLM-Q-CN.

\begin{figure}[H]
\centering
\begin{subfigure}[b]{0.32\textwidth}
\centering
\includegraphics[width=\textwidth]{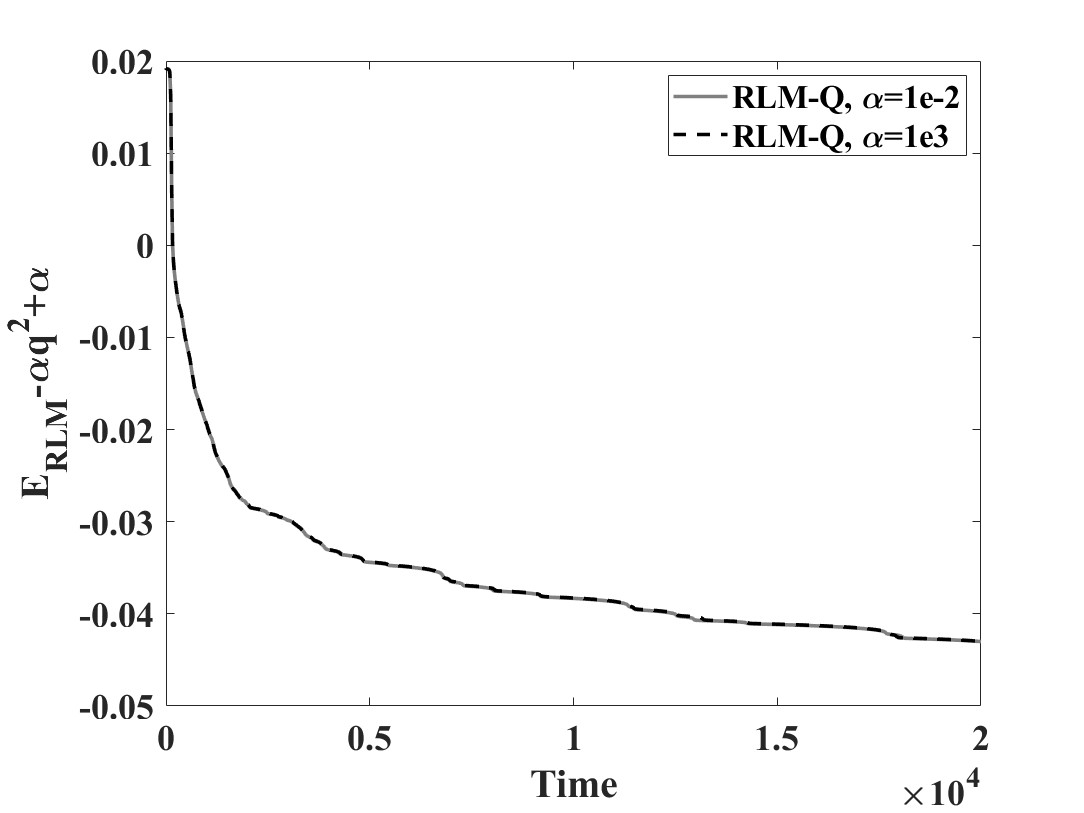}
\caption{RLM-Q}
\label{subfig:Energy-polymer-a}
\end{subfigure}
\hfill
\begin{subfigure}[b]{0.32\textwidth}
\centering
\includegraphics[width=\textwidth]{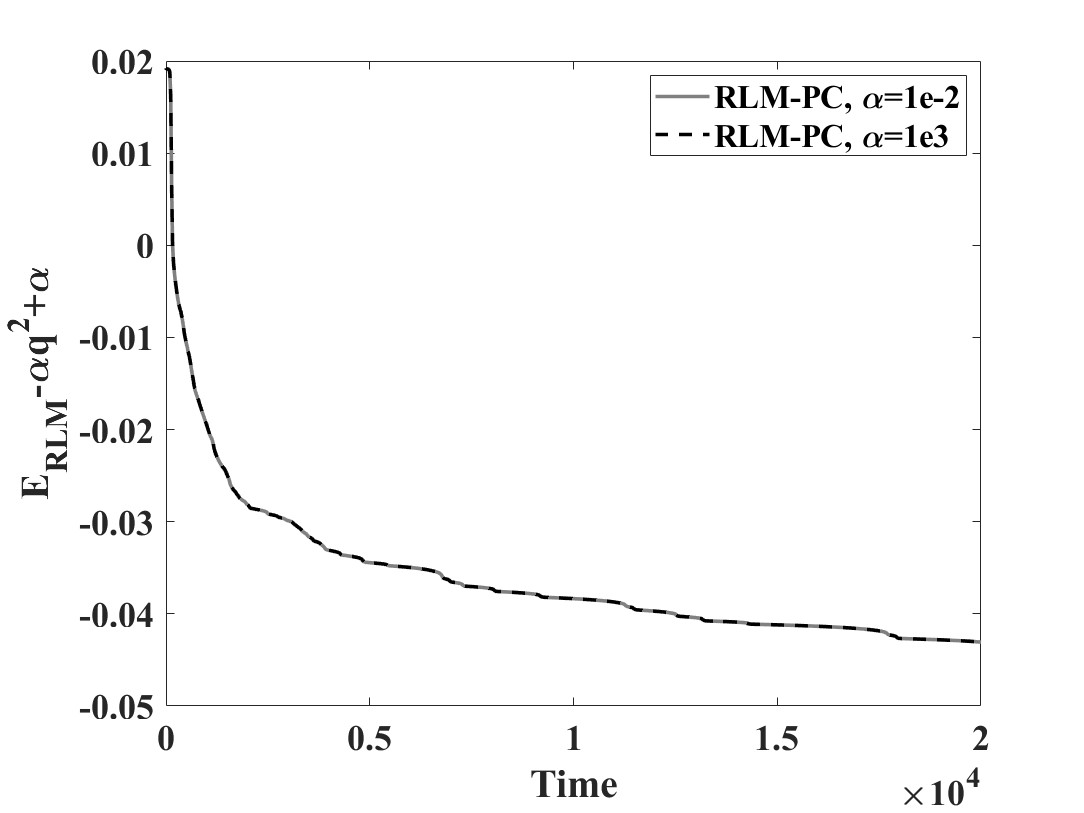}
\caption{RLM-PC}
\label{subfig:Energy-polymer-b}
\end{subfigure}
\hfill
\begin{subfigure}[b]{0.32\textwidth}
\centering
\includegraphics[width=\textwidth]{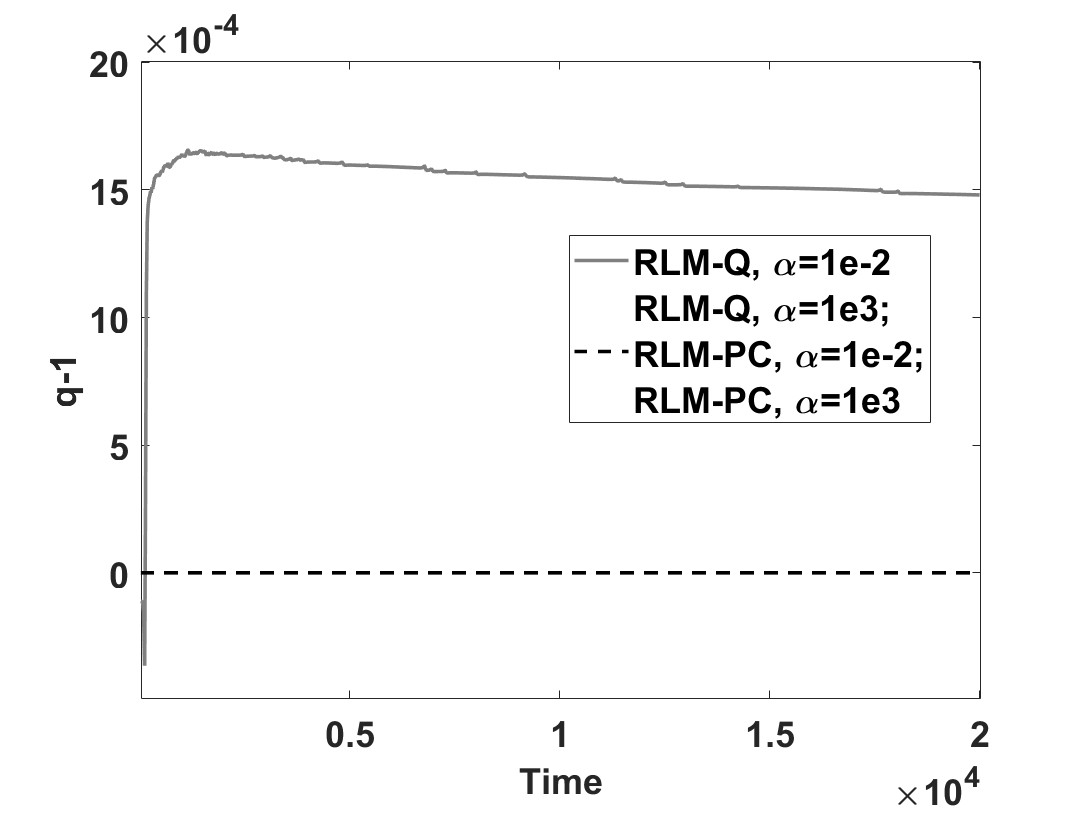}
\caption{Scaling factor}
\label{subfig:Energy-polymer-c}
\end{subfigure}
\caption{Time evolution of the original energies $E_{\text{RLM}}-\alpha (q^2-1)$ in (a) and (b),  and scaling factor $q$ for the Cahn--Hilliard equation with the Flory--Huggins potential, computed by the RLM-Q-CN and RLM-PC-CN schemes with $\alpha=10^{-2}$ and $10^{3}$.}
\label{fig:Energy-polymer}
\end{figure}

Next, we compare snapshots of the phase variable at $T=0$, $400$, $10000$, and $20000$ obtained by the SAV-CN and RLM-PC-CN schemes. For $\Delta t=10^{-2}$, the two methods yield visually indistinguishable patterns in Figure~\ref{fig:polymerCH}.

\begin{figure}[H]
\centering
\renewcommand{\arraystretch}{0.9}
\setlength{\tabcolsep}{2pt}
\begin{tabular}{c c c c c}
\toprule
{{\footnotesize Method}} & $T=0$ & $T=400$ & $T=10000$ & $T=20000$  \\
\midrule
{\footnotesize SAV} &
\snapshot{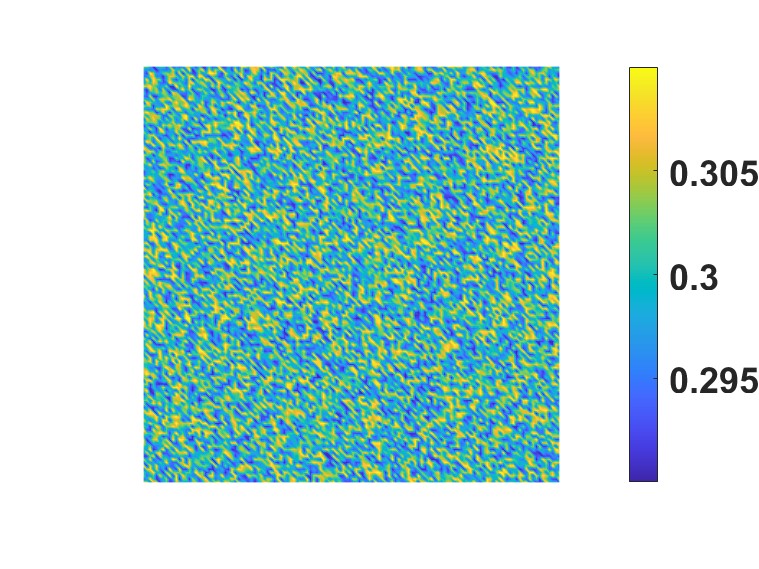} &
\snapshot{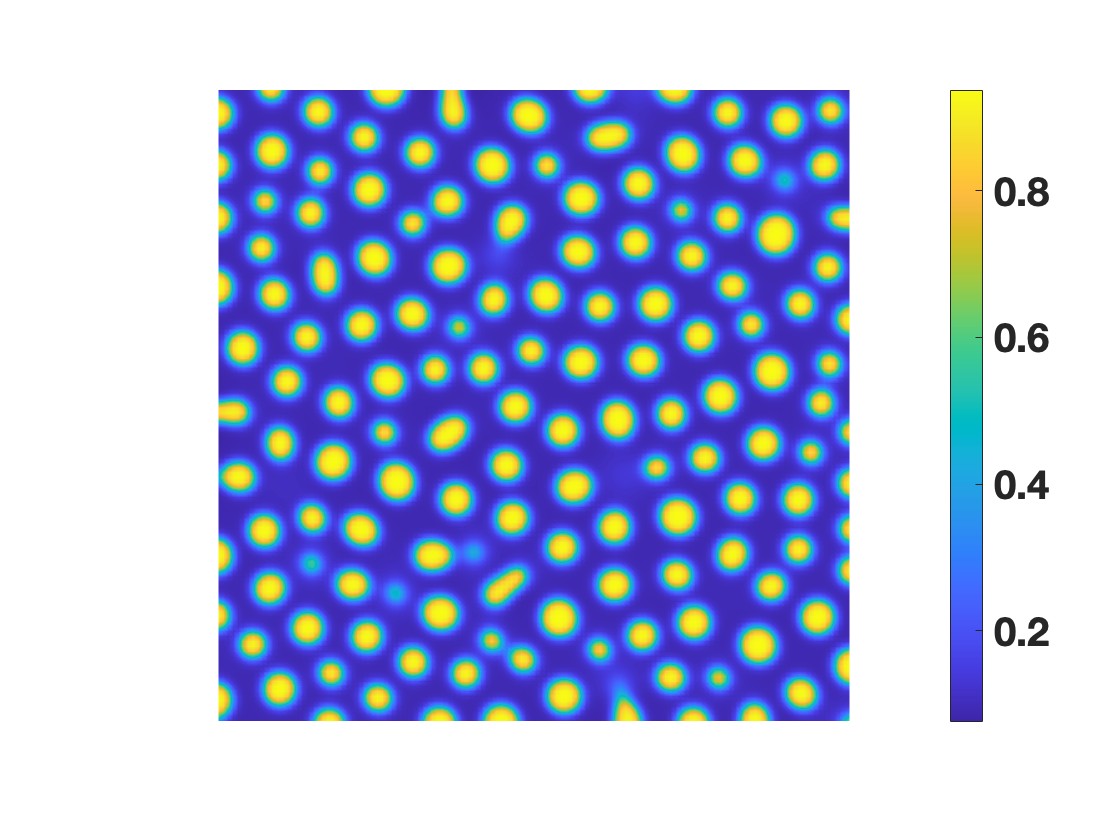} &
\snapshot{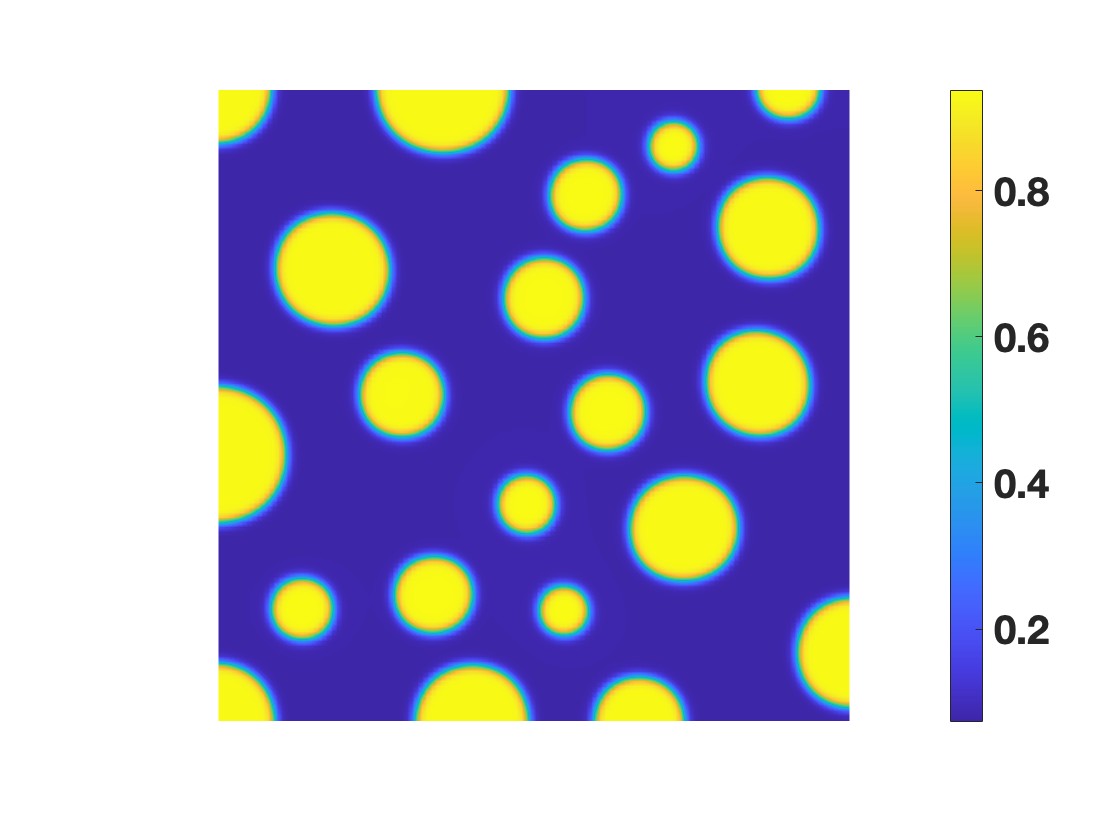} &
\snapshot{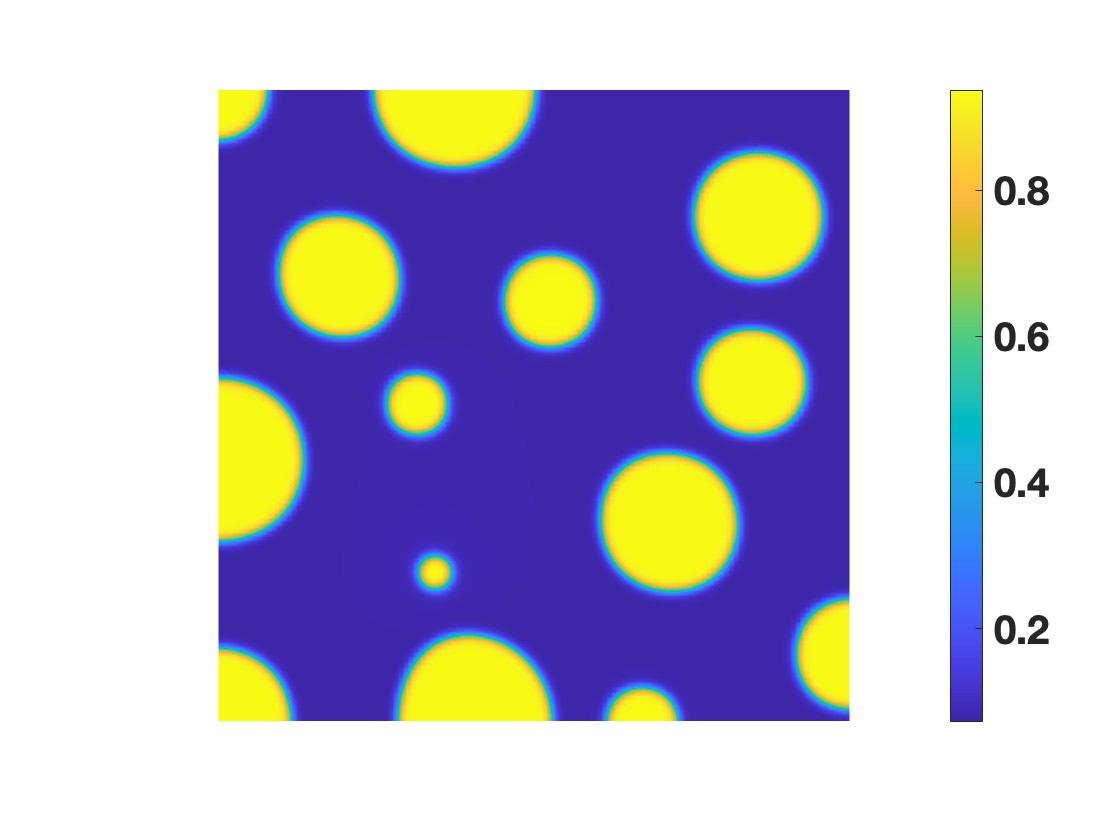} \\
\midrule
{\footnotesize RLM-PC} &
\snapshot{fig/snapshotspolymerSAVT=0.jpg} &
\snapshot{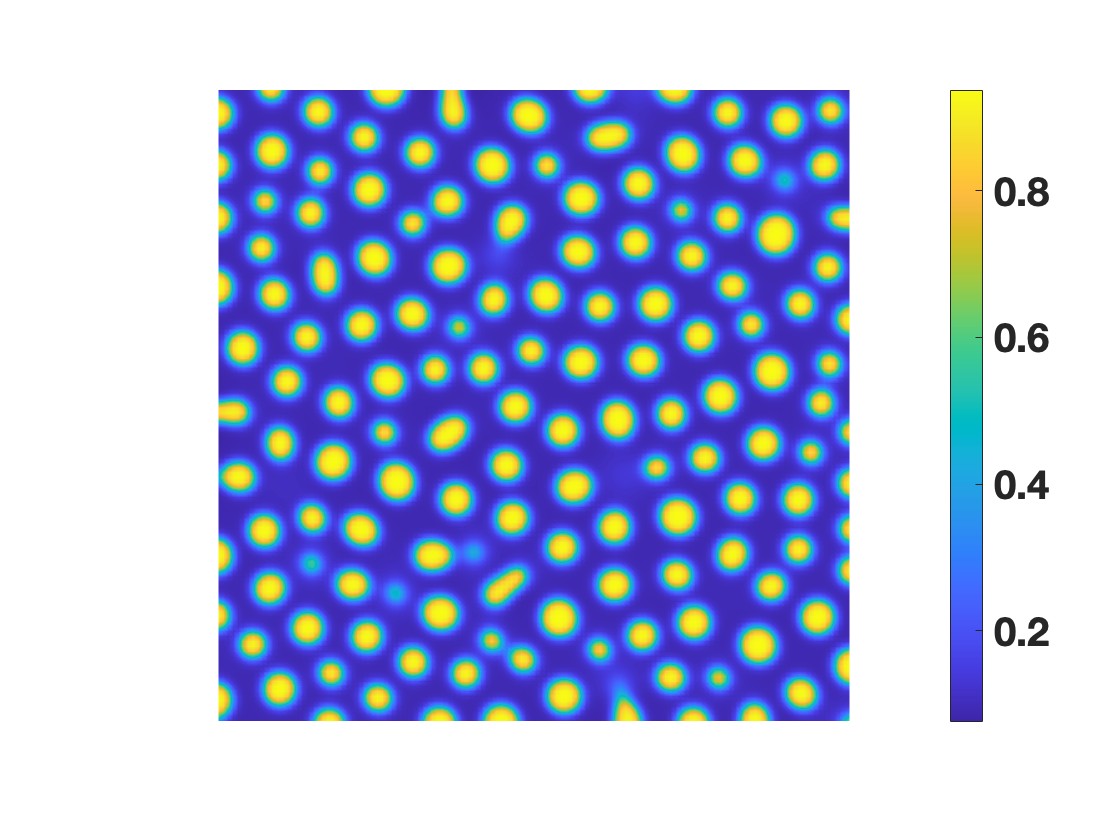} &
\snapshot{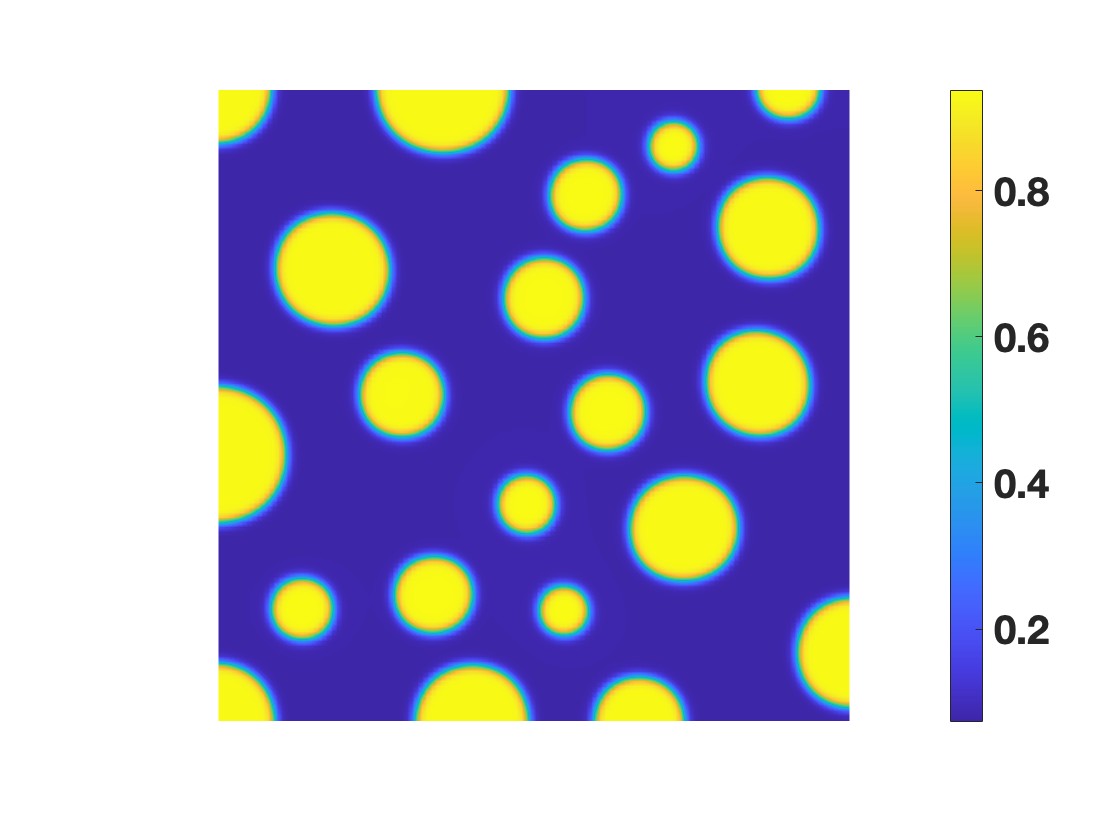} &
\snapshot{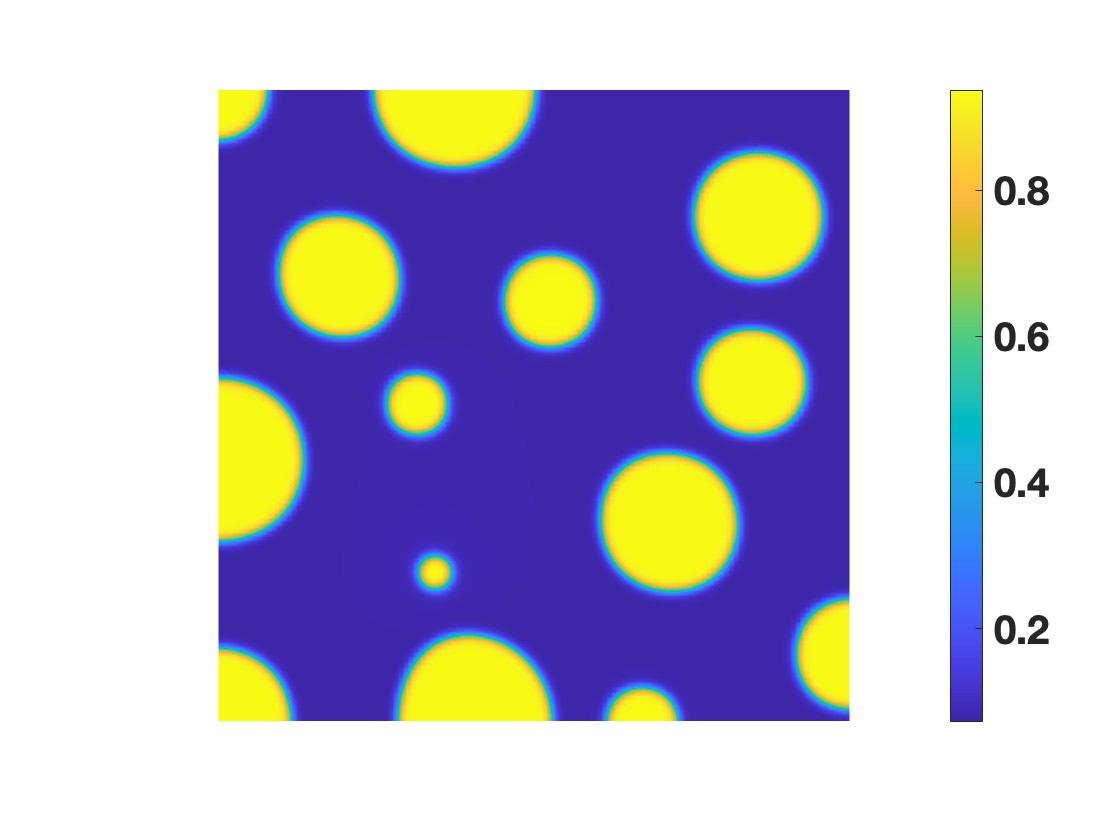} \\
\bottomrule
\end{tabular}
\caption{Snapshots of the phase variable $\phi$ for polymer solution at $T=0, 400, 10000, 20000$. Each row corresponds to a different scheme: SAV-CN (top) and RLM-PC-CN with $\alpha=10^{3}$ (bottom). Each column shows results at the same time point. }
\label{fig:polymerCH}
\end{figure}

Since the constant $\Csav$ in the SAV-CN scheme often affects numerical accuracy, we compare two values, $\Csav=10^2$ and $\Csav=10^8$, to assess their effects on the energy at different time step sizes. We first choose the results of SAV-CN with $\Csav=10^2$, $\Delta t=10^{-2}$ and RLM-CN with $\Delta t=10^{-2}$, $\alpha=10^3$ as the benchmarks, since the energy curves are identical in Figure~\ref{fig:Volumepolymer}(\subref{subfig:Volumepolymer-a}). When increasing $\Csav$ to $10^8$ and setting $\Delta t=5\times 10^{-2}$, the energy curve of SAV-CN deviates from the benchmarks in Figure~\ref{fig:Volumepolymer}(\subref{subfig:Volumepolymer-a}). Meanwhile, the RLM energy curve (simulated by RLM-Q-CN and RLM-PC-CN schemes) with $\Delta t=5\times 10^{-2}$ is identical to the benchmark. This demonstrates that the RLM approach is more robust than SAV. To check the volume-preserving property for both SAV-CN and RLM-CN, the time evolution of total volume with different parameter values is summarized in Figure~\ref{fig:Volumepolymer}(\subref{subfig:Volumepolymer-b}).

\begin{figure}[H]
\centering
\begin{subfigure}[b]{0.45\textwidth}
\centering
\includegraphics[width=\textwidth]{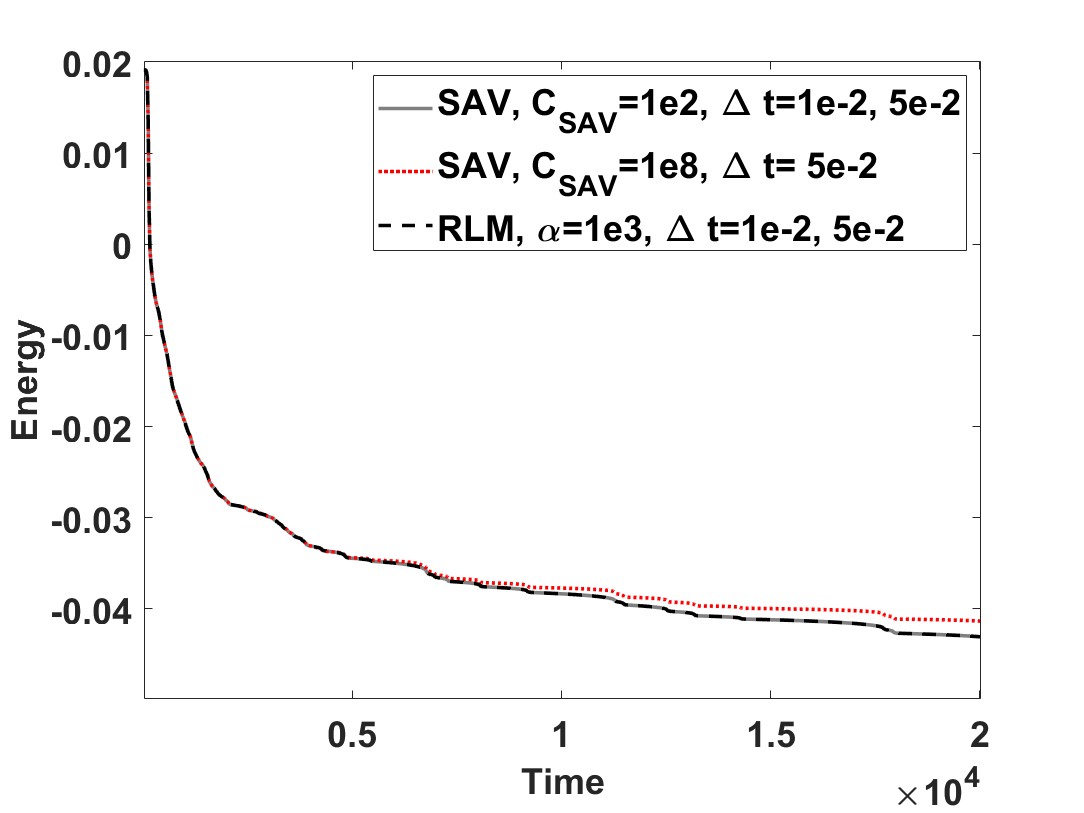}
\caption{Energy}
\label{subfig:Volumepolymer-a}
\end{subfigure}
\begin{subfigure}[b]{0.45\textwidth}
\centering
\includegraphics[width=\textwidth]{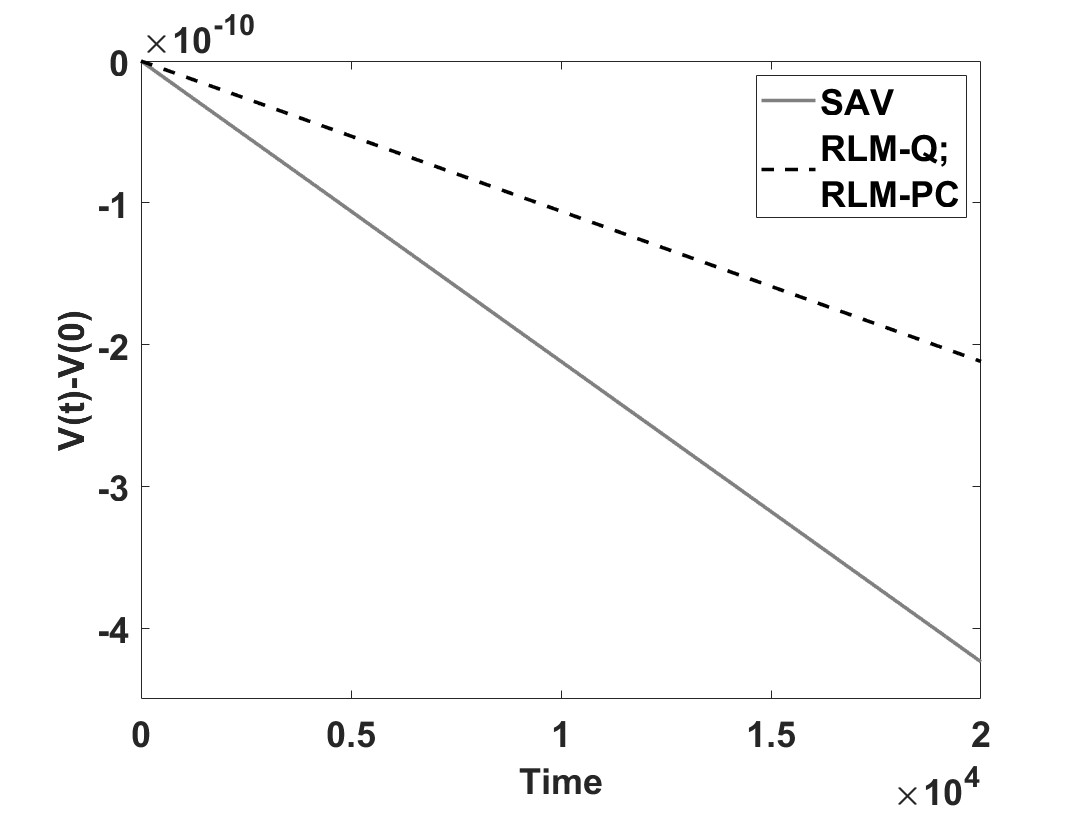}
\caption{Volume drift $V(t)-V(0)$}
\label{subfig:Volumepolymer-b}
\end{subfigure}
\caption{Time evolution of the energy ($E_{\text{SAV}}$ or $E_{\text{RLM}}-\alpha (q^2-1)$) and the volume drift $V(t)-V(0)$ for the Cahn--Hilliard equation with the Flory--Huggins potential, where $V(t)=\int_\Omega \phi(\bx,t)\,\diff\bx$. (\subref{subfig:Volumepolymer-a})~Energies of SAV-CN with $\Csav=10^{2}$ ($\Delta t=10^{-2}$ and $5\times10^{-2}$), SAV-CN with $\Csav=10^{8}$ ($\Delta t=5\times10^{-2}$), and RLM with $\alpha=10^{3}$ ($\Delta t=10^{-2}$ and $5\times10^{-2}$).  (\subref{subfig:Volumepolymer-b})~Volume drift for the SAV and RLM schemes.}
\label{fig:Volumepolymer}
\end{figure}

\subsection{CH-type Equation with Lennard--Jones-type Potential}\label{sec:CH-lennard-jones}
Finally, we consider a system whose bulk free energy is singular at $\phi=0$, which poses a unique challenge for the SAV method. In particular, we consider the liquid thin film coarsening system with the bulk free energy given by a Lennard--Jones-type potential
\begin{equation} \label{eq:free-energy-thin-film}
E(\phi) = \int_\Omega \left( \frac{\varepsilon^2}{2} |\nabla \phi|^2 + F(\phi) \right) \,\diff\bx, \quad F(\phi) = \frac{1}{3\phi^8}-\frac{4}{3\phi^2},
\end{equation}
where $\phi$ represents the film thickness and $\varepsilon$ is the interfacial parameter. The bulk free energy $F(\phi)$ is singular at $\phi=0$, with $F(\phi)\to+\infty$ as $\phi\to0^+$~\cite{zhang2021structure}. In this case, $f(\phi) = F'(\phi) = \frac{8}{3\phi^9}(\phi^6-1)$.

Based on the RLM reformulation in Section~\ref{sec:rlm-methods}, the Cahn--Hilliard-type equation with the mobility operator $M\Delta$ and the Lennard--Jones-type potential is reformulated as follows.
\begin{align}
& \partial_t \phi = M \Delta \mu, \label{eq:CH-lennard-jones-phi} \\
& \mu = -\varepsilon^2 \Delta \phi + q(t) \frac{8}{3\phi^9}(\phi^6-1), \label{eq:CH-lennard-jones-mu} \\
& \frac{d}{dt} \int_\Omega F(\phi) \,\diff\bx + \alpha \,\frac{d \big(q(t)\big)^2 }{dt}= \int_\Omega q(t) \frac{8}{3\phi^9}(\phi^6-1) \partial_t \phi \,\diff\bx, \label{eq:CH-lennard-jones-q}
\end{align}
where $M > 0$ is the mobility constant and $q(0)=1$. For the RLM-Q approach, we use the stabilization technique from~\eqref{eq:F-quadratize-stabilization}, setting $F^{n+1} =(\phi^{n+1})^2 + F(\phibar^{n+1}) - (\phibar^{n+1})^2$, i.e., taking $S=1$.

Since $F(\phi)$ and $f(\phi)=F'(\phi)$ blow up as $\phi\to0$, the SAV auxiliary variable and its associated square-root functional become ill-behaved near $\phi=0$, so the SAV method is inapplicable. Here, we assess the accuracy of the RLM schemes for this singular free energy system. In the first case, we set the computational domain as $\Omega=[0,1]^2$, the interface thickness parameter $\varepsilon=10^{-2}$, the time step size $\Delta t=10^{-2}$, the mobility coefficient $M=10^{-5}$, and the spatial step size $h=1/128$. The initial condition is given by
\begin{equation}
\phi_0(x,y) = 2+0.01\zeta,
\end{equation}
where $\zeta$ is a random variable uniformly distributed on $[-1,1]$. Convergence tests and computational efficiency comparisons are omitted for brevity. The energy and scaling factor evolutions are shown in Figure~\ref{fig:Energy-thinfilm}.

\begin{figure}[H]
\centering
\begin{subfigure}[b]{0.48\textwidth}
\centering
\includegraphics[width=\textwidth]{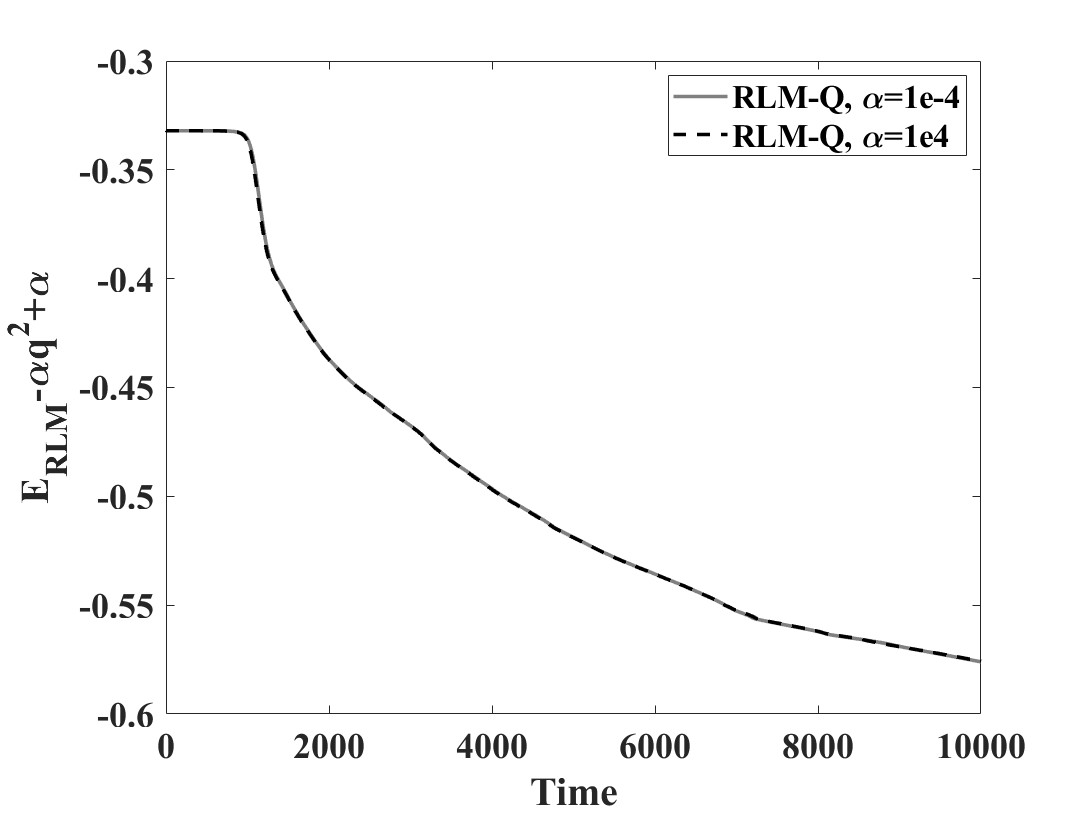}
\caption{RLM-Q}
\label{subfig:Energy-thinfilm-a}
\end{subfigure}
\begin{subfigure}[b]{0.48\textwidth}
\centering
\includegraphics[width=\textwidth]{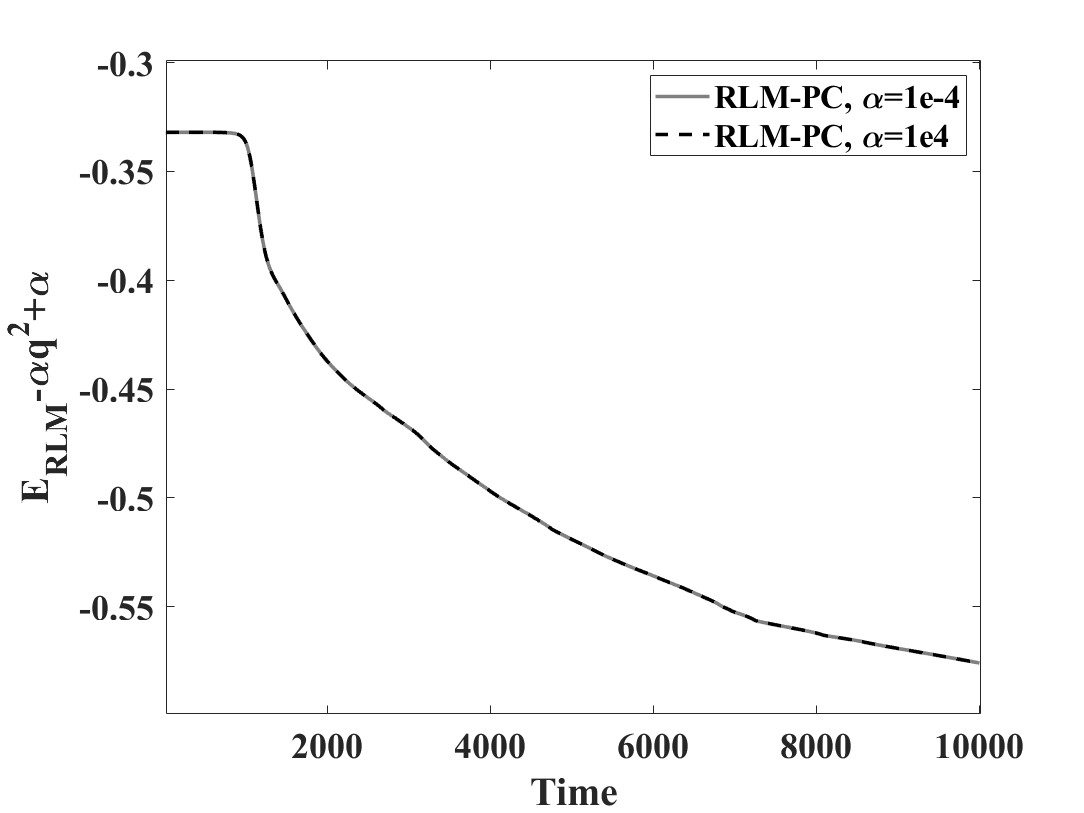}
\caption{RLM-PC}
\label{subfig:Energy-thinfilm-b}
\end{subfigure}
\begin{subfigure}[b]{0.48\textwidth}
\centering
\includegraphics[width=\textwidth]{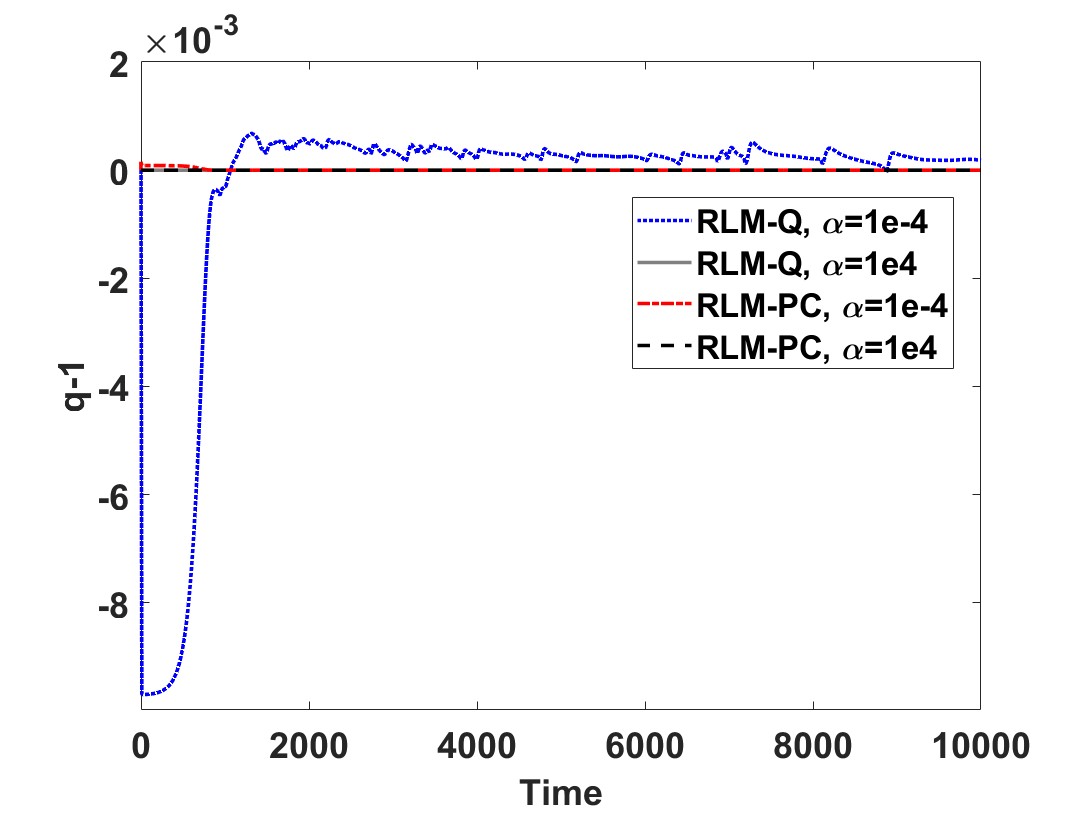}
\caption{Scaling factor}
\label{subfig:Energy-thinfilm-c}
\end{subfigure}
\begin{subfigure}[b]{0.48\textwidth}
\centering
\includegraphics[width=\textwidth]{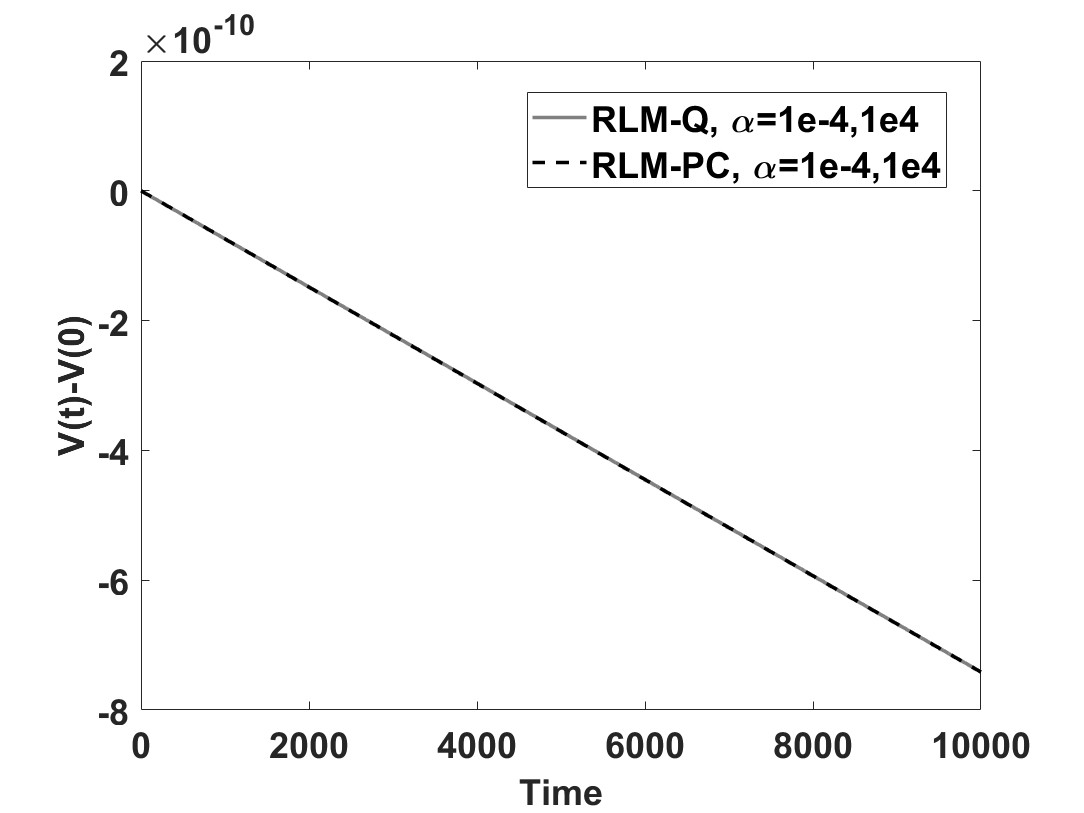}
\caption{Total volume}
\label{subfig:mass-thinfilm}
\end{subfigure}
\caption{Time evolution of the energy, scaling factor $q$, and total volume for the liquid thin-film model with the Lennard--Jones-type potential, computed by the RLM-Q-CN and RLM-PC-CN schemes with $\alpha=10^{-4}$ and $10^{4}$, where the total volume at $t$ is given by $V(t)=\int_\Omega \phi(\bx,t)\,\diff\bx$.}
\label{fig:Energy-thinfilm}
\end{figure}

In Figure~\ref{fig:Energy-thinfilm}(\subref{subfig:Energy-thinfilm-a}) and (\subref{subfig:Energy-thinfilm-b}), we show the time evolution of the original energy with various $\alpha$ using the RLM-Q-CN and RLM-PC-CN schemes. The two schemes yield coincident energy curves for $\alpha\in\{10^{-4},10^4\}$. In Figure~\ref{fig:Energy-thinfilm}(\subref{subfig:Energy-thinfilm-c}), we observe that $q$ approaches 1 with a sufficiently large $\alpha$, and RLM-PC-CN performs better than RLM-Q-CN. RLM-PC-CN is also computationally more efficient than RLM-Q-CN.  We also find that a larger $\alpha$ introduces a smaller error in $q$. In Figure~\ref{fig:Energy-thinfilm}(\subref{subfig:mass-thinfilm}), we find that the total thickness (which can be viewed as the total volume) is conserved in four cases.

Then, we compare snapshots of the phase variable from the simulations above. Figure~\ref{fig:snapshotofthinfilm} shows snapshots obtained by RLM-PC-CN with $\alpha=10^4$ at $T=0,200, 5000, 10000$ to illustrate the coarsening dynamics. Across the tested $\alpha$ values and RLM variants, the snapshots are visually indistinguishable.

\begin{figure}[H]
\centering
\renewcommand{\arraystretch}{0.9}
\setlength{\tabcolsep}{2pt}
\begin{tabular}{c c c c }
\toprule
$T=0$ & $T=200$ & $T=5000$ & $T=10000$  \\
\midrule
\snapshot{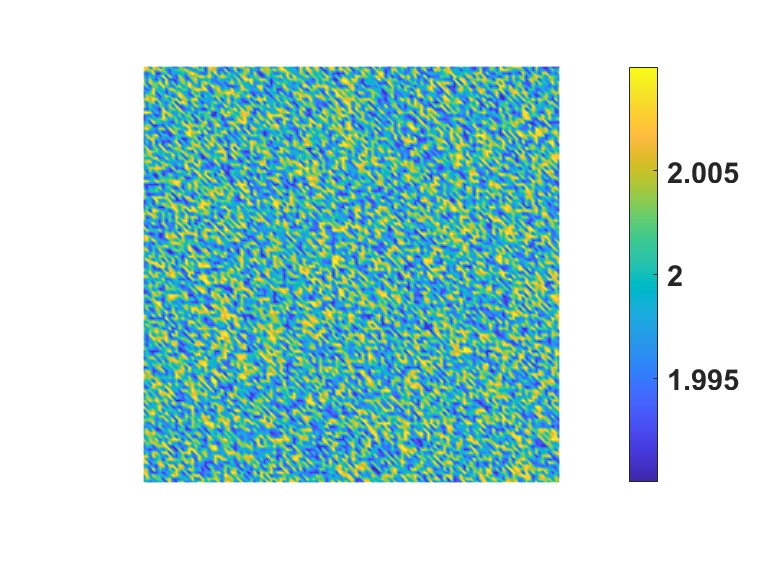} &
\snapshot{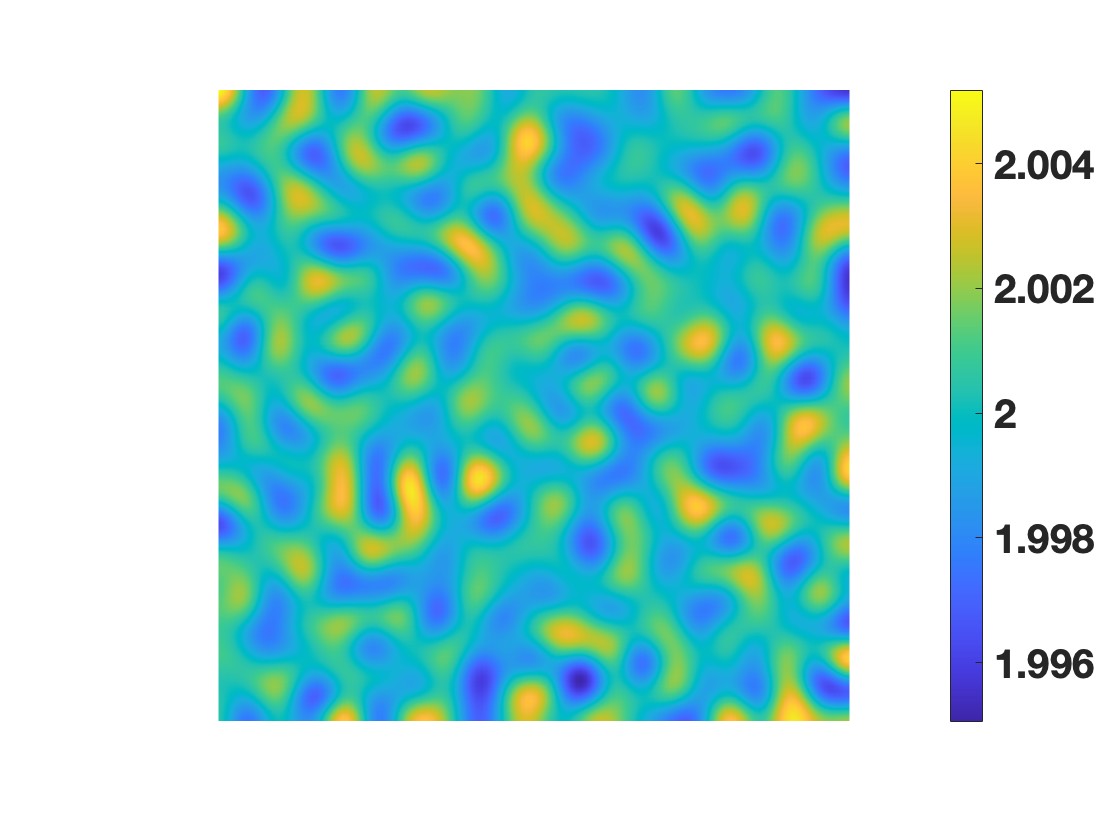} &
\snapshot{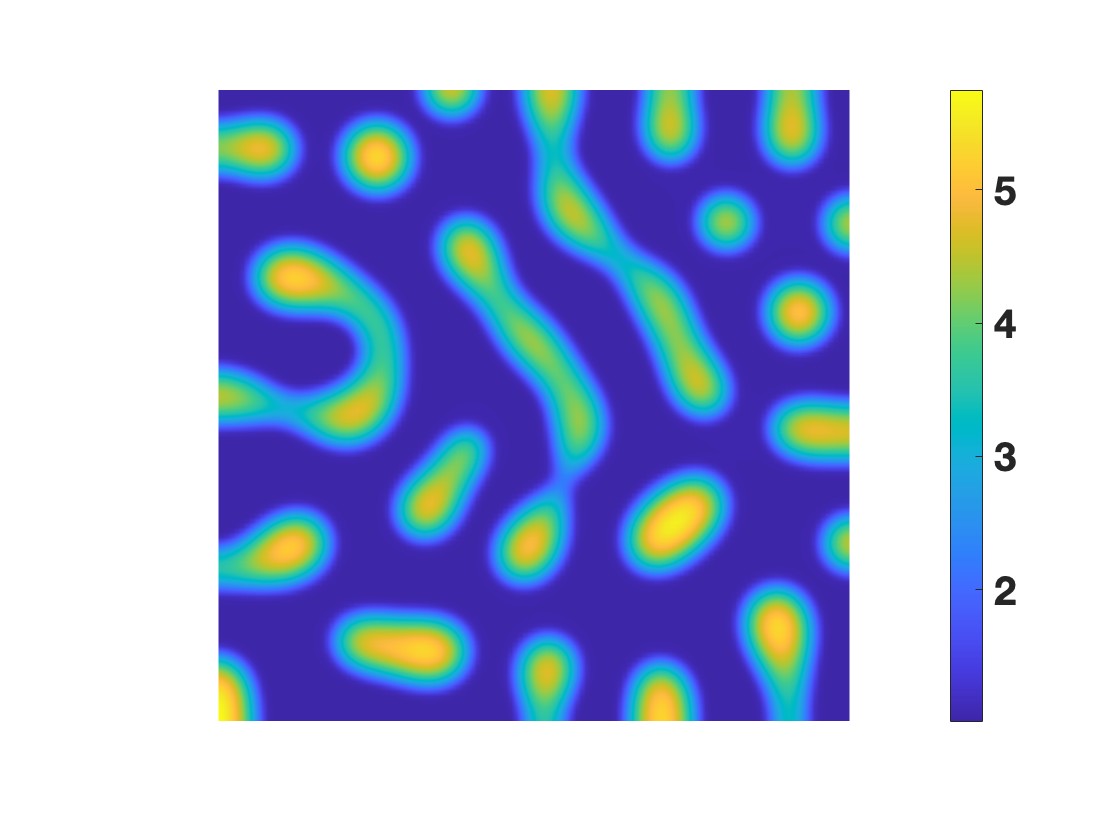} &
\snapshot{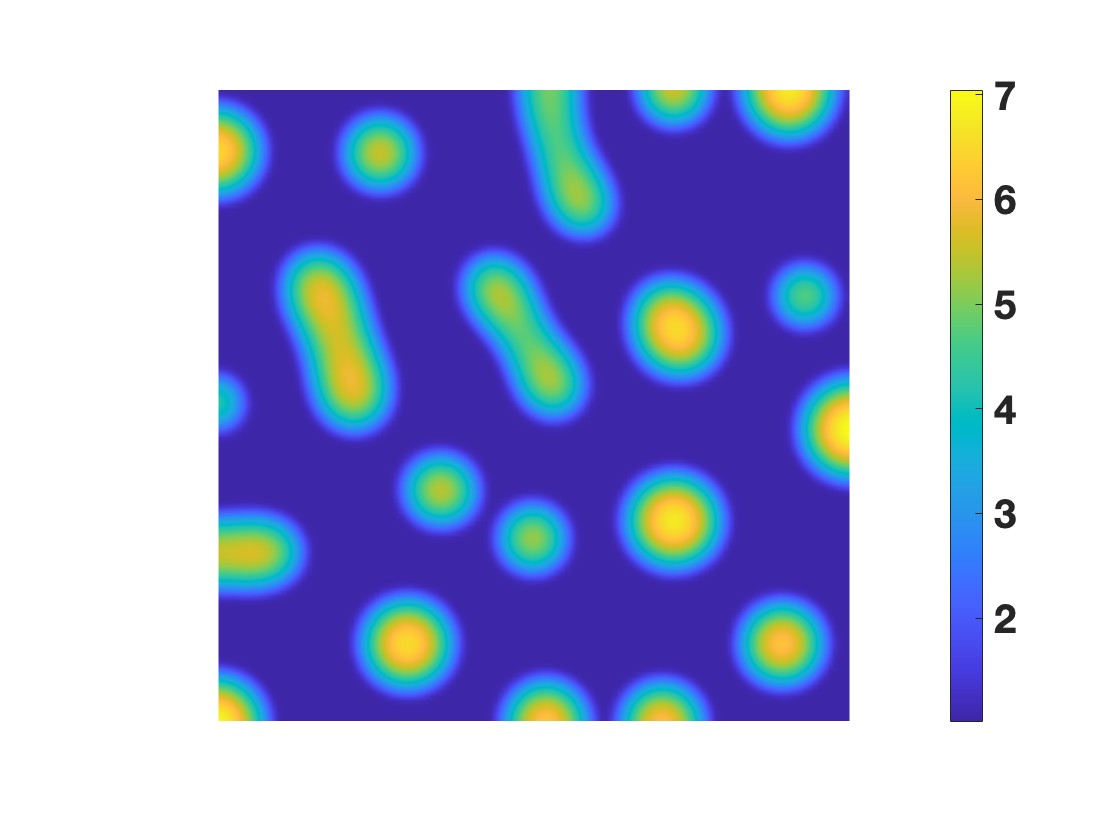}  \\
\bottomrule
\end{tabular}
\caption{Snapshots of the phase variable $\phi$ for the liquid thin film model at $T=0,200, 5000, 10000$, computed by the RLM-PC-CN scheme with $\alpha=10^{4}$.}
\label{fig:snapshotofthinfilm}
\end{figure}

\subsection{Conserved Allen--Cahn Equation with Double-well Potential in 3D}\label{sec:CAC}

Finally, we consider a three-dimensional binary system with a double-well potential. The governing equation is the conserved Allen--Cahn equation~\cite{jing2019second}
\begin{align}\label{eq:CAC-equation}
& \partial_t \phi = -M  (\mu+\tilde L),  \\
& \mu = -\varepsilon^2 \Delta \phi + \phi(\phi^2-1), 
\end{align}
with the energy
\begin{equation} \label{eq:CAC-energy}
E(\phi) = \int_\Omega \left( \frac{\varepsilon^2}{2} |\nabla \phi|^2 + F(\phi) \right) \,\diff\bx+\tilde L\left(\int_\Omega \phi  \,\diff\bx-V_0 \right), \quad F(\phi) = \frac{1}{4}(\phi^2-1)^2, 
\end{equation}
where $\phi$ is the volume fraction, $\varepsilon$ is the interfacial parameter, $\tilde L$ is a spatially constant Lagrange multiplier enforcing volume conservation, and $V_0=\int_\Omega \phi(\bx,0)\,\diff\bx$ is the initial total volume.  The Lagrange multiplier is derived as
$
\tilde L=-\frac{\int_\Omega \phi(\phi^2-1) \diff\bx}{\int_\Omega 1 \diff\bx}.
$
Here $\int_\Omega 1\,\diff\bx=|\Omega|$ denotes the volume of $\Omega$. 

Based on the RLM reformulation strategy in Section~\ref{sec:rlm-methods}, the conserved Allen--Cahn equation is reformulated as follows.
\begin{align}
& \partial_t \phi = -M  \tilde \mu, \\
& \tilde \mu = -\varepsilon^2 \Delta \phi + q(t) \Big( \phi(\phi^2-1)-\frac{\int_\Omega \phi(\phi^2-1) \diff\bx}{\int_\Omega 1 \diff\bx} \Big), \\
& \frac{d}{dt} \int_\Omega F(\phi) \,\diff\bx + \alpha \,\frac{d \big(q(t)\big)^2 }{dt}= \int_\Omega q(t) \Big( \phi(\phi^2-1)-\frac{\int_\Omega \phi(\phi^2-1) \diff\bx}{\int_\Omega 1 \diff\bx} \Big) \partial_t \phi \,\diff\bx.
\end{align}

In the first example, we set the computational domain as $\Omega=[0,1]^3$, the interface thickness parameter $\varepsilon=10^{-2}$, the mobility coefficient $M=1$, the spatial step size $h=1/128$, the relaxation parameter $\alpha=10^3$, and the final time $T_{final}=100$. The initial condition is given by
\begin{equation}
\phi_0(x,y,z) = 1- \frac{1}{2}\Big[1-\tanh\left(\frac{0.2-r_1}{\delta_0}\right)\Big]\Big[1-\tanh\left(\frac{0.1-r_2}{\delta_0}\right)\Big]
\end{equation}
where $r_1=\sqrt{(x-0.3)^2+(y-0.5)^2+(z-0.5)^2}$, $r_2=\sqrt{(x-0.7)^2+(y-0.5)^2+(z-0.5)^2}$, and $\delta_0=0.01$.
Convergence tests are omitted for brevity. This three-dimensional, nonlocal example serves as a numerical demonstration of the RLM framework: the discrete RLM-Q and RLM-PC updates are formed exactly as in Section~\ref{sec:rlm-methods}, now with the volume-projected chemical potential. The energy, volume, and scaling factor evolutions are shown in Figure~\ref{fig:3D}.

\begin{figure}[H]
\centering
\subfig{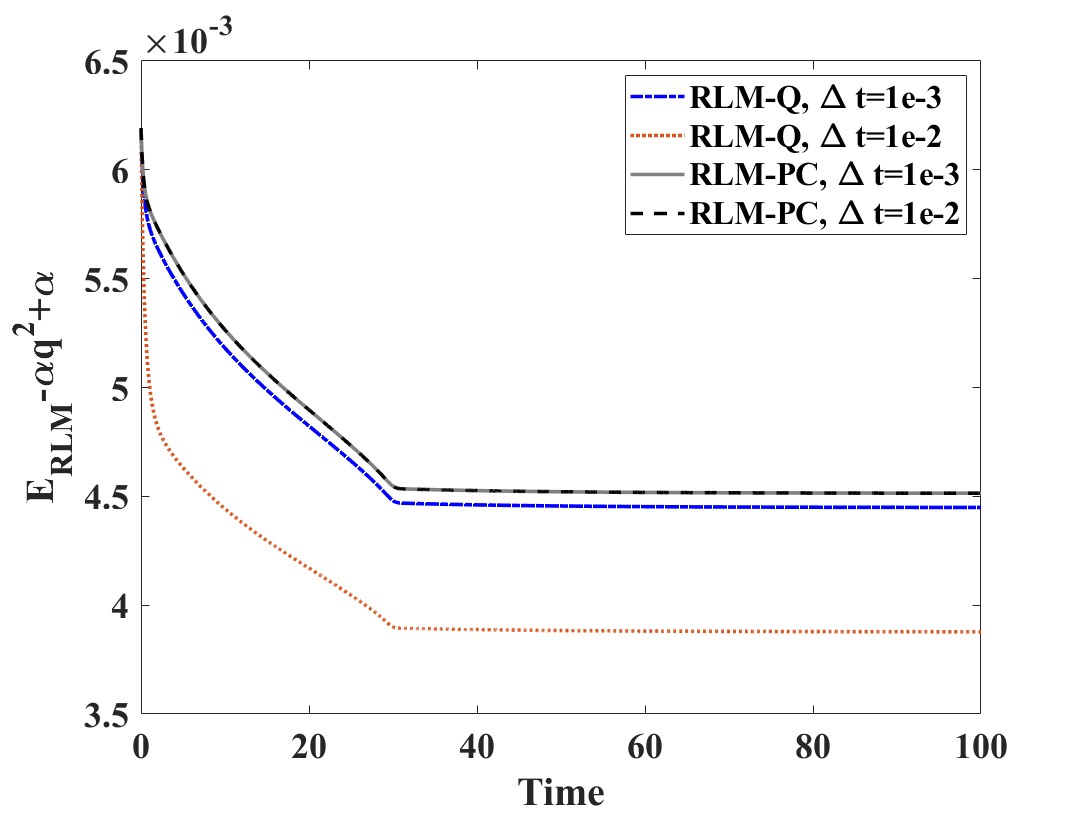}{Original energy}{subfig:3D-modified-energy}
\subfig{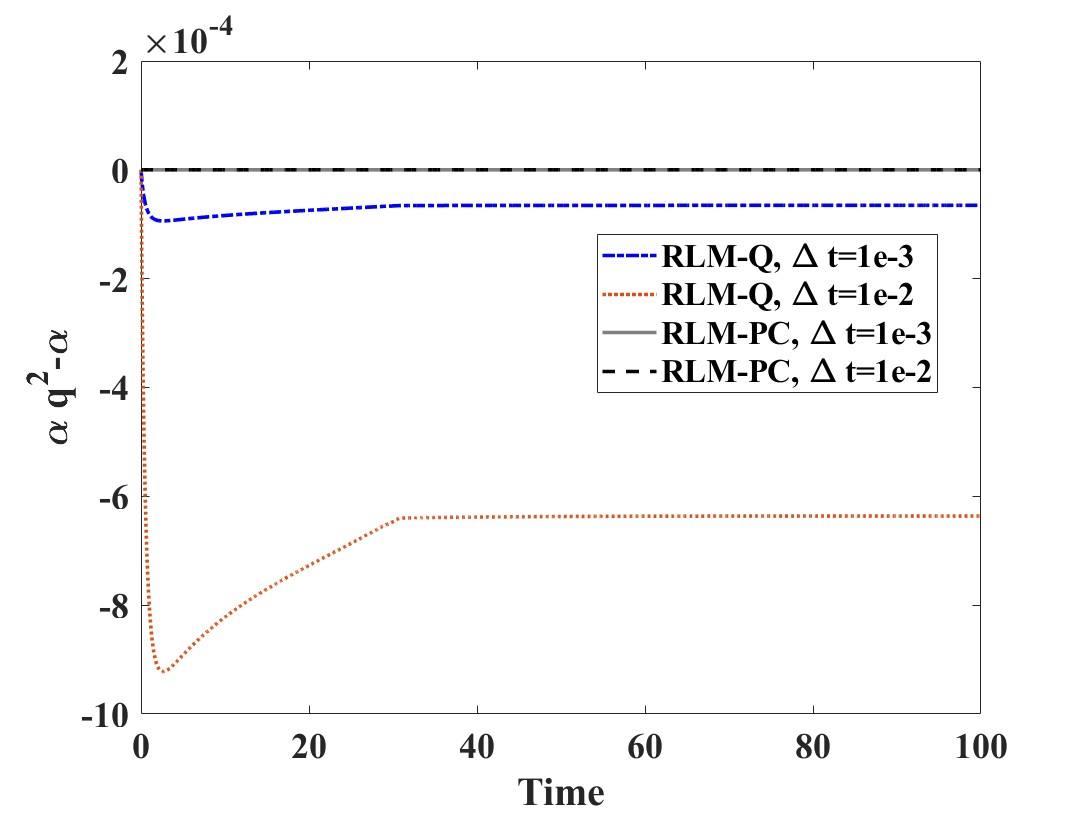}{$E_{\text{RLM}}-E$}{subfig:3D-original-energy}
\subfig{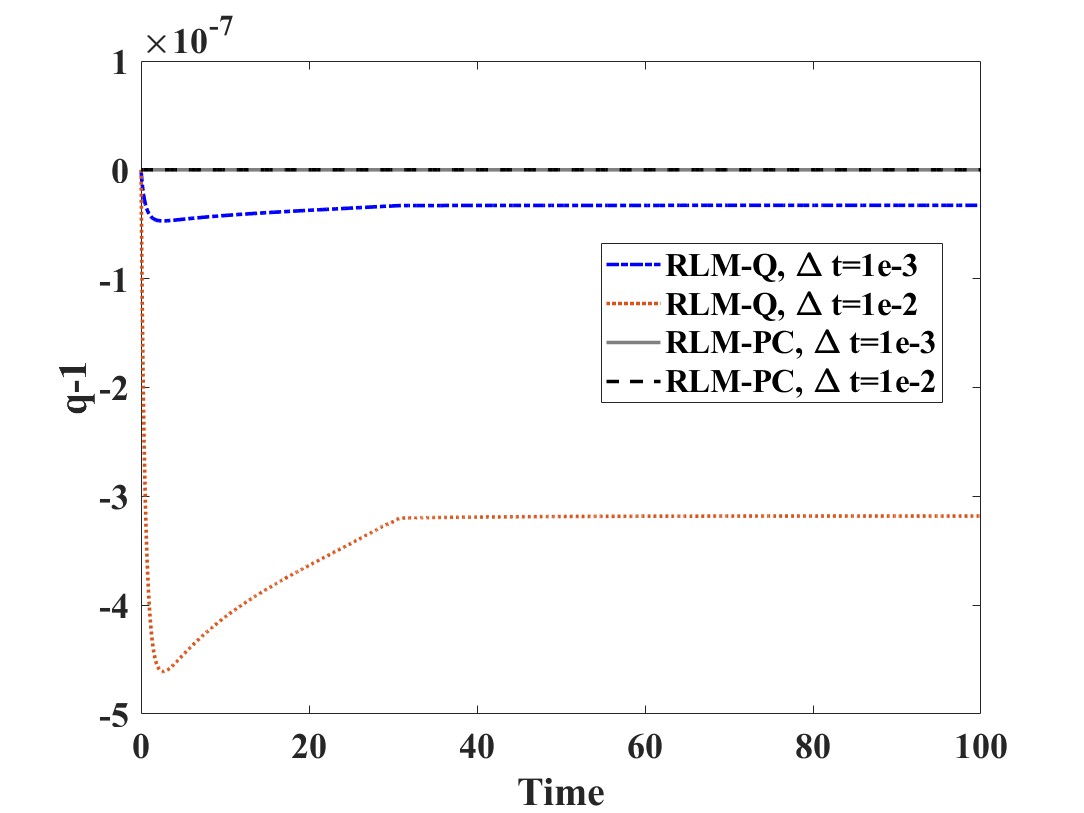}{Scaling factor $q$}{subfig:3D-q}
\subfig{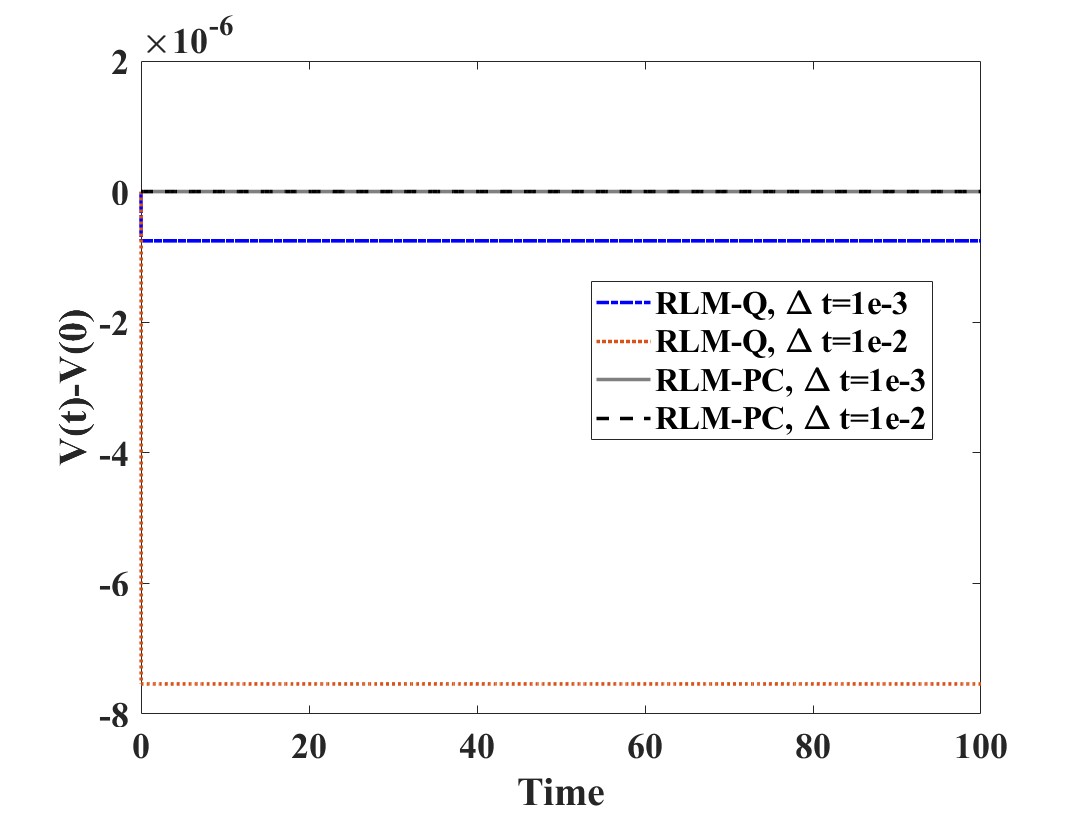}{Total volume}{subfig:3D-volume}
\caption{Time evolution of the energy, scaling factor $q$, and total volume for the conserved Allen--Cahn equation in 3D with $\Delta t=10^{-2}$ and $10^{-3}$. The curves for RLM-PC with different time step sizes coincide. (\subref{subfig:3D-modified-energy})~Original energy. (\subref{subfig:3D-original-energy})~Second class of energy differences. (\subref{subfig:3D-q}) and (\subref{subfig:3D-volume}) show the time evolutions of $q$ and the total volume  for the two RLM methods, where the total volume at $t$ is given by $V(t)=\int_\Omega \phi(\bx,t)d\bx$.}
\label{fig:3D}
\end{figure}
In Figure~\ref{fig:3D}(\subref{subfig:3D-modified-energy}) and (\subref{subfig:3D-original-energy}), we show the time evolution of the original energy $E_{\text{RLM}}-\alpha (q^2-1)$ and the difference of $E_{\text{RLM}}-E$ with various time step sizes by using the RLM-Q-CN and RLM-PC-CN schemes. The results show that RLM-PC-CN performs much better than RLM-Q-CN. In Figure~\ref{fig:3D}(\subref{subfig:3D-q}), we observe that $q$ approaches 1 for $\alpha=1\times 10^3$, and RLM-PC-CN performs better than RLM-Q-CN with the same time step size. In Figure~\ref{fig:3D}(\subref{subfig:3D-volume}), the total volume is conserved in all tests. We also find that a smaller $\Delta t$ introduces a smaller error in both $q$ and $V(t)$.

Next, we compare snapshots of the phase variable from the simulations above. Figure~\ref{fig:snapshotof3D} shows snapshots obtained by RLM-PC-CN with $\alpha=10^3$ at $T=0, 25, 40, 100$ to illustrate the coarsening dynamics. For all tested RLM variants, the snapshots appear visually identical.

\begin{figure}[H]
\centering
\renewcommand{\arraystretch}{0.9}
\setlength{\tabcolsep}{2pt}
\begin{tabular}{c c c c}
\toprule
$T=0$ &$T=25$ & $T=40$ & $T=100$  \\
\midrule
\snapshotthreeD{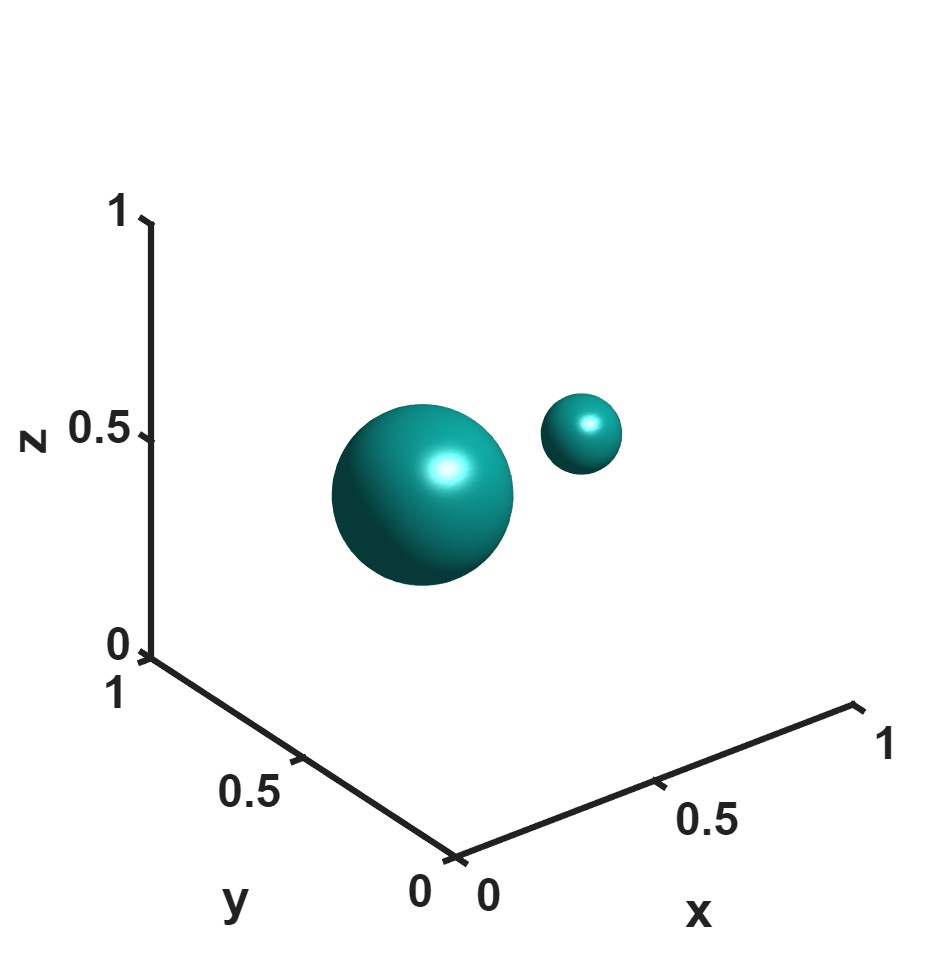} &
\snapshotthreeD{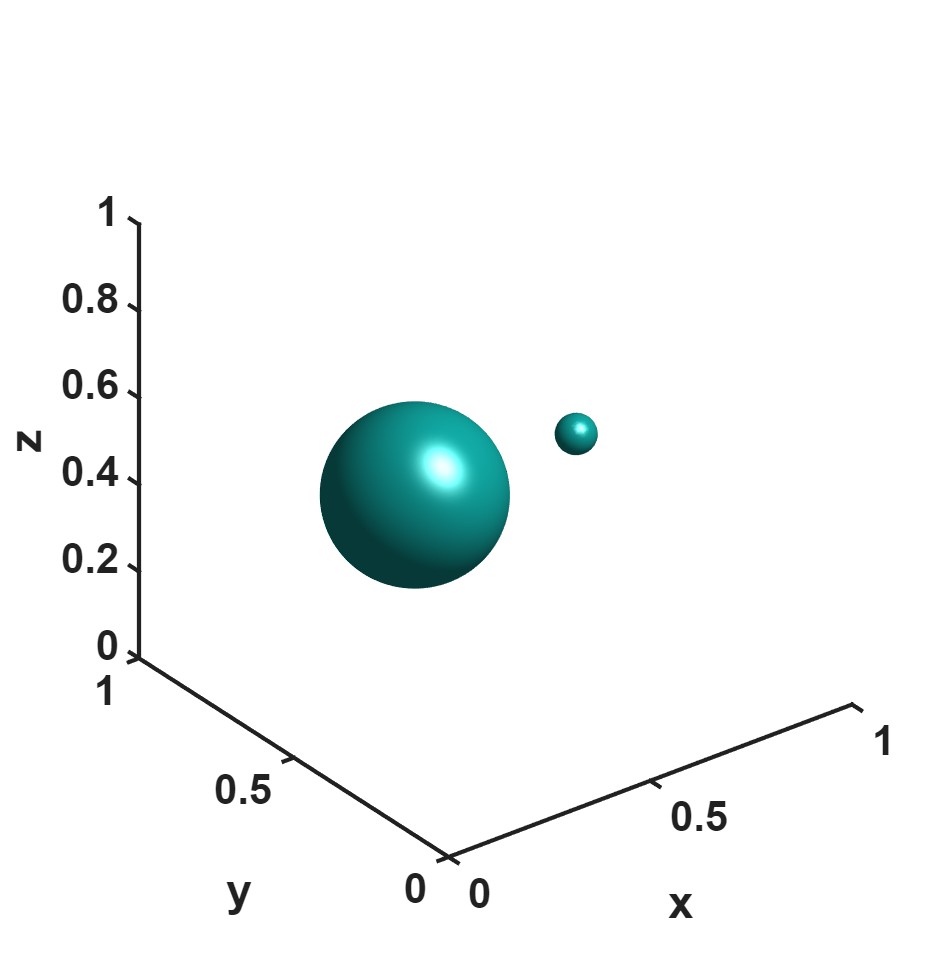} &
\snapshotthreeD{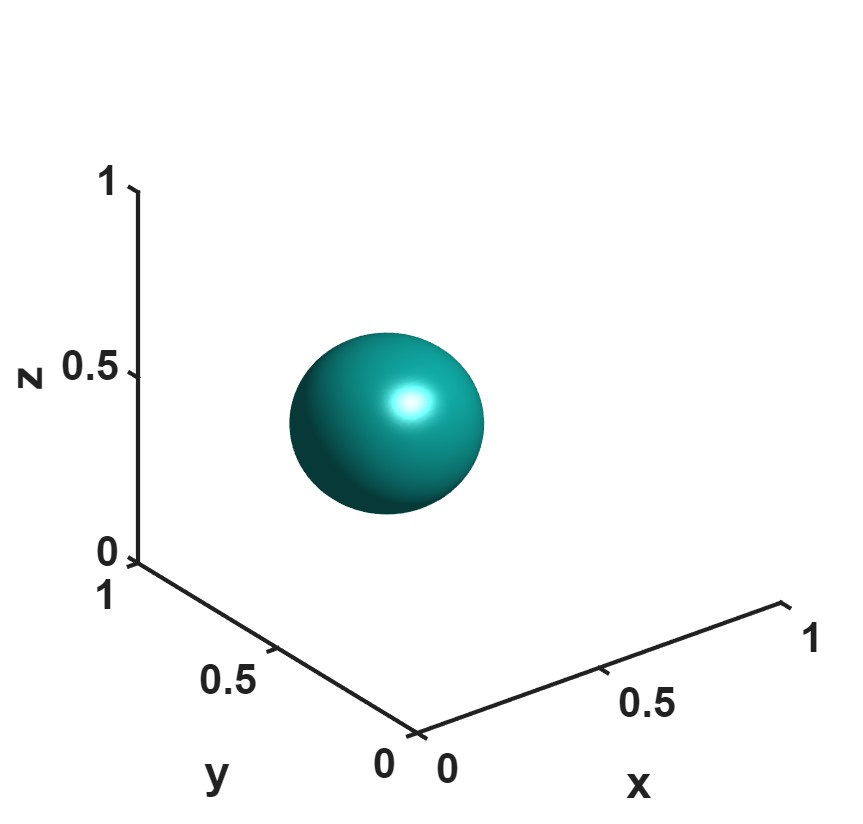} &
\snapshotthreeD{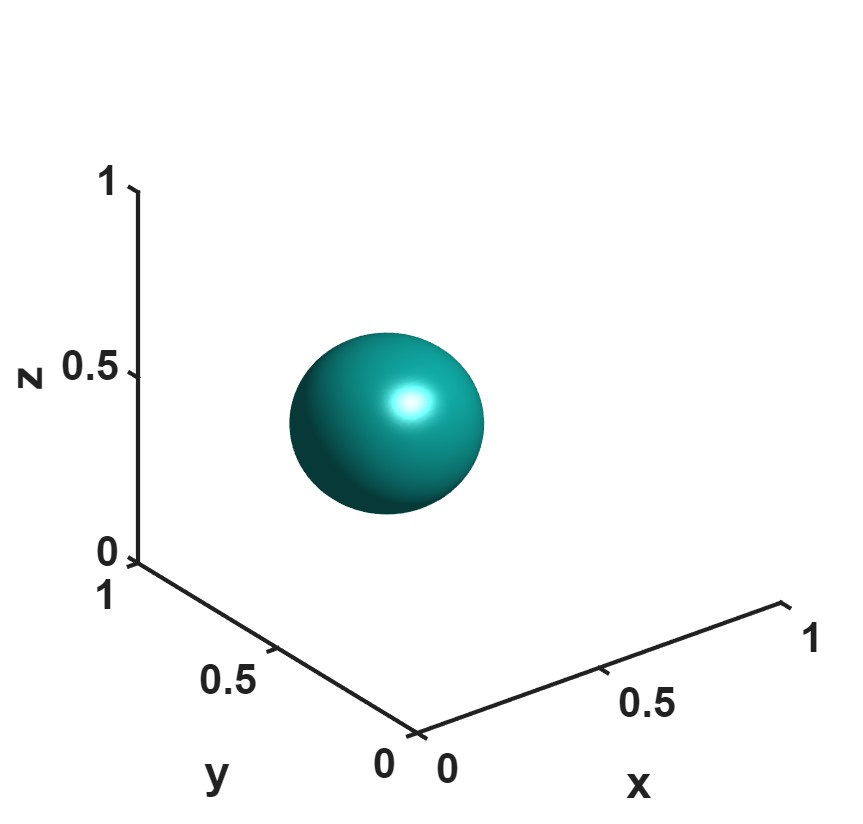}  \\
\bottomrule
\end{tabular}
\caption{Snapshots of the phase variable $\phi$ for the conserved Allen--Cahn equation in 3D at $T=0, 25, 40, 100$, computed by the RLM-PC-CN scheme with $\alpha=10^{3}$.}
\label{fig:snapshotof3D}
\end{figure}
\section{Conclusion}\label{sec:conclusion}

In this paper, we proposed a class of relaxed Lagrange multiplier (RLM) schemes for phase-field models that dissipate a relaxed original energy $\tilde{E}=E+\alpha (q^2-1)$ while tracking the original energy dissipation rate when $q\approx 1$, and remain linear and efficient. The central idea is to augment the classical Lagrange multiplier formulation with a relaxation term controlled by $\alpha\ge 0$, thereby converting the nonlinear scalar multiplier equation into a quadratic equation that admits a closed-form solution. We select the physically relevant root by continuity (i.e., the one closer to $q^n$).

We developed two practical variants. The RLM-Q approach uses a semi-implicit quadratization of the potential to obtain a quadratic equation for $q^{n+1}$, while the RLM-PC approach evaluates the potential at a predicted solution (obtained by setting $q=1$) and leads to an even simpler quadratic equation. The resulting schemes require only two linear solves with constant coefficients per time step, comparable to SAV-type methods, while tracking an energy much closer to the original free energy than the SAV or IEQ modified energies.
We also established unconditional energy stability and mass conservation for both first- and second-order RLM-Q variants. We provided explicit conditions on $\alpha$ under which the quadratic equation for the scaling factor is solvable and admits a unique positive root. The RLM-PC variant uses a predicted increment in the scalar equation. Numerical experiments confirm the expected temporal convergence orders and show that the RLM schemes accurately capture interface dynamics for polynomial, logarithmic (Flory--Huggins), and singular (Lennard--Jones-type) potentials in 2D or 3D. 

In future work, we plan to explore several directions. The first is higher-order time discretizations with rigorous stability and solvability analysis. The second is extensions to more complex phase-field models, such as phase-field crystal and multicomponent systems, as well as to coupled multiphysics problems. We also plan to develop adaptive, problem-dependent strategies for choosing $\alpha$ to balance accuracy and energy stability. Finally, we aim to derive rigorous error estimates, including quantitative bounds for $|q^n-1|$ in terms of $\alpha$ and $\Delta t$.

\section*{Acknowledgments}
{\sloppy
Xiaobo Jing's work is supported by the National Natural Science Foundation of China (No. 12147165) and the Jiangsu Provincial Scientific Research Center of Applied Mathematics (No.\ BK20233002). Additional support was provided by the Basic Research Program of Jiangsu (No.\ BK20252120) and the Start-up Research Fund of Southeast University (No.\ RF1028623369). Jia Zhao acknowledges support from the National Science Foundation under grant NSF-DMS-2513764.\par}

\end{document}